\definecolor{MyDarkBlue}{rgb}{0,0.1,0.7}
\theoremstyle{plain}
\newtheorem{theorem}{Theorem}[section]
\newtheorem{lemma}[theorem]{Lemma}
\newtheorem{definition}[theorem]{Definition}
\newtheorem{proposition}[theorem]{Proposition}
\newtheorem{remark}[theorem]{Remark}
\numberwithin{equation}{section}
\numberwithin{theorem}{section} 
\DeclareMathOperator*{\esssup}{ess\,sup}
\newcommand{\eqdef}{\overset{\mbox{\tiny{def}}}{=}}
\newcommand{\pv}{p}
\newcommand{\qv}{q}
\newcommand{\pZ}{\pv^0}
\newcommand{\qZ}{\qv^0}
\newcommand{\threed}{{{\mathbb R}^3}}
\newcommand{\ud}{\,\mathrm{d}}
\newcommand{\eps}{\varepsilon}
\newcommand{\R}{{\mathbb{R}}}
\newcommand{\PP}{{\mathcal{P}}}
\newcommand{\IN}{{\textbf{1}}}
\newcommand{\ProbJ}{\mathcal{H}}
\title[Uniqueness for the relativistic Landau Equation]{Uniqueness of Bounded Solutions for the Homogeneous Relativistic Landau Equation with Coulomb Interactions}
\author[R. M. Strain]{Robert M. Strain$^\dagger$}
\address{$^\dagger$Department of Mathematics, University of Pennsylvania, Philadelphia, PA 19104, USA.  \href{mailto:strain@math.upenn.edu}{strain@math.upenn.edu}}
\thanks{$^\dagger$R.M.S. was partially supported by the NSF grants DMS-1500916 and DMS-1764177.}
\author[Z. Wang]{Zhenfu Wang$^\ddagger$}
\address{$^\ddagger$Department of Mathematics, University of Pennsylvania, Philadelphia, PA 19104, USA.  \href{mailto:zwang423@math.upenn.edu}{zwang423@math.upenn.edu}}
\subjclass[2010]{Primary: 82D10, 35Q70, 35Q75, 35B45, 35A02, 35Q70, 35A02. }
\date{}
\dedicatory{Dedicated to Professor Walter Strauss on the occasion of his eightieth birthday}
\keywords{Relativistic Landau equation; Weak solutions; Stochastic representation; Uniqueness; Wasserstein distance.  }
\begin{document}

%\date{\today}
%\date{\today; \Red{(DRAFT)}}
% \date{May 8, 2019}
% \date{}

%\let\thefootnote\relax\footnotetext{2010 \textit{Mathematics Subject Classification.} Primary: 82D10, 35Q70, 35Q75, 35B45, 35A02, 35Q70, 35A02. \\ %; Secondary: 83A05.\\
%	\textit{Key words and phrases.}  Relativistic Landau equation; Weak solutions; Stochastic representation; Uniqueness; Wasserstein distance.  }
%\addtocounter{footnote}{-1}\let\thefootnote\svthefootnote

\begin{abstract} 
We prove the uniqueness of weak solutions to the spatially homogeneous special relativistic Landau equation under the conditional assumption that the solution satisfies $(\pZ)^7 F(t,p) \in L^1 ([0,T]; L^\infty)$.  The existence of standard weak solutions to the relativistic Landau equation has been shown recently in \cite{StrainTas}.
\end{abstract}

\setcounter{tocdepth}{1}
%\chapter is level 0
%\section is level 1
%\subsection is level 2
%\subsubsection is level 3
%\paragraph is level 4
%\subparagraph is level 5
\maketitle

%\bibliographystyle{amsplain}

%\textbf{Mathematics Subject Classification (2010).} Primary: 82D10, 35Q70, 35Q75, 35B45, 35A02, 35Q70, 35A02. \\ % Secondary:\\

%    \textbf{Keywords}: Relativistic Landau equation; Weak solutions; Stochastic representation; Uniqueness; Wasserstein distance.  

\tableofcontents 

\section{Introduction}

\thispagestyle{empty}

In this article we study the spatially homogeneous special relativistic Landau Equation with Coulomb interactions which is a basic model in Kinetic theory.   The Boltzmann equation is perhaps the most widely used partial differential equation in Kinetic theory.  However the Boltzmann equation does not make sense for the important Coulomb interactions \cite{MR1650006}. 
In 1936, Landau introduced a correction to the Boltzmann equation that is generally used to model a dilute hot plasma where fast moving particles interact via Coulomb interactions \cite{HintonArticle,MR684990}.  This partial differential equation, which is now called the Landau equation, does not include the effects of Einstein's theory of special relativity. However for  particle velocities that are close to the speed of light, which occurs commonly in a hot plasma,  relativistic effects are very important. The relativistic version of Landau's equation was derived by Budker and Beliaev in 1956 \cite{MR0083886,BelyaevBudker}. It is a fundamental model for studying the dynamics of a dilute collisional plasma.

The relativistic Landau equation is given by
\begin{equation}
\label{RelLandau}
\partial_t F =  \mathcal{C}(F, F).
\end{equation} 
Here $F=F(t, p)$ is the density and $p \in \mathbb{R}^3$ is the momentum variable and $t\ge 0$ is the time variable.  This equation includes the initial conditions $F(0,p) = F_0(p)$.  The collision operator can be written as
\begin{equation}\label{ColOper}
\mathcal{C}(h, g)(p) = \frac{1}{2}\sum_{i, j=1}^3 \partial_{p_i}  \int_{\mathbb{R}^3} \ud q  ~ \Phi^{ij}(p, q)  \left[h(q) \partial_{p_j} g(p) - g(p) \partial_{q_j} h(q) \right]. 
\end{equation}
The relativistic Landau kernel $(\Phi^{ij}(p, q))_{1 \leq i, j \leq 3}$  is then given by  
\begin{equation}\label{ColKer}
\Phi^{ij} (p, q)=  \Lambda(p, q) S^{ij}(p, q).
\end{equation}
For the momentum $p, q \in \mathbb{R}^3$,  we set the energies to be $p^0 = \sqrt{1 + |p|^2}$ and $q^0=   \sqrt{1+ |q|^2}$.  Then the relativistic relative momentum is defined by
\begin{equation}\label{rho.def.eq}
\rho  \eqdef  p^0 q^0 - p \cdot q -1 = \frac{|p-q|^2 + |p \times q|^2}{p^0 q^0 + p \cdot q +1}\geq 0.
\end{equation}
The proof of this identity  is straightforward since in particular one can use the formula 
$
|p  \times q|^2 + |p \cdot q|^2 = |p|^2 |q|^2. 
$
Then  $\tau = \rho +2$ and we have that 
\begin{equation}\label{lambda.def}
\Lambda(p, q)= \frac{(\rho +1)^2}{p^0 q^0} (\rho \tau )^{- \frac{3}{2}},
\end{equation}
and
\begin{equation}\label{Smatrix.def}
S^{ij}(p, q)= \rho \tau  \delta_{ij} - (p_i -q_i)(p_j -q_j) + \rho(p_i q_j + p_j q_i). 
\end{equation}
In this formulation we can directly observe that the matrix of the relativistic Landau kernel, $\Phi$, has a first order non-isotropic singularity because of \eqref{rho.def.eq}.

The main point of this article is to prove a conditional uniqueness result for large data weak solutions to the relativsitic Landau equation \eqref{RelLandau}.  This is stated in Theorem \ref{MainTheorem} below.  To prove this theorem we introduce several new decompositions and perform challenging pointwise estimates for the relativistic Landau kernel \eqref{ColKer}; these estimates build upon  recent difficult algebraic estimates in \cite{StrainTas}.    We also introduce  a stochastic representation, in  \eqref{SDE1} and \eqref{SDE2}, of solutions to the relativsitic Landau equation \eqref{RelLandau} with the specific coefficient matrix $\Sigma$ that is introduced in Proposition \ref{CoeffBM}.  
The work of Tanaka \cite{MR512334} in 1978 used a stochastic approach and proved uniqueness for the Boltzmann equation without cutoff in the Maxwell molecules case. Further see \cite{MR791288}.      Our main results make use of the approach by Fournier and Gu\'erin in \cite{MR1970275, MR2398952,MR2502525, MR2718931}, which proved uniqueness by looking at the stochastic representation of the classical non-relativistic spatially  homogeneous Landau equation.  In particular our result can be seen as the special relativistic counterpart of the result in  Fournier \cite{MR2718931} which proved the uniqueness for bounded solutions of the non-relativistic spatially homogenenous Landau equation with Coulomb interactions.   We refer to \cite{StrainTas} for a recent comparison of the relativistic and non-relativistic Landau equations.

The major new difficulties in the proof our main Theorem \ref{MainTheorem} are largely algebraic, due to the extreme complexity of the structure of the relativistic Landau kernel.  In this paper we introduce the stochastic coefficient matrix \eqref{sigmaM.def} $\Sigma$ used in the stochastic differential equations \eqref{SDE1} and \eqref{SDE2}, and we further prove several detailed pointwise estimates of $\Sigma$ and other quantities in order to establish the uniqueness theorem.  

It is well known that relativistic Kinetic theory contains many extreme difficulties at the level of the algebraic structure of the collision kernels, due to the quantities that arise in special relativity.  We note that an extensive study of the pointwise behavior of the collision operators in relativistic Kinetic theory was done by Glassey and Strauss between 1991-1995 in their work on the relativistic Boltzmann equation; see \cite{MR1321370,MR1211782,MR1392991,MR1275397,MR1105532}.  In particular \cite{MR1105532} gives an understanding of the very complex Jacobian of the pre-post change of variables for the relativistic Boltzmann equation.  Then \cite{MR1211782} proves the global asymptotic stability and uniqueness of the relativistic Boltzmann equation in the Torus.  Afterwards \cite{MR1321370} generalized the previous result to the whole space case.  The first author is very grateful for having had the opportunity to discuss the papers \cite{MR1321370,MR1211782,MR1392991,MR1275397,MR1105532} with Walter Strauss on several occasions while he was a graduate student at Brown University.

\subsection{The literature}
In this section we will describe a selection of closely related additional results about the relativistic Landau equation.  Lemou  \cite{MR1773932} in 2000 studied the linearized relativistic Landau collision operator.  Strain and Guo, in 2004 \cite{MR2100057}, proved the global existence of unique classical solutions to the relativistic Landau-Maxwell system with initial data that is close to the relativistic Maxwellian equilibrium.  Then Hsiao and Yu in 2006 \cite{MR2289548}  proved the existence of global classical solutions to the initial value problem for the simpler relativistic {L}andau equation with nearby relativistic Maxwellian initial data in the whole space.    Yu \cite{MR2514726} in 2009 proved the $C^\infty$ smoothing effects for the relativistic Landau-Maxwell system with nearby equilibrium initial data under the assumption that the electric and magnetic fields are infinitely smooth.  Further for relativistic Landau-Poisson equation the smoothing effects were shown in \cite{MR2514726} without additional assumptions.  In 2010 Yang and Yu in \cite{MR2593052}, proved the Hypocoercivity of the relativistic Landau equations.  Then in 2012, Yang and Yu \cite{MR2921603} proved the existence and uniqueness of global in time classical solutions to the relativistic Landau-Maxwell system in the whole space $\mathbb{R}^3_x$ for initial data which is nearby to the relativistic Maxwellian.

The non-relativistic Landau equation has experienced a much larger amount of mathematical study in comparison.      We will mention only a small sample of results that are closely related to this paper.  In 1977 \cite{MR0470442} proved the existence of a local in time bounded solution.   In 2002 \cite{MR1946444} Guo proved the global existence and uniqueness of classical solutions to the spatially dependent Landau equation with nearby Maxwellian equilibrium initial data.  The large time decay rates were shown in \cite{MR2209761}.    Recent developments in \cite{MR3625186} give an understanding of the case with a mild velocity tail on the initial data.   Further \cite{MR3670754} performs a numerical study on the large time decay rate in terms of the $2/3$ law as in \cite{MR2366140}.   We further reference \cite{MR2904573,MR3101794}.

Now for the spatially homogeneous non-relativistic Landau equation, in \cite{MR1737547,MR1737548} Desvillettes and Villani proved the large data global well-posedness and smoothness of solutions for the Landau equation with hard potentials.  In 1998 in \cite{MR1650006} Villani proved the existence of weak H-solutions of the spatially homogeneous Landau equation with Coulomb potential.  Later in 2015  \cite{MR3369941} Desvillettes proved an Entropy dissipation estimate for the Landau equation, and used that estimate to conclude that the H-solutions are actually true weak solutions.   Further developments can be found in \cite{MR3557719,MR3614751}.  Also \cite{MR3158719} proved $L^p$ estimates for the Landau equation with soft potentials.  In \cite{MR3375485} apriori estimates for the Landau equation with soft potentials including the Coulomb case are proven.    

Recently \cite{MR3582250} proved upper bounds for certain parabolic equations,  including the spatially dependent non-relativistic {L}andau equation after conditionally assuming the local conservation laws to be bounded.  And \cite{HarnackLandau} proves a Harnack inequality for solutions to kinetic Fokker-Planck equations with rough coefficients and applies that to the spatially dependent Landau equation to obtain a $C^\alpha$ estimate, assuming that the local conservation laws are bounded.  In \cite{MR3599518} other estimates are proven for the homogeneous 
{L}andau equation with {C}oulomb potential.

\subsection{Notations}  In this section we will introduce several notations that will be used throughout the rest of the article.  Let $\mathcal{P}(\mathbb{R}^d)$ be the set of probability measures on $\mathbb{R}^d$ ($d\ge 1$) and $\mathcal{P}_r(\mathbb{R}^d)$ ($r \geq 1$) be the subset of $\mathcal{P}(\mathbb{R}^d)$ with finite $r-$th moments, i.e. 
$$
\mathcal{P}_r(\mathbb{R}^d) 
\eqdef 
\Big\{ f \in \mathcal{P}(\mathbb{R}^d) \Big \vert \int_{\mathbb{R}^d} |x|^r f(\ud x) < \infty \Big\}.    
$$
We introduce the Wasserstein  distance  on  $\mathbb{R}^3$  to compare  two weak solutions to the relativistic Landau equation \eqref{RelLandau} as in \eqref{weakform} below.  For two probability measures  $f, g \in \mathcal{P}_r(\mathbb{R}^3)$, their  $r-$Wasserstein distance $\mathcal{W}_r(f, g)$  is defined  as  
\begin{equation*}
\begin{split}
\mathcal{W}_r(f, g) & =\inf_{R \in\ProbJ(f, g)}\bigg(\int_{\mathbb{R}^{3}\times\mathbb{R}^{3}}|x-y|^r  R(dx, dy)\bigg)^{1/r} \\& =\inf_{X\sim f,\, Y\sim g }\bigg(\mathbb{E}[|X-Y|^r]\bigg)^{1/r},
\\
\end{split}
\end{equation*}
where the first infimum is taken over $R \in \ProbJ(f, g)$.  Here $\ProbJ(f, g)$ is the set of joint probability measures on ${\mathbb{R}^{3}}\times{\mathbb{R}^{3}}$ with marginals $f$ and $g$ respectively.  Further $X\sim f$ means that $X$ is an $\mathbb{R}^3$ valued random variable with law $f$, and $Y\sim g$ is similarly defined.  Then the infimum in the second inequality above is over all possible couplings $(X,Y)$ of random variables with $f$ and $g$ as  their marginal  laws respectively.   It is known that $(\mathcal{P}_2,\mathcal{W}_2)$ is a Polish space whose topology is a bit stronger than the weak topology.   It is further known that the infimum above is reached in the sense that for $f, g \in \mathcal{P}_2$, then there exists $R \in \ProbJ(f, g)$ and $X\sim f$, $Y\sim g$ such that  
$$
\mathcal{W}_2(f, g)  
=\bigg(\int_{\mathbb{R}^{3}\times\mathbb{R}^{3}}|x-y|^2  R(dx, dy)\bigg)^{1/2} 
=\bigg(\mathbb{E}[|X-Y|^2]\bigg)^{1/2}.
$$
See \cite{MR2459454} for a thorough introduction of the Wasserstein  distance.

In this article in particular we will use the $2-$Wasserstein  distance, which is  $\mathcal{W}_2(F_t,  \tilde F_t)$, to quantify the distance of two weak solutions \eqref{weakform} $(F_t)_{t \in [0, T]}$ and $( \tilde F_t)_{t \in [0, T]}$ to the relativistic Landau equation \eqref{RelLandau}.  In particular for any $s \in [0, T]$, choose  $R_s \in \ProbJ(F_s, \tilde F_s)$ to be the unique probability measure on $\mathbb{R}^3 \times \mathbb{R}^3$ with marginals $F_s$ and $\tilde F_s$ such that 
\begin{equation}\label{Wass2.def}
\mathcal{W}_2^2(F_s, \tilde F_s) = \int_{\mathbb{R}^3 \times \mathbb{R}^3}  |p- \tilde p|^2 R_s (\ud p, \ud \tilde p).  
\end{equation}
We will use this distance extensively throughout the paper.

We will now define the weighted Lebesgue spaces $L^r_s(\threed)$ (with $r \geq 1$ and $s\in \mathbb{R}$) as follows
$$
\| f\|_{L^r_s(\threed)} \eqdef
 \| \langle \cdot \rangle^s  f\|_{L^r(\mathbb{R}^3)}
=
\left(
\int_{\threed} dp ~  \langle p \rangle^{rs} | f(p)|^r 
\right)^{1/r},
$$
where
$
 \langle p \rangle \eqdef \left(1 + |p|^2 \right)^{1/2}
$
with the corresponding standard definition for $L^\infty_s(\threed)$.  
We use the definition $ L^r_s(\mathbb{R}^3) = \{f: \mathbb{R}^3 \to \mathbb{R}, \, \| f \|_{L^r_s(\mathbb{R}^3)} < +\infty \}$. 
We write $L^r_s$ when there is no risk of confusion about the domain.  Further we will denote $L^r_0 = L^r$ (when $s=0$ with $r \geq 1$) throughout the article.

We will now define the following useful functions:
\begin{definition}\label{DefPsi}Define the  increasing continuous function $\Psi: [0, \infty) \to [0, \infty)$ as 
\[
\Psi (x) = x \left( 1 - \IN_{ \{ 0\le x \leq 1 \} }  \log x \right) . 
\]
Define the  concave increasing continuous function $\Theta: [0, \infty) \to [0, \infty)$ as 
\[
\Theta(x) = \begin{cases}
x(1 - \log x) & \mbox{if}\ x \in [0, \frac{1}{2}]; \\
x \log 2 + \frac{1}{2} & \mbox{if}\ x \geq \frac{1}{2}. 
\end{cases}
\]
\end{definition}

Note that for any $x \geq 0$, $\Psi(x)/2 \leq \Theta(x) \leq 2 \Psi(x)$. Since $\Theta$ is concave, we conclude for any $f\ge 0$ from Jensen's inequality that
\begin{equation}
\label{Jensen}
\int \Psi \circ f  \ud \mu \leq 2 \int \Theta \circ f  \ud \mu \leq  2 \Theta \left( \int f  \ud \mu \right) \leq 4 \Psi \left( \int f \ud \mu \right),  
\end{equation}
where $\mu$ is any probability measure.

For two quantities $A$ and $B$, we will use the notation $A \lesssim B$ to mean that there exists a positive inessential constant $C>0$ such that $A \le C B$.  Then $A \approx B$ means that $A \lesssim B$ and $B \lesssim A$.  We will use the notation $C>0$ and also $c>0$ to denote positive inessential constants whose value may change from line to line.

\subsection{Weak solutions to the relativistic Landau equation \eqref{RelLandau}}

For a test function $\varphi \in C^2_b(\mathbb{R}^3)$, we can formally integrate by parts the relativistic Landau equation \eqref{RelLandau} with kernel \eqref{ColKer} to obtain the following weak formulation
\[
\begin{split}
\frac{\ud }{\ud t} &  \int \varphi(p) F(t, p) \ud p =  \frac{1}{2} \int_{\mathbb{R}^3 \times \mathbb{R}^3}  F(p) F(q) \left(\Phi^{ij}(p, q) \partial_{ij}^2 \varphi(p)  \right) \ud p \ud q \\
 + &  \frac{1}{2} \int_{\mathbb{R}^3 \times \mathbb{R}^3}  F(p) F(q) \partial_{p_i} \varphi(p) \left( \partial_{p_j} \Phi^{ij}(p, q) -  \partial_{q_j} \Phi^{ij}(p, q) \right) \ud p \ud q,
\end{split} 
\]
where we sum over $i$ and $j$. Now we introduce 
\begin{equation}
\label{LDef}
L \varphi (p, q) = \frac{1}{2} \sum_{i, j=1}^3 \Phi^{i j}(p, q) \partial_{ij}^2 \varphi(p)  + \sum_{i=1}^3  B_i(p, q) \partial_{p_i} \varphi(p),
\end{equation}
where 
\begin{equation}
\label{BiDef}
B_i(p, q)  = \frac{1}{2} \sum_{j=1}^3 \left( \partial_{p_j}  - \partial_{q_j} \right) \Phi^{ij}(p, q) =   \Lambda(p, q) (\rho +2) (q_i - p_i), 
\end{equation}
This formulation is analogous to the  weak formulation of the non-relativistic Landau equation given in \cite[Equation (5)]{MR2718931}.  We obtain the expression \eqref{BiDef} by using the following lemma

\begin{lemma} [Strain and  Guo  \cite{MR2100057}] \label{LemmaOfB} One has 
\[
\sum_{j=1}^3 \partial_{p_j} \Phi^{ij}(p, q) = 2 \Lambda(p, q) \left( (\rho +1) q_i - p_i) \right), 
\]
and 
\[
\sum_{j=1}^3 \partial_{q_j} \Phi^{ij}(p, q) = 2 \Lambda(p, q) \left( (\rho +1) p_i - q_i) \right), 
\]
\end{lemma}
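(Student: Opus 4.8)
The plan is to obtain both formulas by differentiating $\Phi^{ij} = \Lambda S^{ij}$ directly, using one structural cancellation to discard the most unpleasant term. First note that $\Phi^{ij}(p,q) = \Phi^{ij}(q,p)$: the factor $\Lambda$ from \eqref{lambda.def} is symmetric in $(p,q)$ because $\rho$, $\tau=\rho+2$ and $p^0q^0$ are, while each of the three pieces of $S^{ij}$ in \eqref{Smatrix.def} is invariant under $p\leftrightarrow q$. Consequently the second identity is obtained from the first by interchanging the roles of $p$ and $q$, so it suffices to prove the formula for $\sum_{j}\partial_{p_j}\Phi^{ij}$. Since $p^0=\sqrt{1+|p|^2}$ we have $\partial_{p_j}p^0 = p_j/p^0$, so differentiating \eqref{rho.def.eq} yields the basic relation
\begin{equation*}
\partial_{p_j}\rho \;=\; \frac{q^0}{p^0}\,p_j - q_j \;=\; \frac{q^0 p_j - p^0 q_j}{p^0},
\qquad \partial_{p_j}\tau = \partial_{p_j}\rho .
\end{equation*}
In all that follows I would freely use the substitutions $|p|^2=(p^0)^2-1$, $|q|^2=(q^0)^2-1$ and $p\cdot q=p^0q^0-\rho-1$, which are immediate from the definitions of the energies and of $\rho$.

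The key observation is the null identity
\begin{equation*}
\sum_{j=1}^{3} S^{ij}(p,q)\,\partial_{p_j}\rho \;=\; 0,
\end{equation*}
that is, the matrix $S^{ij}$ annihilates the relativistic relative-momentum direction $q^0 p - p^0 q$ --- this is the relativistic analogue of the classical null structure $a^{ij}(z)z_j=0$ of the non-relativistic Landau kernel. I would check it simply by inserting $\partial_{p_j}\rho = (q^0/p^0)p_j - q_j$ into \eqref{Smatrix.def}, expanding the three resulting sums $\sum_j(p_j-q_j)\partial_{p_j}\rho$, $\sum_j p_j\partial_{p_j}\rho$, $\sum_j q_j\partial_{p_j}\rho$ with the substitutions above, and verifying that the coefficients of $p_i$ and of $q_i$ both vanish. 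Now apply the product rule,
\begin{equation*}
\sum_{j=1}^{3}\partial_{p_j}\Phi^{ij} \;=\; \sum_{j=1}^{3}\big(\partial_{p_j}\Lambda\big)S^{ij} + \Lambda\sum_{j=1}^{3}\partial_{p_j}S^{ij},
\end{equation*}
and take the logarithmic derivative of \eqref{lambda.def}: this gives $\partial_{p_j}\Lambda = \Lambda\big(\alpha(\rho)\,\partial_{p_j}\rho - p_j/(p^0)^2\big)$ with the scalar $\alpha(\rho)=\tfrac{2}{\rho+1}-\tfrac{3}{2\rho}-\tfrac{3}{2\tau}$. When contracted against $S^{ij}$, the entire $\alpha(\rho)$ term disappears by the null identity --- which is exactly what removes the awkward rational function $\alpha(\rho)$ from the computation --- and we are left with
\begin{equation*}
\sum_{j=1}^{3}\partial_{p_j}\Phi^{ij} \;=\; \Lambda\left(\sum_{j=1}^{3}\partial_{p_j}S^{ij} \;-\; \frac{1}{(p^0)^2}\sum_{j=1}^{3}p_j S^{ij}\right).
\end{equation*}

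What remains is to evaluate the two bracketed sums. Differentiating \eqref{Smatrix.def} termwise, using $\partial_{p_j}(\rho\tau)=2(\rho+1)\partial_{p_j}\rho$ together with the expression for $\partial_{p_j}\rho$, one collects $\sum_j\partial_{p_j}S^{ij}$ into the form $c_1 p_i + c_2 q_i$; likewise $\sum_j p_j S^{ij}$ becomes $d_1 p_i + d_2 q_i$ once $|p|^2$, $|q|^2$ and $p\cdot q$ are rewritten in terms of $p^0,q^0,\rho$. Forming the combination $\sum_j\partial_{p_j}S^{ij}-(p^0)^{-2}\sum_j p_j S^{ij}$ then produces a wholesale cancellation down to $2\big((\rho+1)q_i-p_i\big)$, and multiplying by $\Lambda$ gives the first identity; the second follows by the $p\leftrightarrow q$ symmetry noted above. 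I expect the only real difficulty to be the algebraic bookkeeping in this last step --- tracking the $p_i$- and $q_i$-coefficients through the expansions and confirming the cancellation --- since the sole non-routine ingredient is the null identity $\sum_j S^{ij}\partial_{p_j}\rho=0$ used to throw away the $\alpha(\rho)$ contribution. (Alternatively one could simply cite \cite{MR2100057}, where this computation originates.)
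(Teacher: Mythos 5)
The paper does not actually prove this lemma: it simply refers the reader to \cite[Lemma 3]{MR2100057}. You have instead supplied a self-contained computational proof, which I have checked and which is correct. Your route is well organized around the one genuinely structural observation, the null identity $\sum_j S^{ij}\partial_{p_j}\rho = 0$ --- equivalently, that $q^0 p - p^0 q$ lies in the kernel of $S$. You propose to verify this by brute-force expansion, but you could save yourself the work by citing Proposition \ref{SpecOfPhi} of this paper directly: $S=\Pi_1-\Pi_2$ with $\Pi_1$ the orthogonal projector onto the plane orthogonal to $v_1=q^0p-p^0q$, and $\Pi_2=(p\times q)\otimes(p\times q)$, so $\Pi_1 v_1=0$ trivially and $\Pi_2 v_1=0$ since $p\times q\perp p$ and $p\times q\perp q$; hence $Sv_1=0$ and, as $\partial_{p_j}\rho=(q^0p_j-p^0q_j)/p^0$, the null identity follows at once. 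Carrying out the remaining bookkeeping with the substitutions $|p|^2=(p^0)^2-1$, $|q|^2=(q^0)^2-1$, $p\cdot q=p^0q^0-\rho-1$: one finds $\sum_j\partial_{p_j}S^{ij}=\bigl(\tfrac{q^0(\rho+1)}{p^0}-3\bigr)p_i+\bigl(3(\rho+1)-\tfrac{q^0}{p^0}\bigr)q_i$ and $\tfrac{1}{(p^0)^2}\sum_j p_j S^{ij}=\bigl(-1+\tfrac{q^0(\rho+1)}{p^0}\bigr)p_i+\bigl((\rho+1)-\tfrac{q^0}{p^0}\bigr)q_i$, whose difference is exactly $-2p_i+2(\rho+1)q_i$, confirming your claimed ``wholesale cancellation.'' The second identity then follows from the $p\leftrightarrow q$ symmetry of $\Phi^{ij}$, as you say. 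So your argument is complete and correct; the only difference from the paper is that the paper offers no proof at all.
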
 

The proof of Lemma \ref{LemmaOfB} can be found in \cite[Lemma 3]{MR2100057}.  The detailed calculations which show the derivation of the weak formulation of the relativistic Landau equation \eqref{RelLandau} are contained in \cite{StrainTas}.  We give the following definition of a weak solution:

\begin{definition}[Weak solutions to the relativistic Landau equation \eqref{RelLandau}]\label{weak.def} 
 We  call  $(F_t)_{t \in [0, T]}$ a weak solution to the relativistic Landau equation \eqref{RelLandau} with initial data $F_0 $ a probability measure,   provided that  
$$
(F_t)_{t \in [0, T]} \in L^\infty([0, T], \mathcal{P}_1) \cap L^1([0, T], L^\infty), 
$$
and 
for any $\varphi \in C_b^2(\mathbb{R}^3)$ and any $t \in [0, T]$, it holds that
\begin{equation}
\label{weakform}
\int_{\mathbb{R}^3} \varphi(p) F_t( p) \ud p = \int_{\mathbb{R}^3} \varphi(p) F_0(p) \ud p 
%\\
+ \int_0^t \int_{\mathbb{R}^3 \times \mathbb{R}^3} F_s( p) F_s( q) L \varphi(p, q) \ud p \ud q \ud s, 
\end{equation}
where $L$ is defined in \eqref{LDef}. 
\end{definition}

In the recent work \cite{StrainTas}, the entropy dissipation estimate was shown for weak solutions to the relativistic Landau equation.  From the Sobolev inequality, then the entropy dissipation estimate implies the gain of 
$\nabla \sqrt{f} \in L^1([0,T]; L^2(\mathbb{R}^3))$  for a weak solution. 
Then, with that estimate, the global existence of a standard weak solution was established.  After that the propagation of any high order polynomial moments was shown in the sense that 
\begin{align}\notag
M_k(f,T) = \esssup_{t\in[0,T]}  \int_{\threed} f(t,p) (1+|p|^2)^k dp \le  C<\infty,
\end{align}
holds as long as it holds initially.  We refer to \cite{StrainTas} for the complete details.

\subsection{Main Results} The majority of the work in this article will go towards establishing the following integral inequality in Proposition \ref{PropIntegral}.  After that we will use this integral inequality to establish the uniqueness in our main theorem.

\begin{proposition}[Main Integral Inequality] \label{PropIntegral} For any two weak solutions $(F_t)_{t \in [0, T]}$ and $(\tilde F_t)_{t \in [0, T]}$ to the relativistic Landau equation \eqref{RelLandau}, as in Definition \ref{weak.def}, there exists  a bounded function $\rho: [0, T]\mapsto [0, \infty)$, such that for any $t \in [0, T]$, 
\[
\mathcal{W}_2^2 (F_t, \tilde F_t) \leq \rho(t), \quad \rho(t) \leq \mathcal{W}_2^2(F_0, \tilde F_0) +  \int_0^t \widetilde C(F_s, \tilde F_s)  \Psi(\rho(s)) \ud s, 
\]
where $\Psi$ is from Definition \ref{DefPsi} and 
\begin{equation}\label{DefCFsTildeFs}
\widetilde C(F_s, \tilde F_s)  \eqdef  c \left(  \|  F_s\|_{L^\infty_7 \cap L^1_7} +  \|  \tilde  F_s\|_{L^\infty_7 \cap L^1_7}  +1\right).
\end{equation}

\end{proposition}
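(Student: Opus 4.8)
The plan is to adapt the probabilistic coupling method of Fournier and Gu\'erin \cite{MR1970275,MR2718931} to the relativistic kernel. The first step is to read \eqref{weakform} as the weak (Fokker--Planck) formulation of a nonlinear, time-inhomogeneous diffusion: if $(F_t)$ is a weak solution as in Definition \ref{weak.def}, then \eqref{weakform} says that $F_t$ solves the forward Kolmogorov equation of the diffusion whose generator at time $s$ is $\varphi\mapsto\tfrac12\sum_{i,j}a^{ij}(s,p)\,\partial^2_{p_ip_j}\varphi(p)+\sum_i b_i(s,p)\,\partial_{p_i}\varphi(p)$, with $a^{ij}(s,p)=\int_{\threed}\Phi^{ij}(p,q)F_s(q)\,\ud q$ and $b_i(s,p)=\int_{\threed}B_i(p,q)F_s(q)\,\ud q$ (the $B_i$ of \eqref{BiDef}). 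Since $\Phi(p,q)=\Sigma(p,q)\Sigma(p,q)^{\mathrm{T}}$ for the coefficient matrix $\Sigma$ of Proposition \ref{CoeffBM}, and since the conditional hypothesis $F\in L^1([0,T];L^\infty_7)$ together with the propagated polynomial moments from \cite{StrainTas} yields the integrability of $a$ and $b$ needed to run a superposition-principle argument, one obtains a filtered probability space carrying an $\mathbb{R}^3$-valued process $(P_t)_{t\in[0,T]}$ with $\mathrm{Law}(P_t)=F_t$ that solves \eqref{SDE1}, a stochastic differential equation driven by a space-time white noise with coefficients $\Sigma$ and $B$; the same construction applied to $\tilde F$ produces \eqref{SDE2} and a process $(\tilde P_t)$.

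Next I would couple the two equations on one probability space. For each $s$, let $R_s$ be the optimal coupling of $F_s$ and $\tilde F_s$ realizing $\mathcal{W}_2^2(F_s,\tilde F_s)$ as in \eqref{Wass2.def}; take one space-time white noise on $\mathbb{R}^3\times\mathbb{R}^3\times[0,T]$ with intensity $R_s(\ud q,\ud\tilde q)\,\ud s$, use its first $\mathbb{R}^3$-marginal to drive \eqref{SDE1} and its second to drive \eqref{SDE2}, and start from a coupling of $(P_0,\tilde P_0)$ optimal for $\mathcal{W}_2(F_0,\tilde F_0)$. This preserves $\mathrm{Law}(P_t)=F_t$, $\mathrm{Law}(\tilde P_t)=\tilde F_t$, and gives $\mathbb{E}|P_0-\tilde P_0|^2=\mathcal{W}_2^2(F_0,\tilde F_0)$. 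Applying It\^o's formula to $|P_t-\tilde P_t|^2$ and taking expectations, so that the martingale part vanishes, yields
\[
\mathbb{E}|P_t-\tilde P_t|^2=\mathcal{W}_2^2(F_0,\tilde F_0)+\int_0^t\mathbb{E}\!\int_{\mathbb{R}^3\times\mathbb{R}^3}\Delta\big(P_s,\tilde P_s,q,\tilde q\big)\,R_s(\ud q,\ud\tilde q)\,\ud s,
\]
where
\[
\Delta(p,\tilde p,q,\tilde q)=2(p-\tilde p)\cdot\big(B(p,q)-B(\tilde p,\tilde q)\big)+\big\|\Sigma(p,q)-\Sigma(\tilde p,\tilde q)\big\|^2
\]
with $\|\cdot\|$ the Hilbert--Schmidt norm. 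Set $\rho(t)\eqdef\mathbb{E}|P_t-\tilde P_t|^2$. Then $\mathcal{W}_2^2(F_t,\tilde F_t)\le\rho(t)$ and $\rho(0)=\mathcal{W}_2^2(F_0,\tilde F_0)$; assuming (as we may, since otherwise the claimed inequality is vacuous) that $F_0,\tilde F_0\in\mathcal{P}_2$, the moment propagation of \cite{StrainTas} gives $\esssup_{t\in[0,T]}\int_{\threed}|p|^2F_t(\ud p)<\infty$ and likewise for $\tilde F$, so $\rho$ is bounded on $[0,T]$ via $\rho(t)\le 2\int_{\threed}|p|^2F_t(\ud p)+2\int_{\threed}|p|^2\tilde F_t(\ud p)$.

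The heart of the argument, and the step I expect to be the main obstacle, is a pointwise bound for $\Delta$. Writing $z=p-q$, $\tilde z=\tilde p-\tilde q$, $h=p-\tilde p$, $k=q-\tilde q$ (so $z-\tilde z=h-k$), I would split $B(p,q)-B(\tilde p,\tilde q)$ and $\Sigma(p,q)-\Sigma(\tilde p,\tilde q)$ into a ``diagonal'' piece, estimated by a derivative and hence of size $|h-k|$ times a singular factor (behaving like $|z|^{-2}$ for $B$, like $|z|^{-3/2}$ for $\Sigma$), plus remainders bounded crudely by the individual terms, which carry the full first-order singularity $|z|^{-1}$ (resp.\ $|\tilde z|^{-1}$); the effective bound is then the minimum of the two. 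Producing a workable bound of this type requires sharp pointwise control of $\Lambda$ and of $S^{ij}$ in \eqref{lambda.def}--\eqref{Smatrix.def}, exploiting both representations of $\rho$ in \eqref{rho.def.eq} (the factored form near $p=q$ in particular), the derivative identities of Lemma \ref{LemmaOfB}, and the algebraic estimates of \cite{StrainTas}. The difficulty is genuinely twofold: $\Phi$ carries a first-order non-isotropic singularity at $p=q$, so $\Sigma$ is singular ($\sim|z|^{-1/2}$) and its difference quotients contribute a $|z|^{-3}$ after squaring; and simultaneously the difference quotients of $\Phi$ and $\Sigma$ grow polynomially in the energies $\pZ,\qZ$. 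The point is that these polynomial weights can be arranged so as to attach only to the variables $q$ and $\tilde q$, which is precisely what makes the conditional $L^\infty_7\cap L^1_7$ hypothesis the natural one.

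Finally I would integrate the pointwise bound against $R_s$ and take the expectation. The key computation is that the singular part of $\Delta$, after integration in $q$ against $F_s(q)\,\ud q$, is $\lesssim\|F_s\|_{L^\infty_7\cap L^1_7}\big(\Psi(|h|^2)+\Psi(|k|^2)\big)$: on $\{|z|\le1\}$ one bounds $\langle q\rangle^7F_s(q)$ by $\|F_s\|_{L^\infty_7}$ and uses $\int_{|z|\le1}\min\big(|z|^{-1},|h-k|^2|z|^{-3}\big)\,\ud z\lesssim|h-k|^2\big(1+\log(1/|h-k|)\big)$ (splitting at $|z|=|h-k|$), which is $\lesssim\Psi(|h|^2)+\Psi(|k|^2)$ by subadditivity of $\Psi$ (concavity of $\Theta\approx\Psi$ together with $x\le\Psi(x)$); on $\{|z|>1\}$ the singular factors are bounded and integration of $\langle q\rangle^7F_s(q)$ is controlled by $\|F_s\|_{L^1_7}$. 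Doing this in both $q$ and $\tilde q$, and absorbing the energy weights into the $L^\infty_7$ and $L^1_7$ norms with no residual growth in $p,\tilde p$, one obtains
\[
\int_{\mathbb{R}^3\times\mathbb{R}^3}\Delta\big(p,\tilde p,q,\tilde q\big)\,R_s(\ud q,\ud\tilde q)\ \lesssim\ \widetilde C(F_s,\tilde F_s)\Big(\Psi\big(|p-\tilde p|^2\big)+\int_{\mathbb{R}^3\times\mathbb{R}^3}\Psi\big(|q-\tilde q|^2\big)\,R_s(\ud q,\ud\tilde q)\Big).
\]
Taking the expectation over $(P_s,\tilde P_s)$ and invoking Jensen's inequality \eqref{Jensen} twice --- once for $\mathbb{E}\,\Psi(|P_s-\tilde P_s|^2)\le4\Psi(\mathbb{E}|P_s-\tilde P_s|^2)=4\Psi(\rho(s))$, and once for $\int\Psi(|q-\tilde q|^2)\,R_s\le4\Psi\big(\mathcal{W}_2^2(F_s,\tilde F_s)\big)\le4\Psi(\rho(s))$ --- and folding all absolute constants into the constant $c$ of \eqref{DefCFsTildeFs}, the identity from the second step becomes $\rho(t)\le\mathcal{W}_2^2(F_0,\tilde F_0)+\int_0^t\widetilde C(F_s,\tilde F_s)\,\Psi(\rho(s))\,\ud s$, which is the asserted inequality.
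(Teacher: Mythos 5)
Your overall strategy --- constructing coupled stochastic processes whose laws are $F_t$ and $\tilde F_t$, driven by a single white noise with covariance $R_s(\ud q,\ud\tilde q)\,\ud s$ built from the optimal $\mathcal W_2$-coupling, applying It\^o's formula to $|P_t-\tilde P_t|^2$, and closing with pointwise/integral estimates on $B$ and $\Sigma$ together with Jensen's inequality for $\Psi$ --- is exactly the paper's strategy, and your decomposition of the It\^o increment into the drift term $2(p-\tilde p)\cdot(B(p,q)-B(\tilde p,\tilde q))$ and the diffusion term $\|\Sigma(p,q)-\Sigma(\tilde p,\tilde q)\|^2$ matches the paper's $\mathcal B_s$ and $\mathcal A_s$ word for word.

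Where the proposal has a genuine gap is the sentence ``This preserves $\mathrm{Law}(P_t)=F_t$, $\mathrm{Law}(\tilde P_t)=\tilde F_t$.'' This is the content of the paper's Proposition \ref{PropUniquenessSDEs}, and it does not follow from a superposition-principle argument applied to each equation separately. The superposition principle produces, for each marginal flow $(F_t)$, a martingale solution of the generator $\mathcal A_t\varphi=\int L\varphi(\cdot,q)F_t(q)\,\ud q$ with the prescribed time-marginals; it does not give you a strong solution subordinate to a pre-specified space-time white noise, and in particular it does not let you ``re-drive'' the process by the coupled noise $W$ with intensity $R_s\,\ud s$ and then assert that the marginal law is unchanged. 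To couple the two processes through a common $W$ you must first prove strong (pathwise) well-posedness of the coupled system \eqref{SDE1}--\eqref{SDE2}, which the paper does by Picard iteration using the crude bounds $|\Phi(p,q)|\lesssim 1+q^0|p-q|^{-1}$ and $|B(p,q)|\lesssim 1+q^0|p-q|^{-2}$ from Lemmas \ref{lem:EstOfPhi} and \ref{lem:BTrivial}; and then you must identify the marginal law of the resulting strong solution with $F_t$, which the paper does by observing that $\mathrm{Law}(P_t)$ solves the \emph{linear} relativistic Landau equation \eqref{LinearLandau} and invoking uniqueness of that linear problem via the Horowitz--Karandikar martingale-problem theorem. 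Both halves of this step are nontrivial: without strong well-posedness you cannot take a common noise, and without linear uniqueness you cannot conclude $\mathrm{Law}(P_t)=F_t$ after coupling. As written, your proposal simply asserts the conclusion of that proposition. Everything downstream of it (the pointwise minimum bound, splitting the integral at $|z|=|h-k|$, controlling the logarithm by $\Psi$, and Jensen) is a faithful --- if schematic --- outline of the paper's Propositions \ref{PropEstB}, \ref{PropSig}, \ref{PropEstIntegralSingle}, \ref{EstDoubleB}.
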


We will use the integral inequality above to prove our main theorem

\begin{theorem}[Main Theorem] \label{MainTheorem} Let $T> 0$. (i) Given the initial data  $F_0$ to the relativistic Landau Eq. \eqref{RelLandau} satisfying  $F_0(p) \in L^\infty_7 \cap L^1_7$, then there exists at most one weak solution to \eqref{RelLandau} starting from $F_0(p)$  obeying the following moment bounds 
\[
\int_0^T \| F(s)\|_{L^\infty_7 \cap L^1_7}  \ud s < \infty. 
\]   
(ii) Suppose that $(F_t)_{t \in [0, T]}$ and $(F_t^n)_{t \in [0, T]}$  ($n\ge 1$) are weak solutions to Eq. \eqref{RelLandau}, satisfying 
\[
\sup_{n\ge 1} \int_0^T  \left( \|  F(s)\|_{L^\infty_7 \cap L^1_7}  +  \| F^n(s)\|_{L^\infty_7 \cap L^1_7}  \right) \ud s < \infty.
\] 
 If initially $\lim_n \mathcal{W}_2(F_0, F_0^n) =0$, then $\lim_n \sup_{t \in [0, T]} \mathcal{W}_2(F_t, F_t^n) =0$. 
\end{theorem}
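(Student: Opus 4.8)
The plan is to derive both parts of Theorem~\ref{MainTheorem} from the Main Integral Inequality of Proposition~\ref{PropIntegral} via an Osgood--Bihari (generalized Gronwall) argument. The only feature of the function $\Psi$ from Definition~\ref{DefPsi} that enters is that it is continuous, nondecreasing, nonnegative, vanishes only at $0$, and satisfies the Osgood condition
\[
\int_{0^+}\frac{\ud x}{\Psi(x)}=+\infty .
\]
This holds because $\Psi(x)=x(1-\log x)\approx x\,|\log x|$ near $0$ and $\int_{0^+}\ud x/(x|\log x|)=+\infty$ (substitute $u=-\log x$); I will also use that $1/\Psi$ is locally integrable on $(0,\infty)$ and that $\mathcal{W}_2(f,g)=0$ exactly when $f=g$, as recalled in the notation section.

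For part (i), I would start from two weak solutions $(F_t)$ and $(\tilde F_t)$ with the same initial datum $F_0$ and both satisfying the moment bound; the role of that bound is solely to guarantee, via \eqref{DefCFsTildeFs}, that $A\eqdef\int_0^T\widetilde C(F_s,\tilde F_s)\,\ud s<\infty$, so that the integral inequality is not vacuous. Since $\mathcal{W}_2^2(F_0,\tilde F_0)=0$, Proposition~\ref{PropIntegral} supplies a bounded $\rho\ge 0$ with $\mathcal{W}_2^2(F_t,\tilde F_t)\le\rho(t)$ and $\rho(t)\le r(t)\eqdef\int_0^t\widetilde C(F_s,\tilde F_s)\,\Psi(\rho(s))\,\ud s$. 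Here $r$ is absolutely continuous and nondecreasing, $r(0)=0$, and a.e.\ $r'(t)=\widetilde C(F_t,\tilde F_t)\,\Psi(\rho(t))\le\widetilde C(F_t,\tilde F_t)\,\Psi(r(t))$ by monotonicity of $\Psi$. If $r\not\equiv 0$ I would pick $t_1$ with $r(t_1)>0$, set $t_0\eqdef\sup\{t\le t_1:r(t)=0\}$, and integrate $r'/\Psi(r)\le\widetilde C$ over $(a,t_1)$ for $t_0<a<t_1$ to get $\int_{r(a)}^{r(t_1)}\ud x/\Psi(x)\le A$; letting $a\downarrow t_0$ forces the left-hand side to $+\infty$ by the Osgood condition, a contradiction. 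Hence $\rho\equiv 0$, so $\mathcal{W}_2(F_t,\tilde F_t)=0$ and $F_t=\tilde F_t$ for all $t\in[0,T]$.

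For part (ii), I would apply Proposition~\ref{PropIntegral} to each pair $(F_t)$, $(F_t^n)$, obtaining bounded $\rho_n\ge 0$ with $\mathcal{W}_2^2(F_t,F_t^n)\le\rho_n(t)\le r_n(t)\eqdef\delta_n+\int_0^t\widetilde C(F_s,F_s^n)\,\Psi(\rho_n(s))\,\ud s$, where $\delta_n\eqdef\mathcal{W}_2^2(F_0,F_0^n)\to 0$ by hypothesis and $A\eqdef\sup_n\int_0^T\widetilde C(F_s,F_s^n)\,\ud s<\infty$ by the uniform moment bound. As above, $r_n$ is absolutely continuous and nondecreasing with $r_n(0)=\delta_n$ and $r_n'\le\widetilde C(F_s,F_s^n)\,\Psi(r_n)$ a.e., so integration gives $\int_{\delta_n}^{r_n(t)}\ud x/\Psi(x)\le A$ for every $t\in[0,T]$. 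Fixing $\eps_0\in(0,1]$, the Osgood condition makes $\int_{\delta_n}^{\eps_0}\ud x/\Psi(x)\to+\infty$, so there is $N$ with $\int_{\delta_n}^{\eps_0}\ud x/\Psi(x)>A$ for $n\ge N$; then $r_n(t)<\eps_0$ for all $t\in[0,T]$ and $n\ge N$, whence $\sup_{t\in[0,T]}\mathcal{W}_2^2(F_t,F_t^n)\le\sup_{t}r_n(t)\to 0$, which is the stability claim. (If $\delta_n=0$ for some $n$, that $n$ is already covered by part (i).)

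I expect the real work to lie entirely upstream, in Proposition~\ref{PropIntegral}: constructing the stochastic representation with the coefficient matrix $\Sigma$, coupling two weak solutions in $\mathcal{W}_2$, and above all establishing the detailed pointwise and algebraic estimates for the relativistic Landau kernel $\Phi^{ij}$ and for $\Sigma$ that produce precisely the logarithmic modulus $\Psi$ and the coefficient $\widetilde C$. The deduction of Theorem~\ref{MainTheorem} itself is routine; within it the only delicate points are confirming the Osgood property of this specific $\Psi$ and, in part (ii), upgrading the pointwise-in-$t$ Bihari bound to uniform-in-$t$ convergence, both handled as indicated.
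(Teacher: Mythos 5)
Your argument is correct and follows essentially the same route as the paper: both parts are reduced to Proposition~\ref{PropIntegral} and then closed by an Osgood/Bihari (generalized Gronwall) argument using that $\int_{0^+}\ud x/\Psi(x)=+\infty$, with the $L^1_t$ bound on $\widetilde C(F_s,\tilde F_s)$ supplied by the $L^\infty_7\cap L^1_7$ moment hypothesis, and in part (ii) the bound $\int_{\delta_n}^{r_n(t)}\ud x/\Psi(x)\le A$ upgraded to $\sup_t r_n(t)\to 0$ since $\delta_n\to 0$. The only difference is presentational: the paper isolates the Osgood/Bihari step as a standalone Lemma~\ref{GronwallPsi} (whose parts (i) and (ii) it then cites), whereas you reprove that lemma inline via the auxiliary majorant $r(t)$ and a direct contradiction at $t_0=\sup\{t:r(t)=0\}$; both are valid and the substance is identical.
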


We point out that Theorem \ref{MainTheorem} in particular applies to the case when  $F_t$ and   $\tilde F_t$ are  the steady states such as
$$
J(p) = \frac{1}{4\pi} e^{-\pZ}
$$
These are called relativistic Maxwellians or the J{\"u}ttner distributions.

\subsection{A summary of the uniqueness argument} 
Theorem \ref{MainTheorem}  follows the scheme introduced by Fournier and Gu\'erin in \cite{MR1970275, MR2398952,MR2502525, MR2718931}, which is based on the stochastic representation of the regular homogeneous Landau equation.  See also  the probabilistic interpretation of the Boltzmann equation in \cite{MR512334,MR791288}.   Our result can be seen as the special relativistic counterpart of the result in  Fournier \cite{MR2718931}. 

However  the relativistic case is algebraically  much more challenging. To the best of the authors' knowledge,   the  stochastic representation  \eqref{SDE1} and \eqref{SDE2} with the particular coefficient matrix $\Sigma$ in Proposition \ref{CoeffBM}   is new in  the literature.   We now give an overview of the ideas in the proof of Theorem \ref{MainTheorem}.  

\noindent$\bullet$ We use the $2-$Wasserstein distance to measure the distance between two weak solutions to \eqref{RelLandau}, since trivially  
\[
\mathcal{W}_2(F_t, \tilde F_t) \leq \mathbb{E}(|P_t - \tilde P_t|^2). 
\]
where $F_t = \mbox{Law}(P_t)$ and $\tilde F_t = \mbox{Law}(\tilde P_t)$. 
Thus we can instead control an easier quantity $\mathbb{E}(|P_t - \tilde{P}_t|^2)$,  whose evolution   is simply  given by  It\^o's formula.   We refer to the proof of Proposition \ref{PropIntegral} for example below.

\noindent$\bullet$  The  drift term  in the evolution of $\mathbb{E}(|P_t - \tilde P_t|^2)$ can be controlled by  Proposition \ref{EstDoubleB}, while the  diffusion term  can be  controlled by  Proposition \ref{PropEstIntegralSingle}. Those integral estimates are established in Section \ref{sec:EstInt}, which further deeply depend on the crucial point-wise estimates for $\Phi$, $\Sigma$ and $B$ established in Section \ref{sec:PointEst}. Section \ref{sec:PointEst} and Section \ref{sec:EstInt} are the most technical parts in  this article. 
 
\noindent$\bullet$ In order to conclude the estimate for $\mathcal{W}_2(F_t, \tilde F_t)$, we choose very particular initial random variables  (see \eqref{InitalLaws}) and white noises $W= W( \ud p, \ud \tilde p, \ud s)$ with covariance measure $R_s(\ud p, \ud \tilde p) \ud s$ where the meaure $R_s(\ud p, \ud \tilde p) \ud s$ has marginals $F_s$ and $\tilde F_s$.

\noindent$\bullet$  To prove the crucial integral inequality (Proposition \ref{PropIntegral}),  we need  the uniqueness of the coupled SDEs \eqref{SDE1} and \eqref{SDE2}, i.e. Proposition \ref{PropUniquenessSDEs}.  Except for the standard techniques in SDEs, we also  apply Theorem B.1 in Horowitz-Karandikar \cite{MR1042343} (see also Theorem 5.2 in Bath-Karandikar \cite{BK93})  to obtain the uniqueness of  the linear relativistic Landau  \eqref{LinearLandau}, which is possible only due to our pointwise estimates of the coefficients such as
\[
|\Phi(p, q) | \leq c\left( 1 + q^0 |p-q|^{-1} \right), \quad |B(p, q)| \leq c \left( 1+ q^0 |p-q|^{-2}   \right).
\]
These estimates are proven in Lemma \ref{lem:EstOfPhi} and Lemma \ref{lem:BTrivial} below.

\section{Stochastic representation}
In this section we will present a stochastic differential equation that we will use to represent the relativistic Landau equation \eqref{RelLandau}.  As a first step, we shall further decompose the relativistic Landau kernel \eqref{ColKer} as follows. 

\subsection{Decomposition of  the kernel $\Phi$} Now we will present a decomposition of the relativistic Landau kernel \eqref{ColKer}.  The crucial point of this section is to introduce a new matrix decomposition of the kernel in \eqref{sigmaM.def}.  This matrix decomposition will allow us to present a useful stochastic representation of weak solutions to the relativistic Landau equation.  

\begin{proposition} 
 \label{SpecOfPhi}  \cite{StrainTas}.
The relativistic Landau kernel $\Phi = (\Phi^{ij})$ from \eqref{ColKer} is symmetric, positive semi-definite with null space spanned by $(\frac{p}{p^0} - \frac{q}{q^0})$.  The matrix $S$ can be decomposed as the difference of two orthogonal projectors, {\em i.e.}
\[
S= \Pi_1 - \Pi_2, 
\]
where 
\[
\Pi_1 = |q^0 p - p^0 q|^2 Id - (q^0 p - p^0 q) \otimes (q^0 p - p^0 q); 
\]
and 
\begin{equation*}
\Pi_2= |p \times q|^2 Id - |q|^2 p \otimes p - |p|^2 q \otimes q + (p \cdot q) \ ( p \otimes q + q \otimes p) 
%\\
= (p \times q) \otimes (p \times q). 
\end{equation*}
\end{proposition}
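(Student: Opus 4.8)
The plan is to reduce the entire proposition to two scalar identities for $\rho$ and then read off the spectral picture by elementary linear algebra. Symmetry of $\Phi$ is immediate from \eqref{Smatrix.def}, since each of $\rho\tau\,\delta_{ij}$, $-(p_i-q_i)(p_j-q_j)$ and $\rho(p_iq_j+p_jq_i)$ is symmetric in $(i,j)$ and $\Lambda(p,q)$ is a scalar. The content is therefore the decomposition $S=\Pi_1-\Pi_2$, from which positivity and the null space will follow.

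For the decomposition I would argue by a direct componentwise computation. The one genuinely nontrivial input is the Lagrange-type identity
\[
(p\times q)_i(p\times q)_j=|p\times q|^2\,\delta_{ij}-|q|^2p_ip_j-|p|^2q_iq_j+(p\cdot q)(p_iq_j+q_ip_j),
\]
which one gets by contracting $\varepsilon_{ikl}\varepsilon_{jmn}$ against $p_kq_lp_mq_n$; this is precisely the equivalence of the two displayed expressions for $\Pi_2$ in the statement. Setting $w=q^0p-p^0q$, I then expand $\Pi_1=|w|^2\,\mathrm{Id}-w\otimes w$ and $\Pi_2$ componentwise and substitute the energy relations $|p|^2=(p^0)^2-1$, $|q|^2=(q^0)^2-1$. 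Collecting the coefficients of $\delta_{ij}$, $p_ip_j$, $q_iq_j$ and $p_iq_j+q_ip_j$ and comparing with \eqref{Smatrix.def}, rewritten after expanding the square as
\[
S^{ij}=\rho\tau\,\delta_{ij}-p_ip_j-q_iq_j+(1+\rho)(p_iq_j+q_ip_j),
\]
the claim $S=\Pi_1-\Pi_2$ reduces to the two scalar relations $1+\rho=p^0q^0-p\cdot q$, which is just \eqref{rho.def.eq}, and $|w|^2-|p\times q|^2=(p^0q^0-p\cdot q)^2-1=\rho\tau$, the last equality using $\rho\tau=(\rho+1)^2-1$ and the first following by expanding both norms with the same energy relations. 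Keeping track of which rank-one terms carry which coefficient is the only place requiring attention, but it is purely mechanical.

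Granting the decomposition, the remaining assertions are a one-line diagonalization. Note first that $w=q^0p-p^0q=p^0q^0\left(\tfrac{p}{p^0}-\tfrac{q}{q^0}\right)$, so $\mathrm{span}(w)=\mathrm{span}\left(\tfrac{p}{p^0}-\tfrac{q}{q^0}\right)$, and this is a nonzero line exactly when $p\ne q$. Now $\Pi_1=|w|^2\,\mathrm{Id}-w\otimes w$ is $|w|^2$ times the orthogonal projection onto $w^\perp$, while $\Pi_2=(p\times q)\otimes(p\times q)$ is $|p\times q|^2$ times the orthogonal projection onto $\mathrm{span}(p\times q)$. Since $p\times q$ is orthogonal to both $p$ and $q$ it is orthogonal to $w$, so $\mathrm{span}(p\times q)\subseteq w^\perp$; picking an orthonormal basis of $\threed$ with first vector along $w$, second along $p\times q$, and third completing it (when $p\times q=0$ one has $\Pi_2=0$ and works with $S=\Pi_1$ directly), one sees that $S=\Pi_1-\Pi_2$ is diagonal in this basis with eigenvalues $0$, $|w|^2-|p\times q|^2=\rho\tau$ and $|w|^2$. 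All three are nonnegative because $\rho\ge0$ and $\tau=\rho+2>0$ by \eqref{rho.def.eq}, so $S$ is positive semidefinite with null space $\mathrm{span}(w)=\mathrm{span}\left(\tfrac{p}{p^0}-\tfrac{q}{q^0}\right)$; multiplying by the positive scalar $\Lambda$ from \eqref{lambda.def} yields the same statements for $\Phi$.

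I expect the main obstacle to be the verification of $S=\Pi_1-\Pi_2$ in the second step: it is not conceptually hard, but it requires carrying several rank-one matrices through the computation and using the relativistic energy relations consistently. Once that identity is established, positive semi-definiteness and the identification of the kernel are essentially automatic.
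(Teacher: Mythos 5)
Your proposal is correct and follows essentially the same route as the paper: both hinge on the scalar identity $\rho\tau=|q^0p-p^0q|^2-|p\times q|^2$ (equivalently $(1+\rho)^2-1=\rho\tau$), the Lagrange identity for $(p\times q)\otimes(p\times q)$, and the observation that $\Pi_1$ and $\Pi_2$ are scaled orthogonal projections onto $w^{\perp}$ and $\mathrm{span}(p\times q)$ respectively with $p\times q\perp w$, so that $S$ diagonalizes with eigenvalues $0$, $\rho\tau$, $|w|^2$. The only cosmetic difference is that the paper establishes the second formula for $\Pi_2$ by checking $\Pi_2 p=\Pi_2 q=0$ and $\Pi_2(p\times q)=|p\times q|^2(p\times q)$ on the spanning set $\{p,q,p\times q\}$ rather than by the $\varepsilon$-contraction, and the paper defers the final bookkeeping of the rank-one terms to \cite{StrainTas}, which you carry out explicitly.
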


Above $Id$ is the standard $3 \times 3$ identity matrix.  The above proposition is proven in \cite{StrainTas}.  We give a different elementary proof here for the sake of completeness.
 
\begin{proof}  Recall from \eqref{Smatrix.def} that $$S = \rho \tau Id - (p -q) \otimes (p-q) + \rho (p \otimes q + q \otimes p)$$ and  that $$  \rho \tau = |q^0 p - p^0 q|^2 - |p \times q|^2. $$
The proof will then  be done by direct calculation as seen in \cite{StrainTas}.   But we remark that both $\Pi_1$ and $\Pi_2$ are orthogonal projectors. 
Indeed,
\[
\frac{1}{|v_1|^2}\ \Pi_1 = P_{\bot v_1} = Id - \frac{v_1}{|v_1|} \otimes \frac{v_1}{|v_1|}
\]
and 
\[
\frac{1}{|v_2|^2}\  \Pi_2 = \frac{v_2}{|v_2|} \otimes \frac{v_2}{|v_2|}. 
\]
where $v_1 = q^0 p - p^0 q$ and $v_2 = p \times q$.  The last equality for $\Pi_2$ is guaranteed by the observation $ \Pi_2\  p = \Pi_2\  q =0$ and $\Pi_2 \ v_2 = |v_2|^2 v_2$. Indeed, 
\[
\begin{split}
\Pi_2 \ p  & = |p \times q|^2 p - |p|^2 |q|^2 p - |p|^2 (p \cdot q) q + (p \cdot q)^2 p + (p \cdot q) |p|^2 q   \\
& = \left( | p \times q|^2 + (p \cdot q)^2 - |p|^2 |q|^2 \right) p  = 0. 
\end{split}
\]
Similarly, one can show that $\Pi_2 \  q=0$ and $\Pi_2 v_2 = |v_2|^2 v_2$. 
\end{proof}

\medskip
Note that the rank $2$ projector   $\Pi_1$ above has the same structure as the non-relativistic Landau kernel $a(z) = \frac{1}{|z|^3} (|z|^2 Id - z \otimes z)$.  With this observation we then set $ \Pi_1 = \sigma_{\Pi_1} \,  \sigma_{\Pi_1}^\top$ with 
\begin{equation} \label{SigmaPi1}
\sigma_{\Pi_1} = \begin{bmatrix}
q^0 p_2 -p^0 q_2 & -(q^0 p_3 -p^0 q_3) & 0 \\
-(q^0 p_1 - p^0 q_1)  & 0 & q^0 p_3 -p^0 q_3 \\
0 & q^0 p_1 -p^0 q_1 & -(q^0 p_2 -p^0 q_2)
\end{bmatrix}, 
\end{equation}
which is the analog of  $\sigma(z)$, a square root of $a(z)$, as in \cite[Equation (6)]{MR2718931}.

The other  rank $1$ projector  $\Pi_2 = (p \times q) \otimes (p \times q)$  in general can be  written as  $\Pi_2 = \sigma_{\Pi_2} \ \sigma_{\Pi_2}^\top$ with $\sigma_{\Pi_2} = (p \times q) \otimes u$  for any unit vector $u \in \mathbb{S}^2$. But the particular choice of $u= \frac{1}{|p|} (p_3 \  p_2\  p_1)^\top$  will be compatible with $\Pi_1$  (or $\sigma_{\Pi_1}$):
\begin{equation}
\label{SigmaPi2}
\sigma_{\Pi_2} \eqdef \frac{1}{|p|} (p \times q) \otimes 
\begin{bmatrix}
p_3 \\p_2 \\p_1
\end{bmatrix}. 
\end{equation}
Here for the sake of clarity we point out that we are using the notation
\begin{equation}
\notag
(p \times q) \otimes \begin{bmatrix}
p_3 \\p_2 \\ p_1 
\end{bmatrix} =
 \begin{bmatrix}
p_3(p_2 q_3 -p_3 q_2) & p_2 (p_2 q_3 -p_3 q_2) & p_1 (p_2 q_3 -p_3 q_2) \\
p_3(p_3 q_1 -p_1 q_3) & p_2 (p_3 q_1 -p_1 q_3) & p_1 (p_3 q_1 -p_1 q_3) \\
p_3(p_1 q_2 -p_2 q_1) & p_2  (p_1 q_2 -p_2 q_1)  & p_1 (p_1 q_2 -p_2 q_1) \\
\end{bmatrix}. 
\end{equation}
We will observe the compatibility of $\sigma_{\Pi_1}$ and $\sigma_{\Pi_2}$ in the following proposition.

\begin{proposition} \label{PropSDecom}The matrix $S = (S^{ij})$ can be written as the following 
\[
S = \sigma_S \ \sigma_S^\top,
\]
where 
\begin{equation}\label{matrixS.eq}
\begin{split}
 \sigma_S =   &~\sigma_{\Pi_1} - \frac{|p|}{p^0 +1} \ \sigma_{\Pi_2}  \\
 = & \begin{bmatrix}
q^0 p_2 -p^0 q_2 & -(q^0 p_3 -p^0 q_3) & 0 \\
-(q^0 p_1 - p^0 q_1)  & 0 & q^0 p_3 -p^0 q_3 \\
0 & q^0 p_1 -p^0 q_1 & -(q^0 p_2 -p^0 q_2)
\end{bmatrix} \\
& \quad \quad - \frac{1}{p^0 +1} (p \times q) \otimes \begin{bmatrix}
p_3 \\p_2 \\p_1
\end{bmatrix}. 
\end{split}
\end{equation}
\end{proposition}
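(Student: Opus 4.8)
The plan is to verify the identity $S = \sigma_S\,\sigma_S^\top$ by direct computation, exploiting the decomposition $S = \Pi_1 - \Pi_2$ from Proposition \ref{SpecOfPhi} together with the factorizations $\Pi_1 = \sigma_{\Pi_1}\sigma_{\Pi_1}^\top$ and $\Pi_2 = \sigma_{\Pi_2}\sigma_{\Pi_2}^\top$ already recorded above. Expanding $\sigma_S\,\sigma_S^\top = \bigl(\sigma_{\Pi_1} - \tfrac{|p|}{p^0+1}\sigma_{\Pi_2}\bigr)\bigl(\sigma_{\Pi_1} - \tfrac{|p|}{p^0+1}\sigma_{\Pi_2}\bigr)^\top$ gives
\[
\sigma_S\,\sigma_S^\top = \Pi_1 - \frac{|p|}{p^0+1}\Bigl(\sigma_{\Pi_1}\sigma_{\Pi_2}^\top + \sigma_{\Pi_2}\sigma_{\Pi_1}^\top\Bigr) + \frac{|p|^2}{(p^0+1)^2}\,\Pi_2 .
\]
Since we want this to equal $\Pi_1 - \Pi_2$, the whole claim reduces to showing the cross-term identity
\[
\frac{|p|}{p^0+1}\Bigl(\sigma_{\Pi_1}\sigma_{\Pi_2}^\top + \sigma_{\Pi_2}\sigma_{\Pi_1}^\top\Bigr) = \Pi_2 + \frac{|p|^2}{(p^0+1)^2}\,\Pi_2 = \left(1 + \frac{|p|^2}{(p^0+1)^2}\right)\Pi_2 .
\]
Using $|p|^2 = (p^0-1)(p^0+1)$, the scalar prefactor on the right simplifies: $1 + \frac{(p^0-1)(p^0+1)}{(p^0+1)^2} = 1 + \frac{p^0-1}{p^0+1} = \frac{2p^0}{p^0+1}$. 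So the key identity to establish is
\[
\sigma_{\Pi_1}\,\sigma_{\Pi_2}^\top + \sigma_{\Pi_2}\,\sigma_{\Pi_1}^\top = \frac{2p^0}{|p|}\,\Pi_2 = \frac{2p^0}{|p|}\,(p\times q)\otimes(p\times q).
\]

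The main work is therefore to compute $\sigma_{\Pi_1}\sigma_{\Pi_2}^\top$. Writing $v_1 = q^0 p - p^0 q$ and noting that $\sigma_{\Pi_1}$ is the skew-symmetric "cross-product matrix" associated with $v_1$ — that is, $\sigma_{\Pi_1} w = w \times v_1$ for every $w$ (this can be read off column by column from \eqref{SigmaPi1}) — and that $\sigma_{\Pi_2}^\top = \tfrac{1}{|p|}\,\tilde p \otimes (p\times q)$ where $\tilde p = (p_3, p_2, p_1)^\top$, we get
\[
\sigma_{\Pi_1}\sigma_{\Pi_2}^\top = \frac{1}{|p|}\,(\sigma_{\Pi_1}\tilde p)\otimes(p\times q) = \frac{1}{|p|}\,(\tilde p \times v_1)\otimes(p\times q).
\]
Then $\sigma_{\Pi_2}\sigma_{\Pi_1}^\top = (\sigma_{\Pi_1}\sigma_{\Pi_2}^\top)^\top = \tfrac{1}{|p|}(p\times q)\otimes(\tilde p \times v_1)$, so the symmetrized sum is $\tfrac{1}{|p|}\bigl[(\tilde p\times v_1)\otimes(p\times q) + (p\times q)\otimes(\tilde p \times v_1)\bigr]$. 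Comparing with the target $\tfrac{2p^0}{|p|}(p\times q)\otimes(p\times q)$, it suffices to prove the vector identity
\[
\tilde p \times v_1 = \tilde p \times (q^0 p - p^0 q) = p^0\,(p\times q) + (\text{a multiple of } \tilde p \text{ that contributes nothing after symmetrizing})?
\]
but in fact one expects the cleaner statement that $\tilde p \times v_1$ is \emph{exactly} $p^0(p\times q)$ modulo the subtlety that $\tilde p$ is the coordinate-flipped vector; I would verify directly that $\tilde p \times (q^0 p - p^0 q) = p^0(p\times q)$ does \emph{not} hold literally, and instead check componentwise that the symmetrized rank-one sum collapses to $\tfrac{2p^0}{|p|}(p\times q)\otimes(p\times q)$. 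Concretely: expand $(p\times q)$ and $(\tilde p \times v_1)$ in coordinates, form the three-by-three matrix, and match all nine entries against $\tfrac{2p^0}{|p|}(p\times q)_i (p\times q)_j$, using repeatedly the relations $q^0 (p\times q) $, $p^0(p\times q)$, and the orthogonality $(p\times q)\cdot p = (p\times q)\cdot q = 0$ together with $|p|^2 = (p^0)^2 - 1$.

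The anticipated main obstacle is precisely this cross-term computation: $\sigma_{\Pi_1}$ and $\sigma_{\Pi_2}$ were built from \emph{different} orthonormal-frame conventions (note the reversed coordinate order $(p_3, p_2, p_1)$ in $\sigma_{\Pi_2}$), and the whole point of that peculiar choice of the unit vector $u$ is to make this cross term come out proportional to $\Pi_2$ rather than producing an extra unwanted rank-one piece. So I would not try to be slick; I would fix the identification $\sigma_{\Pi_1}w = w\times v_1$, carry out the matrix multiplication $\sigma_{\Pi_1}\sigma_{\Pi_2}^\top$ entry by entry, symmetrize, and verify the resulting $3\times3$ matrix equals $\tfrac{2p^0}{|p|}(p\times q)\otimes(p\times q)$ — this is a finite but genuinely laborious polynomial identity in $p_1,p_2,p_3,q_1,q_2,q_3$ (with $p^0,q^0$ treated as the square-root symbols), and it is the only nontrivial step. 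Once the cross-term identity is in hand, combining it with $\Pi_1 = \sigma_{\Pi_1}\sigma_{\Pi_1}^\top$, $\Pi_2 = \sigma_{\Pi_2}\sigma_{\Pi_2}^\top$, the scalar simplification $1 + \tfrac{|p|^2}{(p^0+1)^2} = \tfrac{2p^0}{p^0+1}$, and Proposition \ref{SpecOfPhi}'s $S = \Pi_1 - \Pi_2$ closes the proof immediately.
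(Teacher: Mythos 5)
Your overall scheme is exactly the one the paper uses: expand
\[
\sigma_S\sigma_S^\top = \Pi_1 - \frac{|p|}{p^0+1}\bigl(\sigma_{\Pi_1}\sigma_{\Pi_2}^\top + \sigma_{\Pi_2}\sigma_{\Pi_1}^\top\bigr) + \frac{|p|^2}{(p^0+1)^2}\Pi_2,
\]
reduce the claim to the cross-term identity $\sigma_{\Pi_1}\sigma_{\Pi_2}^\top + \sigma_{\Pi_2}\sigma_{\Pi_1}^\top = \tfrac{2p^0}{|p|}\Pi_2$, and use $1 + \tfrac{|p|^2}{(p^0+1)^2} = \tfrac{2p^0}{p^0+1}$ to close. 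The only difference in emphasis is that the paper keeps $\mu$ as a free parameter and derives $\Delta=\bigl(\mu^2 - \tfrac{2p^0}{|p|}\mu + 1\bigr)\Pi_2$ before solving $\Delta=0$; you plug in $\mu=\tfrac{|p|}{p^0+1}$ from the start. That is immaterial.

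There is, however, a concrete false step in the middle: $\sigma_{\Pi_1}$ is \emph{not} the skew-symmetric cross-product matrix of $v_1 = q^0 p - p^0 q$, and the formula $\sigma_{\Pi_1}w = w\times v_1$ fails. From \eqref{SigmaPi1}, with $v_1 = (v_{1,1},v_{1,2},v_{1,3})^\top$, the first column gives
$\sigma_{\Pi_1}e_1 = (v_{1,2},\,-v_{1,1},\,0)^\top$, whereas $e_1\times v_1 = (0,\,-v_{1,3},\,v_{1,2})^\top$; these disagree. In fact $\sigma_{\Pi_1}$ is not even skew-symmetric (its $(1,2)$-entry is $-v_{1,3}$ but its $(2,1)$-entry is $-v_{1,1}$). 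So the identification $\sigma_{\Pi_1}\tilde p = \tilde p\times v_1$ is not valid, and you are right to distrust the conclusion $\tilde p\times v_1 = p^0(p\times q)$ that it would lead to.

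But the fallback you propose — matching all nine entries of the symmetrized rank-one sum — is heavier than needed, and misses the clean statement that actually makes the whole thing tick. The precise fact you want is the vector identity
\[
\sigma_{\Pi_1}\begin{bmatrix} p_3 \\ p_2 \\ p_1\end{bmatrix} = p^0\,(p\times q),
\qquad
\sigma_{\Pi_1}\begin{bmatrix} q_3 \\ q_2 \\ q_1\end{bmatrix} = q^0\,(p\times q),
\]
which is a three-component direct computation (e.g.\ the first entry: $(q^0 p_2 - p^0 q_2)p_3 - (q^0 p_3 - p^0 q_3)p_2 = p^0(p_2 q_3 - p_3 q_2)$, the $q^0$ terms cancel). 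This is exactly \eqref{ProSDeo} in the paper. Once you have it, $\sigma_{\Pi_1}\sigma_{\Pi_2}^\top = \tfrac{1}{|p|}(\sigma_{\Pi_1}\tilde p)\otimes(p\times q) = \tfrac{p^0}{|p|}\Pi_2$, symmetrizing gives the cross-term identity, and the rest of your argument closes as written. So: right scheme, one wrong intermediate claim; replace the bogus cross-product identification with the direct three-line check of $\sigma_{\Pi_1}\tilde p = p^0(p\times q)$ and the proof is complete.
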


\begin{proof} It is straight forward to check that 
\begin{equation}\label{ProSDeo}
\sigma_{\Pi_1} \begin{bmatrix}
 p_3  \\ p_2 \\p_1
\end{bmatrix} = p^0 (p \times q), \quad \sigma_{\Pi_1} \begin{bmatrix}
q_3 \\q_2 \\q_1  
\end{bmatrix}= q^0 (p \times q).
\end{equation}
Here we define  $\sigma_{\Pi_2}$ by \eqref{SigmaPi2}. Assume that $\sigma_S = \sigma_{\Pi_1} - \mu \sigma_{\Pi_2}$, then  
\[
\begin{split}
\sigma_S \ \sigma_S^\top &  = (\sigma_{\Pi_1} - \mu \sigma_{\Pi_2}) (\sigma_{\Pi_1} - \mu \sigma_{\Pi_2})^\top \\
& = \sigma_{\Pi_1} \sigma_{\Pi_1}^\top - \mu \sigma_{\Pi_1} \sigma_{\Pi_2}^\top - \mu \sigma_{\Pi_2} \sigma_{\Pi_1}^\top + \mu^2 \sigma_{\Pi_2} \sigma_{\Pi_2}^\top
= S + \Delta,
\end{split}
\]
where 
\[
\begin{split}
\Delta  & = (\mu^2 +1) \Pi_2 - \frac{\mu}{|p|} \sigma_{\Pi_1} \begin{bmatrix}
p_3 \\ p_2 \\p_1
\end{bmatrix} (p \times q)^\top - \frac{\mu}{|p|} (p \times q)  [p_3 \ p_2\  p_1] \sigma_{\Pi_1}^\top \\
& = \left( \mu^2 -  \frac{ 2 p^0}{|p|} \mu + 1 \right) \Pi_2,
\end{split}
\]
where the last equality is ensured by \eqref{ProSDeo}. The difference $\Delta$ vanishes at $\mu = \frac{p^0 \pm 1}{|p|}$.  In particular, we can choose $\mu = \frac{p^0 -1}{|p|}= \frac{|p|}{p^0 +1}$ such that  $S= \sigma_S \ \sigma_S^T.$
\end{proof}

We finally obtain a useful formula for the  square root $\Sigma(p, q)$ of the relativistic Landau kernel $\Phi(p, q)$

\begin{proposition} \label{CoeffBM} The relativistic Landau kernel matrix $\Phi(p, q)$ can be decomposed as 
\[
\Phi = \Sigma\  \Sigma^\top,
\]
where 
\begin{multline}\label{sigmaM.def}
 \Sigma = \frac{\rho+1}{(p^0 q^0)^{1/2}} (\rho \tau)^{-3/4} \ \sigma_S  
\\
=   \frac{\rho+1}{(p^0 q^0)^{1/2}} (\rho \tau)^{-3/4} 
\Bigg\{ 
\begin{bmatrix}
q^0 p_2 -p^0 q_2 & -(q^0 p_3 -p^0 q_3) & 0 \\
-(q^0 p_1 - p^0 q_1)  & 0 & q^0 p_3 -p^0 q_3 \\
0 & q^0 p_1 -p^0 q_1 & -(q^0 p_2 -p^0 q_2)
\end{bmatrix} 
\\
 \quad \quad - \frac{1}{p^0 +1} (p \times q) \otimes \begin{bmatrix}
p_3 \\p_2 \\p_1
\end{bmatrix} \Bigg\}. 
\end{multline}
\end{proposition}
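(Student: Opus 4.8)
The plan is to deduce the statement immediately by combining the scalar factorization of $\Lambda$ with the matrix factorization of $S$ already established in Proposition \ref{PropSDecom}. Recall from \eqref{ColKer} and \eqref{lambda.def} that $\Phi(p,q) = \Lambda(p,q)\, S(p,q)$, where $\Lambda(p,q) = \frac{(\rho+1)^2}{p^0 q^0}(\rho\tau)^{-3/2}$ is a strictly positive scalar for every $p \neq q$: indeed $\rho + 1 \geq 1$, $\tau = \rho + 2 \geq 2$, $p^0, q^0 \geq 1$, and $\rho > 0$ whenever $p \neq q$ by \eqref{rho.def.eq}. The one observation needed is that $\Lambda$ is a perfect square; setting
$$\lambda(p,q) = \frac{\rho+1}{(p^0 q^0)^{1/2}}(\rho\tau)^{-3/4} \geq 0,$$
one has $\Lambda(p,q) = \lambda(p,q)^2$.

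First I would invoke Proposition \ref{PropSDecom} to write $S = \sigma_S\, \sigma_S^\top$ with $\sigma_S$ given explicitly by \eqref{matrixS.eq}. Since $\lambda(p,q)$ is a scalar, it passes freely through matrix multiplication and transposition, so
$$\Phi = \Lambda S = \lambda^2\, \sigma_S\, \sigma_S^\top = (\lambda\, \sigma_S)(\lambda\, \sigma_S)^\top = \Sigma\, \Sigma^\top,$$
where $\Sigma = \lambda\, \sigma_S = \frac{\rho+1}{(p^0 q^0)^{1/2}}(\rho\tau)^{-3/4}\, \sigma_S$, which is exactly \eqref{sigmaM.def}; substituting the explicit form of $\sigma_S$ from \eqref{matrixS.eq} produces the displayed matrix. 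This is the entire argument.

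All of the genuine content — the compatible square roots $\sigma_{\Pi_1}$, $\sigma_{\Pi_2}$ of the two orthogonal projectors in Proposition \ref{SpecOfPhi}, and the cancellation of the cross terms $\sigma_{\Pi_1}\sigma_{\Pi_2}^\top + \sigma_{\Pi_2}\sigma_{\Pi_1}^\top$ at the choice $\mu = \tfrac{|p|}{p^0+1}$ — has already been carried out in the proof of Proposition \ref{PropSDecom}, so there is essentially no remaining obstacle here. The only points worth remarking are: (i) the apparent $1/|p|$ singularity in $\sigma_{\Pi_2}$ is cancelled by the factor $|p|/(p^0+1)$, so $\Sigma$ is smooth across $p = 0$; and (ii) the sole singularity of $\Sigma$, carried by $(\rho\tau)^{-3/4}$, is the square-root-order remnant of the first-order non-isotropic singularity of $\Phi$ at $p = q$ noted after \eqref{Smatrix.def}. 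One could alternatively verify $\Phi = \Sigma\Sigma^\top$ by a direct entrywise computation, but that would merely reproduce the algebra of Propositions \ref{SpecOfPhi} and \ref{PropSDecom}.
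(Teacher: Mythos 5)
Your argument is correct and is exactly the (implicit) route the paper takes: Proposition \ref{CoeffBM} is stated in the paper without its own proof precisely because it follows immediately from Proposition \ref{PropSDecom} together with the observation that the positive scalar $\Lambda$ factors as $\lambda^2$ with $\lambda = \frac{\rho+1}{(p^0q^0)^{1/2}}(\rho\tau)^{-3/4}$. Your remarks about the cancellation of the $1/|p|$ factor in $\sigma_{\Pi_2}$ and the residual $(\rho\tau)^{-3/4}$ singularity are accurate and harmless additions.
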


\begin{remark} The matrix in the last line can be changed to 
\[
\frac{1}{q^0 +1} (p \times q) \otimes \begin{bmatrix}
q_3 \\q_2 \\q_1
\end{bmatrix},
\]
with $\sigma_{\Pi_2}$ and $\sigma_{S}$ changed correspondingly. 
\end{remark}

\begin{remark} 
Our choice of $\Sigma$, such that  $\Sigma \Sigma^\top = \Phi$, is of course not unique. But our choice in Proposition \ref{CoeffBM} appears to be more compatible with our uniqueness argument given later on in this article. We further expect this formulation can be useful in other scenarios in the future. 
\end{remark}

\subsection{Stochastic representation and weak solutions}

In this section we present the stochastic representation of weak solutions to the relativistic Landau equation  \eqref{RelLandau}.   This is only a brief summary and setup, in general for the full details of SDEs we refer to the detailed discussions in \cite{MR876085}.  

We introduce two coupled Landau stochastic processes, say $(P_t)_{t \in [0, T]}$ and $(\tilde P_t)_{t \in [0, T]}$, whose laws are weak solutions $(F_t)_{t \in [0, T]}$ and $(\tilde F_t)_{t \in [0, T]}$  to the relativistic Landau Eq. \eqref{RelLandau} respectively.    For any $s \in [0, T]$, choose  $R_s \in \ProbJ(F_s, \tilde F_s)$ to be the (unique) probability measure on $\mathbb{R}^3 \times \mathbb{R}^3$ with marginals $F_s$ and $\tilde F_s$ such that \eqref{Wass2.def} holds. Indeed, $R_s$ is the optimal transport plan which gives the $2-$Wasserstein  distance for  $F_s$ and $\tilde F_s$ in \eqref{Wass2.def}.  Consider a 3D white noise $W( \ud p, \ud \tilde p, \ud s)$ on $\R^3 \times \R^3 \times [0, T]$ with covariance measure $R_s(\ud p, \ud \tilde p) \ud s$. 

Then choose two $\R^3-$valued random variables $P_0$ and $\tilde P_0$ with laws $F_0$ and $\tilde F_0$  respectively,   independent of the white noise $W$, such that initially
\begin{equation}\label{InitalLaws}
\mathcal{W}_2^2 (F_0, \tilde F_0) = \mathbb{E}[|P_0- \tilde P_0|^2]. 
\end{equation}
Then the coupled  $\mathbb{R}^3-$valued  stochastic differential equations (SDEs) are 
\begin{equation}
\label{SDE1}
P_t = P_0 + \int_0^t \int_{\R^3 \times \R^3} \Sigma(P_s, p) \,  W(\ud p, \ud \tilde p, \ud s) 
%\\
+ \int_0^t \int_{\R^3} B(P_s, p) F_s (p) \ud p \ud s, 
\end{equation}
and 
\begin{equation}
\label{SDE2}
\tilde P_t = \tilde P_0 + \int_0^t \int_{\R^3 \times \R^3} \Sigma(\tilde P_s, \tilde p) \,  W(\ud p, \ud \tilde p, \ud s) 
%\\
+ \int_0^t \int_{\R^3} B( \tilde P_s, \tilde p) \tilde F_s ( \tilde p) \ud  \tilde p \ud s,  
\end{equation}
where $\Sigma$ is defined in \eqref{sigmaM.def} and $B$ is defined in \eqref{BiDef} respectively. Note that the filtration is  $\mathcal{F}_t= \sigma\{ P_0, \tilde P_0, W(A \times [0, s]), s \in [0, t], A \in \mathcal{B}(\mathbb{R}^3 \times \mathbb{R}^3)\}$.   Here $\mathcal{B}(\mathbb{R}^3 \times \mathbb{R}^3)$ is the Borel sigma algebra.

Given a weak solution $(F_t)_{t \in [0, T]}$ to \eqref{RelLandau}, then \eqref{SDE1} can be regarded as a classical stochastic differential equation (SDE). Indeed, \eqref{SDE1} can be rewritten as 
\begin{equation}
\label{LandauProbabilistic}
P_t = P_0 + \int_0^t \Sigma_{F_s} (P_s) \ud B_s + \int_0^t B_{F_s} (P_s) \ud s, 
\end{equation}
where $(B_t)_{t \in [0, T]}$ is a standard 3D Brownian motion and  
\[
B_{F_s} (p) = \int_{\R^3} B(p, q) F_s(q) \ud q, 
\]
and $\Sigma_{F_s}(p) $ is a square root of $\int_{\mathbb{R}^3} \Phi(p, q) F_s(q) \ud q$. Eq. \eqref{SDE1} is nothing but the standard probabilistic interpretation of the relativistic Landau \eqref{RelLandau}.   The same argument applies to Eq. \eqref{SDE2}. 

The white noise $W(\ud p, \ud \tilde p, \ud s)$ allows us to couple two Brownian motions (one in Eq. \eqref{SDE1} or its counterpart in Eq. \eqref{LandauProbabilistic},  and the other  in Eq. \eqref{SDE2}) such that the two solutions $(P_t)_{t \in [0, T]}$ and $(\tilde P_t)_{t \in [0, T]}$ (or their laws $(F_t)_{t \in [0, T]}$ and $( \tilde F_t)_{t \in [0, T]}$ respectively) would remain close to each other.

  \subsection{Outline of the rest of the article} The rest of the article is organized as follows.  In Section \ref{sec:PointEst} we prove the useful pointwise estimates of $\Sigma$ and $B$.  Then in Section \ref{sec:EstInt} we prove crucial estimates of the integrals of the quantities $\Sigma$ and $B$.    After that in Section \ref{gronwall.section} we explain a known useful generalized Gronwall inequality.  Next in Section \ref{integral.ineq.section} we give the proof of the crucial Proposition \ref{PropIntegral}.    Then finally in Section \ref{proof.main.sec} we finally prove our Main Theorem \ref{MainTheorem}.

\section{Estimates of the coefficients $\Sigma$ and $B$}\label{sec:PointEst}

In this section we will give new pointwise estimates for the important quantities in the relativistic Landau equation, $\Sigma$ and $B$, including estimates of their differences.  We initially state a useful inequality for \eqref{rho.def.eq} which is taken from \cite{MR1211782}:
\begin{proposition}\label{RhoEstProp}
Let $p, q \in \mathbb{R}^3$ and define  $\rho = p^0 q^0 - p \cdot q -1$  as in \eqref{rho.def.eq}.  Then  
$$
0 \leq \rho <  2 p^0 q^0
$$ 
and 
\begin{equation}
\label{EstimateOfRho}
\frac{|p-q|^2 + |p \times q|^2}{ 2 p^0 q^0} \leq \rho(p, q) \leq \frac{1}{2} |p-q|^2. 
\end{equation}
\end{proposition}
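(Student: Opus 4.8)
The plan is to deduce everything from the two forms of $\rho$ recorded in \eqref{rho.def.eq}, namely the linear form $\rho = p^0 q^0 - p\cdot q - 1$ and the reciprocal form $\rho = \frac{|p-q|^2 + |p\times q|^2}{p^0 q^0 + p\cdot q + 1}$, combined with three elementary scalar facts: the Cauchy--Schwarz bound $p\cdot q \le |p|\,|q|$; the inequality $|p|\,|q| + 1 \le p^0 q^0$, which is just $(1+|p|^2)(1+|q|^2) \ge (|p|\,|q|+1)^2$, i.e.\ $(|p|-|q|)^2\ge 0$, after taking square roots; and the AM--GM bound $p^0 q^0 = \sqrt{(1+|p|^2)(1+|q|^2)} \le 1 + \tfrac12|p|^2 + \tfrac12|q|^2$.

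First I would treat the two-sided bound $0\le\rho<2p^0q^0$. The inequality $\rho\ge 0$ is immediate from the reciprocal form, whose numerator and denominator are both nonnegative (the denominator is in fact $\ge 1$). For the upper bound, use the linear form and estimate $-p\cdot q \le |p|\,|q| < p^0 q^0$ (strict since $p^0 > |p|$), so that $\rho = p^0q^0 + (-p\cdot q) + (-1) < 2p^0q^0 - 1 < 2p^0 q^0$.

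Next I would prove \eqref{EstimateOfRho}. For the lower bound, rewrite the reciprocal form as the identity $\rho\,(p^0q^0 + p\cdot q + 1) = |p-q|^2 + |p\times q|^2$ and observe that the factor multiplying $\rho$ satisfies $p^0q^0 + p\cdot q + 1 \le 2p^0 q^0$; this is precisely $p\cdot q + 1 \le p^0 q^0$, which follows from $p\cdot q \le |p|\,|q|$ and $|p|\,|q| + 1 \le p^0 q^0$. Dividing the identity by $2p^0q^0$ gives $\rho \ge \frac{|p-q|^2 + |p\times q|^2}{2p^0 q^0}$. For the upper bound, use the linear form: expanding $\tfrac12|p-q|^2 = \tfrac12|p|^2 - p\cdot q + \tfrac12|q|^2$, the desired inequality $\rho \le \tfrac12|p-q|^2$ is equivalent to $p^0 q^0 \le 1 + \tfrac12|p|^2 + \tfrac12|q|^2$, which is exactly AM--GM applied to $1+|p|^2$ and $1+|q|^2$.

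There is no genuine analytic obstacle here; the only thing to be careful about is the bookkeeping, namely using the reciprocal form of $\rho$ for the lower estimates (where the numerator is manifestly $|p-q|^2 + |p\times q|^2$) and the linear form $p^0 q^0 - p\cdot q - 1$ for the upper estimates (where the reduction to AM--GM is transparent). Mixing the two representations makes the algebra unwieldy, so fixing the right form for each of the four inequalities is the main point.
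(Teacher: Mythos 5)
Your proof is correct and takes essentially the same approach as the paper: the lower bounds come from $p\cdot q + 1 \le p^0 q^0$ applied to the reciprocal form, and your AM--GM step $p^0 q^0 \le \tfrac12\big((p^0)^2 + (q^0)^2\big)$ for the upper bound is the same inequality the paper packages as the identity $\rho = \tfrac12\big(|p-q|^2 - |p^0-q^0|^2\big)$, since both are just $(p^0-q^0)^2\ge 0$.
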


\begin{proof}
The lower bound for $\rho$  is a direct consequence of the identity \eqref{rho.def.eq} and the fact that $p \cdot q + 1 \leq p^0 q^0$. Direct computations give  that
\begin{equation}
\notag
\rho= \frac{|p-q|^2 - |p^0 - q^0|^2}{2},
\end{equation}
which then implies the upper bound for $\rho$. 
\end{proof}

In the following definition we introduce a splitting of $\mathbb{R}^3 \times \mathbb{R}^3$ which will be crucial in the remainder of this article.

\begin{definition}[Splitting of double space $\mathbb{R}^3 \times \mathbb{R}^3$] \label{DefSetA} We define the subset $A$ of $\mathbb{R}^3 \times \mathbb{R}^3$ as 
\[
A  \eqdef  \{ (p, q) \vert (p^0 q^0)^{1/2} \geq |p-q|\}.  
\]
Furthermore, we define the indicator function 
\[
\IN_A(p, q) = 1- \IN_{A^c} (p, q) = \begin{cases}
1 & \mbox{if} \ (p, q) \in A, \\
0 & \mbox{if} \  (p, q) \notin A. 
\end{cases}
\]
\end{definition}

In the following remark we will explain pointwise estimates that follow from these splittings.

\begin{remark}\label{pointwise.remark}
If $(p, q) \in A$, then $p^0 q^0 \geq (p^0 q^0)^{1/2} \geq |p-q|$ since $p^0, q^0 \geq 1$.  Furthermore, if $(p, q) \in A$, then 
\[
p^0 \leq |p^0 -q^0| + q^0 \leq |p-q| + q^0 \leq (p^0 q^0)^{1/2} + q^0 \leq \frac{p^0 + q^0}{2} + q^0
 \]
 which leads to $p^0 \leq 3 q^0$. By symmetry of $p$ and $q$, one can conclude that 
 \begin{equation}\label{AProperty}
 \frac{1}{3 } q^0 \leq p^0 \leq 3 q^0, \quad \mbox{if}\  (p, q) \in A. 
 \end{equation}
Alternatively if $(p, q) \notin A$, then $|p -q | \geq (p^0 q^0)^{1/2} \geq 1$.
\end{remark}

The set $A$ and its complement $A^c$ will be used many times in this article.   Now we will use the estimate (\ref{EstimateOfRho}) for $\rho$ and recall that $\tau = \rho +2$.  We will use those observations in the proof of the following lemma.

\begin{lemma}[Estimate of $\Lambda$] \label{EstimateOfLamada} The coefficient $\Lambda(p, q) = \frac{(\rho +1)^2}{p^0 q^0} (\rho \tau)^{-3/2}$ can be estimated as 
\[
0 \leq \Lambda(p, q)  \leq c  \left(  \left(p^0 q^0 \right)^{1/2} |p-q|^{-3} \IN_A(p, q) +   |p-q|^{-2} \IN_{A^c}(p, q) \right).
\]
\end{lemma}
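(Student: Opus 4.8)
The plan is to linearize the numerator of $\Lambda$ via an exact algebraic identity and then reduce everything to two elementary lower bounds for the product $\rho\tau$. Since $\tau = \rho+2$, one has $(\rho+1)^2 = \rho(\rho+2) + 1 = \rho\tau + 1$, so the definition \eqref{lambda.def} becomes the clean splitting
\[
\Lambda(p,q) = \frac{1}{p^0 q^0}\left( (\rho\tau)^{-1/2} + (\rho\tau)^{-3/2} \right),
\]
from which $\Lambda \ge 0$ is immediate (and the asserted bound is trivial when $p=q$, so one may assume $p\ne q$, i.e.\ $\rho>0$). It therefore suffices to bound each of the two terms $\frac{(\rho\tau)^{-1/2}}{p^0 q^0}$ and $\frac{(\rho\tau)^{-3/2}}{p^0 q^0}$ by the claimed right-hand side.

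Next I would extract from Proposition \ref{RhoEstProp}, discarding the nonnegative term $|p\times q|^2$, the inequality $\rho \ge \frac{|p-q|^2}{2 p^0 q^0}$. Combined with the trivial facts $\rho\tau = \rho(\rho+2) \ge 2\rho$ and $\rho\tau = \rho(\rho+2) \ge \rho^2$, this produces the two lower bounds
\[
\rho\tau \ \ge\ 2\rho \ \ge\ \frac{|p-q|^2}{p^0 q^0}, \qquad \rho\tau \ \ge\ \rho^2 \ \ge\ \frac{|p-q|^4}{4 (p^0 q^0)^2}.
\]
The second one gives $(\rho\tau)^{-1/2} \le 2\, p^0 q^0\, |p-q|^{-2}$ and $(\rho\tau)^{-3/2} \le 8\, (p^0 q^0)^3 |p-q|^{-6}$, while the first gives the alternative estimate $(\rho\tau)^{-3/2} \le (p^0 q^0)^{3/2} |p-q|^{-3}$.

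Finally I would split according to Definition \ref{DefSetA}. On $A$, where $|p-q| \le (p^0 q^0)^{1/2}$: the first term obeys $\frac{(\rho\tau)^{-1/2}}{p^0 q^0} \le 2|p-q|^{-2} \le 2(p^0 q^0)^{1/2}|p-q|^{-3}$ using $|p-q|\le (p^0q^0)^{1/2}$, and the second term, estimated with the \emph{first} lower bound on $\rho\tau$, obeys $\frac{(\rho\tau)^{-3/2}}{p^0 q^0} \le (p^0 q^0)^{1/2}|p-q|^{-3}$. On $A^c$, where $(p^0 q^0)^{1/2} < |p-q|$ and hence $(p^0 q^0)^2 < |p-q|^4$: the first term gives $\frac{(\rho\tau)^{-1/2}}{p^0 q^0} \le 2|p-q|^{-2}$ directly, and the second term, estimated with the \emph{second} lower bound on $\rho\tau$, gives $\frac{(\rho\tau)^{-3/2}}{p^0 q^0} \le 8(p^0 q^0)^2|p-q|^{-6} \le 8|p-q|^{-2}$. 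Summing the two terms in each region and taking $c$ large enough finishes the proof. I do not expect a genuine obstacle here; the only point requiring care is that the singular term $\frac{(\rho\tau)^{-3/2}}{p^0 q^0}$ must be handled with $\rho\tau \ge |p-q|^2/(p^0 q^0)$ on $A$ but with $\rho\tau \ge |p-q|^4/(4(p^0 q^0)^2)$ on $A^c$ — choosing the other lower bound in either region yields a power of $p^0 q^0$ with the wrong sign relative to the target.
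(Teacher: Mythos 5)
Your proof is correct, and it takes a genuinely different route from the paper. The paper splits on $0\le\rho\le 1$ versus $\rho\ge 1$: in the first case it uses $\rho+1\le 2$, $\tau\ge 2$ and $\rho\ge |p-q|^2/(2p^0q^0)$ to get $\Lambda\lesssim (p^0q^0)^{1/2}|p-q|^{-3}$, and in the second it uses $\rho+1\le 2\rho$ to get $\Lambda\lesssim (p^0q^0\rho)^{-1}\lesssim |p-q|^{-2}$; it then observes the stated bound is just $\max\{(p^0q^0)^{1/2}|p-q|^{-3},\,|p-q|^{-2}\}$, and this max reduces to the indicator form by Definition~\ref{DefSetA}. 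Your argument replaces the case split on $\rho$ with the exact identity $(\rho+1)^2=\rho\tau+1$, giving $\Lambda=\frac{1}{p^0q^0}\bigl((\rho\tau)^{-1/2}+(\rho\tau)^{-3/2}\bigr)$, and then bounds each piece directly over the regions $A$ and $A^c$ by choosing, for the more singular $(\rho\tau)^{-3/2}$ term, whichever of the two elementary lower bounds $\rho\tau\ge 2\rho$ or $\rho\tau\ge\rho^2$ yields the correct sign for the power of $p^0q^0$. Your version is a little more direct in that the regions you integrate over are exactly the regions appearing in the conclusion, so the final matching step (``the max equals the indicator form'') disappears; the paper's version is arguably more generic in that the $\rho\le 1$/$\rho\ge 1$ split is reused verbatim in Lemmas~\ref{lem:EstOfPhi} and~\ref{lem:BTrivial} and in the proof of Proposition~\ref{PropEstB}, so it keeps the subsequent arguments parallel. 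Both proofs are valid, with the same essential ingredient (the lower bound $\rho\ge |p-q|^2/(2p^0q^0)$).
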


\begin{proof}
If $ 0 \leq \rho(p, q)   \leq 1$, then from \eqref{lambda.def} and \eqref{rho.def.eq} we have
\begin{equation*} 
\Lambda(p, q) \leq \frac{4}{p^0 q^0} \frac{(p^0 q^0)^{3/2}}{|p-q|^3} = 4 (p^0 q^0)^{1/2}\  |p-q|^{-3},
\end{equation*}
since $
\rho \geq \frac{|p-q|^2 }{ 2 p^0 q^0}
$
by Proposition \ref{RhoEstProp}, $\tau = \rho + 2 \geq 2$ and $\rho +1 \leq 2$.

 Otherwise if  $\rho \geq 1$, then 
$\rho +1 \leq   2 \rho$ and similarly 
\[
\Lambda(p, q) \leq \frac{1}{p^0 q^0} \frac{(2 \rho)^2}{ (\rho^2)^{3/2}} = \frac{4}{p^0 q^0 \rho } \leq \frac{8}{|p-q|^2}. 
\]  
Then 
\[
\Lambda(p, q) \lesssim \max\{ (p^0q^0)^{1/2} |p-q|^{-3}, |p-q|^{-2}  \}. 
\]
This completes the proof after using Definition \ref{DefSetA} and Remark \ref{pointwise.remark}. 
\end{proof}

We remark that Lemma \ref{EstimateOfLamada} shall be compared to  \cite[Lemma 12]{StrainTas}. For instance, if  $\rho \geq 1$, then  using 
$\rho +1 \leq   2 \rho$, $\tau \ge 2$ again, one obtains 
\[
\Lambda(p, q) \leq \frac{1}{p^0 q^0} \frac{(2 \rho)^2}{ \rho^{3/2}} = \frac{4\rho^{1/2} }{p^0 q^0 } \lesssim \frac{(p^0 q^0)^{1/2} }{p^0 q^0 }
\lesssim \frac{4 }{(p^0 q^0)^{1/2} }. 
\] 
However, the estimates in Lemma \ref{EstimateOfLamada} would be compatible with the indicator $\IN_A$, which will be used throughout this article. 

Now we give a useful pointwise estiamte of the kernel $\Phi$ in \eqref{ColKer}.

\begin{lemma}[Estimate of $\Phi$]\label{lem:EstOfPhi}  The relativistic Landau kernel $\Phi= (\Phi^{ij})$ can be estimated  as 
\[
|\Phi(p, q)| \leq c \left( 1 + (\min\{p^0, q^0 \}) |p-q|^{-1}  \right).  
\]

\end{lemma}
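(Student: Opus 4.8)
The plan is to estimate $|\Phi(p,q)| = \Lambda(p,q)\,|S(p,q)|$ by combining the estimate of $\Lambda$ from Lemma~\ref{EstimateOfLamada} with a direct pointwise bound on the matrix norm of $S$, and then splitting according to whether $(p,q) \in A$ or $(p,q) \in A^c$. First I would bound $|S(p,q)|$. Using the decomposition $S = \Pi_1 - \Pi_2$ from Proposition~\ref{SpecOfPhi}, we have $|\Pi_1| \le |v_1|^2 = |q^0 p - p^0 q|^2$ and $|\Pi_2| = |v_2|^2 = |p\times q|^2$, so $|S| \lesssim |q^0 p - p^0 q|^2 + |p \times q|^2$. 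Alternatively, and more usefully, one reads off from \eqref{Smatrix.def} that $|S| \lesssim \rho\tau + |p-q|^2 + \rho|p||q| \lesssim \rho\tau + |p-q|^2 + \rho\, p^0 q^0$; since $\tau = \rho + 2$ and (by Proposition~\ref{RhoEstProp}) $\rho \le \tfrac12|p-q|^2 \le 2p^0 q^0$, this gives $|S| \lesssim \rho\, p^0 q^0 + |p-q|^2 \lesssim |p-q|^2 p^0 q^0$, after using $\rho \le \tfrac12 |p-q|^2$ and bounding the lower-order term. So the working bound is $|S(p,q)| \lesssim |p-q|^2 p^0 q^0$.

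Next I would combine this with Lemma~\ref{EstimateOfLamada}. On the set $A$ we have $\Lambda \lesssim (p^0 q^0)^{1/2} |p-q|^{-3}$, so
\[
|\Phi| \lesssim (p^0 q^0)^{1/2} |p-q|^{-3} \cdot |p-q|^2 p^0 q^0 = (p^0 q^0)^{3/2} |p-q|^{-1}.
\]
This is not yet the claimed bound, so on $A$ I need a better estimate of $|S|$: here I would use the $\Pi_1 - \Pi_2$ form together with the identity $\rho\tau = |q^0 p - p^0 q|^2 - |p\times q|^2$ to get the sharper $|S| \lesssim \rho\tau + \rho\, p^0 q^0 \lesssim \rho\, p^0 q^0$ (using $\tau \approx 1 + \rho$ and $\rho \lesssim p^0 q^0$), and then $\rho \lesssim |p-q|^2$ gives $|S| \lesssim |p-q|^2 p^0 q^0$ again — so the real gain must come from using property \eqref{AProperty}, namely $p^0 \approx q^0$ on $A$, to replace one factor of $(p^0 q^0)^{1/2}$ by $\min\{p^0,q^0\}$ and absorb the rest. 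Concretely, on $A$, $|p-q| \le (p^0 q^0)^{1/2} \approx \min\{p^0,q^0\}$, so $(p^0 q^0)^{3/2}|p-q|^{-1} = (p^0q^0)^{1/2}\cdot p^0 q^0 |p-q|^{-1}$; this does not obviously collapse, which signals that I should instead bound $|S|$ more carefully by $\rho\tau\,|{\rm Id}| + \ldots$ keeping track that the dangerous term is $\rho\, p^0 q^0$ and that $\Lambda$ carries a compensating $(p^0 q^0)^{-1}$. Redoing it: $|\Phi| = \Lambda |S| \lesssim \frac{(\rho+1)^2}{p^0 q^0}(\rho\tau)^{-3/2}\big(\rho\tau + \rho\, p^0 q^0\big)$. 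When $\rho \le 1$: the bracket is $\lesssim 1 + \rho\, p^0 q^0 \lesssim p^0 q^0$ (as $\rho\gtrsim |p-q|^2/(p^0q^0)$ and trivially $\rho\,p^0q^0 \ge \rho \ge$ the other term when $p^0q^0\ge 1$), $(\rho+1)^2 \approx 1$, and $(\rho\tau)^{-3/2} \approx \rho^{-3/2}$, so $|\Phi| \lesssim \rho^{-1/2} \lesssim (p^0q^0)^{1/2}|p-q|^{-1}$ using the lower bound $\rho \gtrsim |p-q|^2/(p^0q^0)$ — and on $A$, $(p^0q^0)^{1/2} \approx \min\{p^0,q^0\} + $ a bounded factor times $|p-q|$, giving the desired form. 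When $\rho \ge 1$: $(\rho+1)^2 \approx \rho^2$, $(\rho\tau)^{-3/2}\approx \rho^{-3}$, bracket $\lesssim \rho\,p^0q^0$, so $|\Phi| \lesssim \frac{\rho^2}{p^0q^0}\rho^{-3}\rho\,p^0q^0 = 1$, which is covered by the constant term. On $A^c$ one runs the same dichotomy but now $|p-q|^2 \ge p^0 q^0 \ge 1$, so the $\rho\tau$ term in the bracket can dominate; one checks $\rho\le 1$ forces $|p-q|^2 \lesssim p^0q^0 \le |p-q|^2$, a narrow regime, and in all cases the bound closes at $|\Phi| \lesssim 1 \lesssim 1 + \min\{p^0,q^0\}|p-q|^{-1}$.

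The main obstacle I anticipate is the bookkeeping on the set $A$: one must extract exactly the factor $\min\{p^0,q^0\} = \min\{p^0,q^0\}$ rather than the cruder $(p^0q^0)^{1/2}$ or $p^0q^0$, and this requires using both the refined $\Lambda$ estimate from Lemma~\ref{EstimateOfLamada} (with its $(p^0q^0)^{-1}$ prefactor that cancels the growth in $|S|$) and the comparability $p^0 \approx q^0$ from \eqref{AProperty}, being careful in the small-$\rho$ versus large-$\rho$ subcases since the power of $\rho$ in $(\rho+1)^2(\rho\tau)^{-3/2}$ changes. Everything else is routine algebra, and I would present it as: (1) bound $|S| \lesssim \rho\, p^0q^0 + \rho\tau$ from \eqref{Smatrix.def} or the $\Pi_1-\Pi_2$ decomposition; (2) multiply by $\Lambda$ and split into $\rho \le 1$ and $\rho \ge 1$; (3) in each case use Proposition~\ref{RhoEstProp} to convert powers of $\rho$ into powers of $|p-q|$ and $p^0q^0$; (4) finally restrict to $A$ or $A^c$ and apply Remark~\ref{pointwise.remark} to rewrite $(p^0q^0)^{1/2}$ as $\min\{p^0,q^0\}$ plus an absorbable term, yielding $|\Phi(p,q)| \le c(1 + \min\{p^0,q^0\}|p-q|^{-1})$.
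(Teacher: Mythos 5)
Your proposal is essentially correct, and the key observation that makes it go through is the clean bound $|S(p,q)| \lesssim \rho\, p^0 q^0$ (which follows from $\rho\tau \lesssim \rho\, p^0 q^0$, $|p-q|^2 \le 2\rho\, p^0 q^0$, and $\rho|p||q|\le \rho\, p^0 q^0$). Combining with \eqref{lambda.def} gives
\[
|\Phi| \le \Lambda\,|S| \lesssim (\rho+1)^2\,\rho\,(\rho\tau)^{-3/2},
\]
which is $\lesssim \rho^{-1/2}$ when $\rho\le 1$ and $\lesssim 1$ when $\rho\ge 1$, and $\rho^{-1/2}\lesssim (p^0q^0)^{1/2}|p-q|^{-1}$ by Proposition~\ref{RhoEstProp}.

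The route differs mildly from the paper's. The paper writes $\Phi^{ij} = I_1 + I_2 + I_3$, splitting according to the three terms of \eqref{Smatrix.def} (the $\delta_{ij}$ piece, the $(p-q)\otimes(p-q)$ piece, and the $p\otimes q + q\otimes p$ piece), bounds each one individually via the same $\rho\le 1$ vs.\ $\rho\ge 1$ dichotomy and the lower bound $\rho\gtrsim |p-q|^2/(p^0q^0)$, and then concludes by symmetry of $p$ and $q$. You instead estimate $|S|$ as a single block; this is more economical and gives the same final powers. For the conversion of $(p^0q^0)^{1/2}$ into $\min\{p^0,q^0\}$, you gesture at a nice trick that is in fact cleaner than the paper's use of $\IN_A,\IN_{A^c}$: since $p^0$ is $1$-Lipschitz, $(p^0q^0)^{1/2}\le\max\{p^0,q^0\}\le\min\{p^0,q^0\}+|p^0-q^0|\le\min\{p^0,q^0\}+|p-q|$, hence $(p^0q^0)^{1/2}|p-q|^{-1}\le\min\{p^0,q^0\}|p-q|^{-1}+1$, with no $A$/$A^c$ split and no separate symmetry step.

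One slip to repair in your writeup: in the case $\rho\le 1$ you say "the bracket is $\lesssim 1 + \rho\, p^0 q^0 \lesssim p^0 q^0$", and then immediately assert $|\Phi|\lesssim\rho^{-1/2}$. But $\Lambda\cdot p^0q^0 \approx \rho^{-3/2}$ when $\rho\le1$, which is \emph{not} bounded by $(p^0q^0)^{1/2}|p-q|^{-1}$ in general; what you actually want is the sharper $|S|\lesssim \rho\, p^0 q^0$ (keep the $\rho$, do not absorb it into a constant), giving $\Lambda|S|\approx \rho^{-3/2}\cdot\rho = \rho^{-1/2}$, which is the quantity you then control via Proposition~\ref{RhoEstProp}. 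The final answer you state is right, but the intermediate bound you wrote would not yield it as written.
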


\begin{proof} We recall from \eqref{lambda.def} and \eqref{Smatrix.def} that 
\[
\begin{split}
\Phi^{ij} (p, q) & =   \frac{1}{p^0 q^0 }\frac{(\rho + 1)^2 }{(\rho (\rho +2 ))^{1/2}} \delta_{ij} - \Lambda(p, q) (p_i - q_i) (p_j - q_j) \\
& \, \, + \frac{1}{p^0 q^0} \frac{(\rho +1)^2 }{ \rho^{1/2} (\rho + 2)^{3/2}} (p_i q_j + p_j q_i) \\
& = I_1 + I_2 + I_3. 
\end{split}
\]
First $I_1$ can be estimated similarly as in Lemma \ref{EstimateOfLamada}. When $0 \leq \rho \leq 1$, using $\rho \geq \frac{|p-q|^2}{ 2 p^0 q^0}$,  we have that
\[
\frac{1}{p^0 q^0 }\frac{(\rho + 1)^2 }{(\rho (\rho +2 ))^{1/2}} \leq  c  \frac{1}{p^0 q^0} \frac{(p^0 q^0)^{1/2}}{|p-q|} \leq c (p^0 q^0)^{-1/2} |p-q|^{-1} \leq c |p-q|^{-1}. 
\]
When $\rho \geq 1$, using Proposition \ref{RhoEstProp} and $\rho \le 2 p^0 q^0$, one has 
\[
\frac{1}{p^0 q^0 }\frac{(\rho + 1)^2 }{(\rho (\rho +2 ))^{1/2}} \lesssim \rho \frac{1}{p^0 q^0} \lesssim 1. 
\]
Hence $|I_1| \leq c (1 + |p-q|^{-1})$.  

Using Lemma \ref{EstimateOfLamada} and noting that $p^0 \leq 3 q^0$ for $(p, q) \in A$,  we have that
\[
|I_2| \leq c \left( q^0 |p-q|^{-1} \IN_A(p, q) + \IN_{A^c} (p, q) \right). 
\]
The third term can be estimated as 
\begin{equation}\notag
|I_3| \leq  2 \frac{(\rho+ 1)^2}{ \rho^{1/2} (\rho+2)^{3/2}} 
%\\
\leq c \max\{1, \frac{(p^0 q^0)^{1/2}}{|p-q|} \} \leq c \left( q^0 |p- q|^{-1} \IN_A(p, q) + \IN_{A^c}(p, q) \right). 
\end{equation}
We therefore conclude that
\[
|\Phi(p, q) | \leq c \left( 1 +  q^0 |p-q|^{-1} \right). 
\]
We use the symmetry of $p$ and $q$ to complete this lemma. 
\end{proof}

Now we proceed to  estimate the drift term $B= (B_i)_{i=1}^3$, defined in \eqref{BiDef}. 

\begin{lemma}\label{lem:BTrivial} The drift term $B$ has the pointwise bound 
$$
|B(p, q)|   \leq c \left( \min(p^0, q^0) |p-q|^{-2} \IN_A(p, q)  + \IN_{A^c}(p,q) \right).
$$
\end{lemma}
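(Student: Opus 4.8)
The plan is to reduce everything to the bound on $\Lambda$ that is already available. By \eqref{BiDef} we have $B_i(p,q)=\Lambda(p,q)(\rho+2)(q_i-p_i)$, so, writing $\tau=\rho+2$ as usual, the Euclidean norm is simply $|B(p,q)| = \Lambda(p,q)\,\tau\,|p-q|$. Using \eqref{lambda.def} this equals $\frac{(\rho+1)^2}{p^0q^0}\,\rho^{-3/2}\tau^{-1/2}\,|p-q|$, so I would estimate the prefactor $\Lambda\tau = \frac{(\rho+1)^2}{p^0q^0}\rho^{-3/2}\tau^{-1/2}$ by splitting into the regimes $0\le\rho\le 1$ and $\rho\ge 1$, just as in the proof of Lemma \ref{EstimateOfLamada}.

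For $0\le\rho\le 1$ one has $(\rho+1)^2\le 4$, $\tau^{-1/2}\le 1$, and, by Proposition \ref{RhoEstProp}, $\rho\ge |p-q|^2/(2p^0q^0)$; hence $\Lambda\tau\le c\,(p^0q^0)^{1/2}|p-q|^{-3}$ and $|B(p,q)|\le c\,(p^0q^0)^{1/2}|p-q|^{-2}$. For $\rho\ge 1$ one has $\rho+1\le 2\rho$ and $\tau^{-1/2}=(\rho+2)^{-1/2}\le\rho^{-1/2}$, so $\Lambda\tau\le 4/(p^0q^0)$ and $|B(p,q)|\le 4|p-q|/(p^0q^0)$; moreover the elementary chain $|p-q|\le|p|+|q|\le p^0+q^0\le 2p^0q^0$ (using $p^0,q^0\ge 1$) shows $|B(p,q)|\le 8$ whenever $\rho\ge 1$.

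It remains to recast these two bounds in the claimed form via the splitting of Definition \ref{DefSetA}. On $A$, estimate \eqref{AProperty} gives $(p^0q^0)^{1/2}\le\sqrt3\,\min(p^0,q^0)$, while the defining inequality $|p-q|\le(p^0q^0)^{1/2}$ gives $|p-q|^3\le(p^0q^0)^{3/2}\le\sqrt3\,p^0q^0\min(p^0,q^0)$, i.e. $|p-q|/(p^0q^0)\le\sqrt3\,\min(p^0,q^0)|p-q|^{-2}$; thus in both regimes $|B(p,q)|\le c\,\min(p^0,q^0)|p-q|^{-2}$ on $A$. On $A^c$ one has $|p-q|\ge(p^0q^0)^{1/2}\ge 1$: in the regime $\rho\le 1$ this turns $(p^0q^0)^{1/2}|p-q|^{-2}$ into $|p-q|^{-1}\le 1$, and in the regime $\rho\ge 1$ we already know $|B(p,q)|\le 8$, so $|B(p,q)|\le c$ on $A^c$. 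Collecting the four cases yields $|B(p,q)|\le c\big(\min(p^0,q^0)|p-q|^{-2}\IN_A(p,q)+\IN_{A^c}(p,q)\big)$, which is the lemma.

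I do not expect a genuine obstacle here: it is bookkeeping on top of Lemma \ref{EstimateOfLamada} and Proposition \ref{RhoEstProp}. The one subtlety worth flagging is that one cannot simply insert the crude bound $\tau\le\tfrac12|p-q|^2+2$ into $\Lambda\le c(p^0q^0)^{1/2}|p-q|^{-3}\IN_A(p,q)$, since for $|p-q|$ comparable to $(p^0q^0)^{1/2}$ this over-counts by a full power of $p^0q^0$ and the $\min(p^0,q^0)$ weight is lost; keeping the factor $\tau^{-1/2}$ and splitting on the size of $\rho$ — together with the cheap inequality $|p-q|\le 2p^0q^0$ for the $\rho\ge 1$ part — is exactly what makes the stated weight come out.
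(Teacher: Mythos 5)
Your proof is correct and follows essentially the same route as the paper: bound $|B|=\Lambda\tau|p-q|$ by splitting into $\rho\le 1$ (using $\rho\ge|p-q|^2/(2p^0q^0)$) and $\rho\ge 1$ (using comparability of $\rho$, $\rho+1$, $\rho+2$), then recast via the $A/A^c$ splitting of Definition \ref{DefSetA} and Remark \ref{pointwise.remark}. The paper leaves the final recasting implicit; you have simply spelled it out, including the useful observation $|p-q|\le 2p^0q^0$ for the $\rho\ge1$, $A^c$ case.
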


\begin{proof} One can estimate $B$ as in \eqref{BiDef} with \eqref{lambda.def} as in the following
\[
 |B(p, q)| \leq  |p- q| \frac{1}{ p^ 0 q^0 } \frac{(\rho + 1)^2}{ \rho^{3/2} \tau^{1/2}} . 
\]
If  $0 \leq \rho \leq 1$, using  the estimate $\rho  \geq  \frac{|p-q|^2}{2 p^0 q^0}$ again,  one has 
\[
\begin{split}
& |B(p, q)|  \leq c|p-q| \ \frac{1}{p^0 q^0} \ \rho^{-3/2} \leq c (p^0 q^0)^{1/2} |p-q|^{-2}. 
\end{split}
\]
Otherwise,  if   $\rho \geq 1$, then 
$ \rho \leq \rho + 1 \leq 2 \rho$ and $\rho \leq \rho+2 = \tau \leq 3 \rho$. 
This yields
\[
 |B(p, q)|    \leq c |p-q| \frac{1}{p^0 q^0}. 
\]
To complete the proof we use Definition \ref{DefSetA} and Remark \ref{pointwise.remark}. 
\end{proof}

In the next proposition, we will  estimate for $|B(p, q) - B(\tilde{p}, \tilde{q})|$. 

\begin{proposition}[Pointwise estimate for $B$] \label{PropEstB} For $p, q, \tilde p, \tilde q \in \R^3$, we have
\begin{multline}\notag
 |B(p, q) - B(\tilde p, \tilde q)| \leq c \min   \Big\{  \varphi_B^1(p, q) + \varphi_B^1(\tilde p, \tilde q),  
 \\
 |p - \tilde p | (\varphi_B^2(p, q) +  \varphi_B^2(\tilde p , q) ) +|q- \tilde q| ( \varphi_B^2(q, \tilde p) + \varphi_B^2( \tilde q, \tilde  p) ) \Big\}, 
\end{multline}
where 
\begin{equation}\label{TriB}
\varphi_B^1(p, q)  \eqdef  \min(p^0, q^0) |p-q|^{-2} \IN_A(p, q) + \IN_{A^c}(p, q), 
\end{equation}
and 
\begin{equation}
\label{LipsB} 
\varphi_B^2(p,q)  \eqdef   (q^0)^3 |p-q|^{-3} \IN_A(p, q) + \IN_{A^c}(p, q). 
\end{equation}

\end{proposition}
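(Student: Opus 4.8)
The plan is to prove the two bounds in the $\min$ separately. The first bound, $|B(p,q)-B(\tilde p,\tilde q)|\le c(\varphi_B^1(p,q)+\varphi_B^1(\tilde p,\tilde q))$, is immediate from the triangle inequality $|B(p,q)-B(\tilde p,\tilde q)|\le |B(p,q)|+|B(\tilde p,\tilde q)|$ together with Lemma \ref{lem:BTrivial}, which is exactly the statement that $|B(p,q)|\le c\,\varphi_B^1(p,q)$. So all the work goes into the second (Lipschitz-type) bound.

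For the second bound, I would first split the difference along one variable at a time, writing
\[
B(p,q)-B(\tilde p,\tilde q) = \bigl(B(p,q)-B(\tilde p,q)\bigr) + \bigl(B(\tilde p,q)-B(\tilde p,\tilde q)\bigr),
\]
so it suffices to bound each difference where only one argument moves. Recalling from \eqref{BiDef} that $B_i(p,q)=\Lambda(p,q)(\rho+2)(q_i-p_i)$ and that $\Lambda(p,q)(\rho+2) = \frac{(\rho+1)^2}{p^0q^0}(\rho\tau)^{-3/2}\tau = \frac{(\rho+1)^2}{p^0q^0}\rho^{-3/2}\tau^{-1/2}$, I would think of $B$ as $g(p,q)\,(q-p)$ with $g$ a scalar function, and control $|B(p,q)-B(\tilde p,q)|$ by
\[
|g(p,q)-g(\tilde p,q)|\,|q-p| + |g(\tilde p,q)|\,|p-\tilde p|.
\]
The second term is handled directly by the bound on $|g|$ (essentially Lemma \ref{lem:BTrivial} divided by $|p-q|$, i.e. a factor of $(q^0)\,|p-q|^{-3}$-type on $A$), and for the first term I would use the mean value theorem: write $g(p,q)-g(\tilde p,q)=\int_0^1 \nabla_p g(p_\theta,q)\cdot(p-\tilde p)\,d\theta$ with $p_\theta = \theta p+(1-\theta)\tilde p$, and estimate $|\nabla_p g(p_\theta,q)|$. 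The gradient of $g$ in $p$ brings down one extra power of the singularity $\rho^{-1/2}\approx (p^0q^0)^{1/2}|p-q|^{-1}$ on the region where $\rho\le 1$, plus polynomially growing factors in $p^0,q^0$ from differentiating $\rho$, $p^0$, etc.; collecting these and using $p^0\approx q^0$ on $A$ (Remark \ref{pointwise.remark}) yields the $(q^0)^3|p-q|^{-3}$ weight of $\varphi_B^2$. The analogous computation in the $q$ variable gives the $\varphi_B^2(q,\tilde p)$ and $\varphi_B^2(\tilde q,\tilde p)$ terms, and the asymmetric appearance of $\tilde p$ in those arguments comes from the fact that in the second difference $B(\tilde p,q)-B(\tilde p,\tilde q)$ the first slot is frozen at $\tilde p$.

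The main obstacle will be the bookkeeping of the $A$ versus $A^c$ splitting through the mean value theorem step: the segment $p_\theta$ connecting $p$ and $\tilde p$ need not stay inside $A(\,\cdot\,,q)$ even if both endpoints do, and conversely. The standard remedy, which I would follow, is a case analysis: (i) if both $(p,q),(\tilde p,q)\in A^c$, use the first ($\varphi_B^1$-type) bound, which is $\le \IN_{A^c}$ on each term, absorbed into the $\min$; (ii) if both are in $A$ with $|p-\tilde p|$ small relative to $|p-q|$ (say $|p-\tilde p|\le \tfrac12|p-q|$), the whole segment stays in a comparable region where $|p_\theta - q|\approx |p-q|$ and $p_\theta^0\approx p^0\approx q^0$, so the MVT estimate goes through cleanly; (iii) if $|p-\tilde p|$ is large relative to $|p-q|$, then the Lipschitz bound is weaker than the triangle bound and one simply falls back on $|B(p,q)|+|B(\tilde p,q)|\lesssim \varphi_B^1(p,q)+\varphi_B^1(\tilde p,q)$, checking that this is dominated by $|p-\tilde p|(\varphi_B^2(p,q)+\varphi_B^2(\tilde p,q))$ in this regime since then $|p-\tilde p|\gtrsim |p-q|$ and the extra powers of $q^0/|p-q|$ make $\varphi_B^2$ larger than $\varphi_B^1$. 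Carefully combining cases (i)--(iii), and the symmetric treatment for the $q$-difference, produces exactly the claimed estimate. I expect the gradient estimates on $g$ and the region-comparability in case (ii) to be the technically heaviest, but entirely routine given the explicit formula for $\Lambda$ and the estimate \eqref{EstimateOfRho} for $\rho$.
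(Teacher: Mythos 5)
Your high-level strategy matches the paper's: triangle inequality plus Lemma~\ref{lem:BTrivial} for the first bound; then split $B(p,q)-B(\tilde p,\tilde q)$ one variable at a time, factor $B=\bar\Lambda(p,q)(q-p)$, use the product rule to reduce to a bound on $|\bar\Lambda(p,q)-\bar\Lambda(\tilde p,q)|$, and then apply a mean-value-type argument together with a case analysis on membership in $A$ versus $A^c$. That is exactly the skeleton of the paper's proof. However, your case analysis for the mean-value step has two genuine gaps.

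First, your case~(i) reasoning is wrong. To prove $|B(p,q)-B(\tilde p,\tilde q)|\le c\min\{X,Y\}$ you must establish \emph{both} $|B-B|\le cX$ and $|B-B|\le cY$; you cannot ``absorb into the $\min$'' by proving only the $\varphi_B^1$ bound when $(p,q),(\tilde p,q)\in A^c$. In that regime $\varphi_B^1=1$ but the Lipschitz bound requires $|B(p,q)-B(\tilde p,q)|\lesssim |p-\tilde p|$, which is stronger when $|p-\tilde p|<1$. The paper's Case~IV proves precisely this: using $\bar\Lambda\lesssim (p^0q^0)^{-1}$ and $|\nabla_p\bar\Lambda(r(t_\star),q)|\lesssim |p-q|^{-1}$ on $A^c$, one gets $|\bar\Lambda(p,q)-\bar\Lambda(\tilde p,q)|\lesssim|p-\tilde p|\,|p-q|^{-1}$, which combined with the $\min\{|p-q|,|\tilde p-q|\}$ prefactor in \eqref{BI_1One} yields the needed $|p-\tilde p|$.

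Second, the mixed case --- $(p,q)\in A$ while $(\tilde p,q)\in A^c$ (or vice versa), with $|p-\tilde p|$ small compared to $|p-q|$ --- falls through your cases (i), (ii), (iii) entirely. This case is not vacuous: e.g.\ with $q=0$ the set $A$ is a ball of radius about $1.27$ in $p$, and one can take $p,\tilde p$ straddling that sphere with $|p-\tilde p|$ as small as desired. The paper devotes Cases~I and~II to exactly this configuration, and the argument there is not trivial: for instance, when $|\tilde p-q|\le|p-q|$ and the path point $r(t_\star)$ lies in $A$, one must show $(q^0)^3|\tilde p-q|^{-4}\lesssim |\tilde p-q|^{-1}+(q^0)^3|p-q|^{-4}$ by a further dichotomy on whether $|\tilde p-q|\gtrless\tfrac12 q^0$, using the consequences of the $A$/$A^c$ membership of the two endpoints. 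Your sketch does not address this, and the remark that ``the extra powers of $q^0/|p-q|$ make $\varphi_B^2$ larger than $\varphi_B^1$'' only applies in the large-$|p-\tilde p|$ regime.

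A smaller but notable difference: the paper does not use the straight segment $p_\theta=\theta p+(1-\theta)\tilde p$ but instead constructs a curved path $r(t)$ from $\tilde p$ to $p$ satisfying $|r(t)-q|\geq c\min\{|p-q|,|\tilde p-q|\}$ and $|r'|\lesssim|p-\tilde p|$. This avoids having to assume $|p-\tilde p|\le\tfrac12|p-q|$ to keep the path away from $q$ --- on the straight segment one only gets $|p_\theta-q|\geq|p-q|-|p-\tilde p|$, which degenerates when $|p-\tilde p|\approx|p-q|$. Your remedy (fall back to the triangle bound when $|p-\tilde p|$ is large) can substitute for this, but then all four endpoint-membership cases must still be verified in the small-$|p-\tilde p|$ regime, not just ``both in $A$''. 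You also misquote the size of the ``extra power'' brought down by differentiation: it is roughly $\rho^{-1}|\nabla_p\rho|\approx(p^0q^0)|p-q|^{-1}$, not $\rho^{-1/2}$; this is harmless for the scheme but worth fixing since it drives the exponent $(q^0)^3$ in $\varphi_B^2$.
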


\begin{proof} By the triangle inequality we have
\[
|B(p, q) - B(\tilde p, \tilde q)| \leq |B(p, q)| + |B(\tilde p, \tilde q)| \leq c \left( \varphi_B^1(p, q) + \varphi_B^1 (\tilde p, \tilde q) \right).
\]
The last inequality is a direct consequence of Lemma \ref{lem:BTrivial}.    In the rest of the proof, we only need to prove the 2nd part, i.e. the  Lipschitz estimate. 

From \eqref{BiDef} and \eqref{lambda.def} we have $B(p, q) = (q - p) \bar \Lambda(p, q)$, where we define the  local notation $\bar 
\Lambda$ as 
\[
\bar \Lambda(p, q) =  (\rho +2 ) \Lambda(p, q) = \frac{1}{p^0 q^0} \frac{(\rho +1)^2}{ \rho^{3/2} (\rho +2)^{1/2}}. 
\]
Similar to the estimates for $\Lambda$ and $B$, we  can also estimate $\bar \Lambda(p, q)$ as 
\begin{equation}\label{EstOfBarLam}
0 \leq \bar \Lambda(p, q) \leq c \left( (p^0 q^0)^{1/2} |p-q|^{-3} \IN_A(p, q) +  (p^0 q^0)^{-1} \IN_{A^c}(p, q)  \right). 
\end{equation}
Indeed, for the case $ \rho \leq 1$, using \eqref{EstimateOfRho}  we have the estimate 
\[
0 \leq \bar \Lambda(p, q) \leq  c \frac{1}{p^0 q^0} \, \rho^{-3/2} \leq c (p^0 q^0)^{1/2} \, |p-q|^{-3}.
\]
When $\rho \geq 1$, similarly one has 
\[
\bar \Lambda(p, q) \leq c \frac{1}{p^0 q^0}, 
\]
since  in this case $\rho, \rho +1 $ and $\rho +2$ are all comparable.

\smallskip
Then we proceed as follows 
\[
\begin{split}
|B(p, q) - B(\tilde{p}, \tilde q)| &\leq  |B(p, q) - B(\tilde{p},  q)| + |B( \tilde p, q) - B(\tilde{p}, \tilde q)| \\
& = I_1 + I_2.
\end{split}
\]
Now we bound $I_1$ and $I_2$ respectively. 

First, one has 
\[
\begin{split}
I_1 & = |(q-p) \bar \Lambda(p, q) - (q-\tilde p) \bar \Lambda(\tilde p, q)|  \\
& \leq  |q-p| |\bar \Lambda(p, q) - \bar \Lambda( \tilde p, q) | + |p - \tilde p | \bar \Lambda(\tilde p, q).
\end{split}
\]
By symmetry one also has 
\[
I_1 \leq |q - \tilde{p}|  |\bar \Lambda(p, q) - \bar \Lambda( \tilde p, q) | + |p-\tilde{p}| \bar \Lambda(p, q). 
\]
Therefore,
\begin{equation}\label{BI_1One}
I_1 \leq \min\left\{ |p-q|, |\tilde{p}-q |\right\}\,  |\bar \Lambda(p, q) - \bar \Lambda( \tilde p, q) | 
%\\
+ |p - \tilde p| \left( \bar \Lambda(p, q) + \bar \Lambda(\tilde p, q) \right). 
\end{equation}
Note that the last term above satisfies a better estimate than \eqref{LipsB}.

Now it suffices to bound $|\bar \Lambda(p, q) - \bar \Lambda(\tilde p, q)|$. To this end we shall first compute the derivative of $ \partial_{p_j}\bar \Lambda$.  Note that 
\[
\partial_{p_j} \rho = \frac{q^0}{p^0} p_j - q_j, \quad \partial_{p_j} \left( \frac{1}{p^0 q^0} \right) = - \frac{1}{(p^0 q^0)^2} \ \frac{q^0}{p^0} p_j.
\]
Define a function $\varphi: \mathbb{R}_{+} \to \mathbb{R}_+$ as 
\[
\varphi(x) = \frac{(x+1)^2}{x^{3/2} (x+2)^{1/2}},
\]
whose derivative is given by
\[
\varphi'(x) =  - \frac{(x+1)(x+3)}{x^{5/2} (x+2)^{3/2}} <0, \quad \mbox{for }\ x >0. 
\]
Therefore, we have that
\[
\partial_{p_j} \bar \Lambda(p, q) = \partial_{p_j} \left( \frac{1}{p^0 q^0}\right) \varphi(\rho ) + \frac{1}{p^0 q^0} \varphi'(\rho ) \partial_{p_j} \rho,
\]
which can be simplified as 
\[
\partial_{p_j} \bar \Lambda(p, q)  = -\bar \Lambda(p, q) \left\{  \frac{p_j}{(p^0)^2} + \frac{\rho +3}{ \rho (\rho +1) (\rho +2)} \left( \frac{q^0}{p^0} p_j - q_j\right) \right\}. 
\]
The terms inside the bracket $\{ \cdot\}$ can be estimated as 
\[
\frac{|p|}{(p^0)^2} = \frac{|p|}{p^0} \frac{1}{p^0}  < \frac{1}{p^0},
\]
and 
\[
\begin{split}
&\frac{\rho +3}{\rho (\rho +1 )(\rho +2)} \frac{1}{p^0} \left| q^0 p - p^0 q \right| \leq c \rho^{-1}  |p-q|   \leq  c \frac{ p^0 q^0 }{  |p-q|}, 
\end{split}
\]
where we use the lower bound for $\rho$,  $\rho \geq \frac{|p-q|^2}{2 p^0 q^0}$,  and also  observe that  
\[
 |q^0 p - p^0 q| \leq  2 p^0 |p-q|,  
\]
and 
\[
\sup_{x \geq 0} \frac{x+3}{(x+1)(x+2)}  \leq 2. 
\]
Consequently, combining the estimate \eqref{EstOfBarLam}, one has 
\begin{equation}\notag
|\nabla_p \bar \Lambda(p, q)|  
%\\
 \leq c \left( \frac{1}{p^0} + \frac{p^0 q^0}{|p-q|} \right)  \left\{  (p^0 q^0)^{1/2} \, |p-q|^{-3} \IN_A(p, q) +   \frac{1}{p^0 q^0} \IN_{A^c}(p, q) \right\}. 
\end{equation}
Note that 
\[
\frac{1}{p^0}\leq 2 \frac{ q^0}{|p-q|} \leq 2 \frac{p^0 q^0}{|p-q|}
\]
simply by 
\[
|p-q| \leq 2 \max\{p^0, q^0\} \leq 2 p^0 q^0.
\]
Thus 
\begin{equation}\label{EstDerBarLam}
\begin{split}
|\nabla_p \bar \Lambda(p, q)|  & \leq c   \left\{   (p^0 q^0)^{3/2} \, |p-q|^{-4} \IN_{A}(p, q)\, +   |p-q|^{-1}\IN_{A^c} (p, q)   \right\}. 
\end{split}
\end{equation}
By symmetry,  $|\nabla_q \bar \Lambda(p, q)|$ satisfies  the same estimate \eqref{EstDerBarLam}.   Then further recall that $\pZ \approx \qZ$ on $A$ as in \eqref{AProperty}.  Thus we have that
\begin{equation}\label{EstDer2BarLam}
|\nabla_p \bar \Lambda(p, q)|   \leq c   \left\{   (q^0)^{3} \, |p-q|^{-4} \IN_{A}(p, q)\, +   |p-q|^{-1}\IN_{A^c} (p, q)   \right\}. 
\end{equation}
Now by a variant of the mean value theorem we {\it claim} that
\begin{multline}\label{MeanVlaue}
|\bar \Lambda(p,q) - \bar \Lambda(\tilde p, q) | \lesssim |p-\tilde{p}|   \bigg[     (q^0)^{3}  |p-q|^{-4} \textbf{1}_A(p, q) + |p-q|^{-1} \textbf{1}_{A^c}(p,q)  
\\
  + (q^0)^{3}  |\tilde p-q|^{-4} \textbf{1}_A( \tilde p, q) + |\tilde p-q|^{-1} \textbf{1}_{A^c}(\tilde   p,q)     \bigg].  
\end{multline}
We will prove \eqref{MeanVlaue} at the end of the proof.

Combining   \eqref{BI_1One} and \eqref{MeanVlaue},  one finally obtains  
\[
\begin{split}
& I_1 = |B(p, q) - B(\tilde p, q)| \\
& \leq c\,  |p - \tilde p | \bigg[\left\{(q^0)^{3} |p-q|^{-3} \IN_A(p, q) +     \IN_{A^c}(p, q)\right\} \\
&\quad \quad  \qquad  +  \left\{ ( q^0)^{3} |\tilde p-q|^{-3} \IN_A(\tilde p, q)\, +    \IN_{A^c}(\tilde p, q) \right\} \\
&\quad \quad   +  \left\{ (p^0 q^0)^{1/2} |p-q|^{-3} \IN_A(p, q)  +  (p^0 q^0)^{-1} \IN_{A^c}(p, q)  \right\} \\
& \quad \quad    +  \left\{ (\tilde p^0 q^0)^{1/2} |\tilde p-q|^{-3} \IN_A(\tilde p, q) +  (\tilde p^0 q^0)^{-1} \IN_{A^c}(\tilde p, q) \right\} \bigg]. 
\end{split}
\]
We can simplify it to be the following 
\begin{equation*}
 I_1 = |B(p, q) - B(\tilde p, q)|  \leq c\,  |p - \tilde p |  \left( \varphi_B^2(p, q) + \varphi_B^2(\tilde p, q) \right) 
\end{equation*}
where  the function $\varphi_B^2$ is defined in \eqref{LipsB}.  By symmetry, one can estimate $I_2$ similarly.

To complete the proof we need to establish  the claim  \eqref{MeanVlaue}.    To do so  let us define a smooth  path $r:[0, 1] \to \mathbb{R}^3$ such that $r(0) = \tilde{p}$, $r(1) = p$, and $|r'(t)| \leq C |p-\tilde{p}|$ for any $t \in [0, 1]$.  We can choose this path around the fixed point $q$, such that $|r(t) - q| \geq c\min \{|p-q|, |\tilde p - q| \} $ for any $t \in [0, 1]$.  We mention that there are many choices of such a path. For instance, if we assume that $|\tilde p - q| = |p -q |=\delta>0$, then we can choose the great circle between $p$ and $\tilde p $ on the sphere $\partial B(q, \delta)$ as the path. Otherwise, if say $0 < \delta_1 = |\tilde p - q|< |p - q| = \delta_2$ then we can choose a smooth path (such as a geodesic) in $B(q, \delta_2) \setminus B(q, \delta_1)$, connecting $p $ and $\tilde p$.  In particular we can choose a path $r(t)$ with constant speed.   Then the total length of the chosen path  is comparable to $|p-\tilde p|$.

Then by the fundamental theorem of calculus, we have that
\begin{multline}\label{upper.est.bd}
 |\bar \Lambda(p,q) - \bar \Lambda(\tilde p, q)  | = \left|\int_0^1 \frac{\ud }{\ud t } \bar \Lambda(r(t), q)  \ud t \right | 
\\
 \leq  C  |p-\tilde{p}| \int_0^1 |\nabla_p \bar \Lambda(r(t), q) | \ud t 
\leq  C |p-\tilde{p}| \max_{t \in [0, 1]} |\nabla_p \bar \Lambda(r(t), q) |
\\
 = C  |p - \tilde p|   |\nabla_p \bar \Lambda(r(t_\star), q) |, 
\end{multline}
where there is a fixed $t_\star \in [0,1]$.  To finish the proof of  \eqref{MeanVlaue}, we do a  case by case analysis of the upper bound in \eqref{upper.est.bd} as follows.  

{\bf Case I}: Suppose that $(p, q) \in A$ (which implies that $(p^0 q^0)^{1/2} \geq |p-q|$) and that $(\tilde p, q) \in A^c$  (which implies that $(\tilde p^0 q^0)^{1/2} \leq |\tilde p - q|$) as in Remark \ref{pointwise.remark}.  
\begin{itemize}
\item If  $|p-q| \leq |\tilde p - q|$ and  $(r(t_\star), q) \in A$, we have that
\[
|\nabla_p \bar \Lambda(r(t_\star), q) | \lesssim (q^0)^{3} |r(t_\star) - q|^{-4}  \leq (q^0)^{3} |p- q|^{-4}. 
\]
This is enough to establish \eqref{MeanVlaue} in this case.  In the following in each case we will justify that the term $|\nabla_p \bar \Lambda(r(t_\star), q) |$ in \eqref{upper.est.bd}  satisfies an upper bound which is equivalent to \eqref{MeanVlaue}. 

\item If  $|p-q| \leq |\tilde p - q|$ and  $(r(t_\star), q) \in A^c$, then 
\[
|\nabla_p \bar \Lambda(r(t_\star), q) |  \lesssim |r(t_\star) - q|^{-1}  \lesssim |p - q|^{-1}   \lesssim (q^0)^{3} |p-q|^{-4},
\] 
where the last inequality is ensured by the fact $(p, q) \in A$ and the pointwise estimates as in Remark \ref{pointwise.remark}.  

\item If $|\tilde p - q | \leq |p - q|$ and $( r(t_\star), q) \in A$, then 
\[
|\nabla_p \bar \Lambda(r(t_\star), q) | \lesssim (q^0)^{3}  |r(t_\star) - q|^{-4} \leq (q^0)^{3}  |\tilde p - q|^{-4}. 
\]
To get a suitable estimate in this case, we will show that 
\[
(q^0)^{3}  |\tilde p - q|^{-4} \lesssim |\tilde p - q|^{-1} + (q^0)^{3}  |p - q|^{-4}. 
\]
To this end if $|\tilde p - q | \geq \frac 1 2 \,  q^0 $, then 
\[
(q^0)^{3}  |\tilde p - q|^{-4} \leq 8 |\tilde p - q|^{-1},
\]
the conclusion then follows. 
Now alternatively assume that $|\tilde p - q| \leq \frac 1 2 q^0$. Using the same technique as in the Remark \ref{pointwise.remark}, one can show that 
\[
\frac{1}{2} q^0 \leq \tilde p^0 \leq \frac{3}{2} q^0. 
\] 
Now since further  $(p, q) \in A$ and $(\tilde p, q) \in A^c$ we have that
\[
\frac{1}{\sqrt{2}} q^0 \leq (\tilde p^0 q^0)^{1/2}  \leq |\tilde p - q| \leq |p - q | \leq (p^0 q^0)^{1/2} \leq \sqrt{3} q^0,
\]
which shows that $(q^0)^{3}  |\tilde p - q|^{-4} \lesssim (q^0)^{3}  |p-q|^{-4 }$ and this establishes the desired estimate in this case.   This holds since 
$
q^0 \approx |\tilde p - q| \approx  |p - q |
$
in this range.

\item If $|\tilde p - q | \leq |p - q|$ and $( r(t_\star), q) \in A^c$, then trivially 
\[
|\nabla_p \bar \Lambda(r(t_\star),q) | \lesssim |r(t_\star) - q|^{-1} \leq |\tilde p - q|^{-1}. 
\] 
This completes the proof of Case 1. 
\end{itemize}

{\bf Case II:} This is the case where $(p, q) \in A^c$  and $(\tilde p, q) \in A$. This case can be treated exactly the same as Case I by symmetry.

{\bf Case III}: If both $(p, q)$ and $(\tilde p, q)$ are in $A$, without loss of generality we assume that $|p - q| \leq |\tilde p - q|$.   The case $|p - q| \geq |\tilde p - q|$ can be handled in exactly the same way as below.
\begin{itemize}

\item If $(r(t_\star), q) \in A$, then of course we have that
\[
|\nabla_p \bar \Lambda(r(t_\star), q) | \lesssim (q^0)^3 |r(t_\star) - q|^{-4 } \leq (q^0)^3 |p - q|^{-4}. 
\]

\item If $(r(t_\star), q) \in A^c$, then  since  $(p, q) \in A$ we have
\[
|\nabla_p \bar \Lambda(r(t_\star), q) | \lesssim |r(t_\star) -  q|^{-1} \leq  |p - q|^{-1}  \leq (p^0 q^0)^{3/2} |p -q|^{-4} \lesssim (q^0)^3 |p - q|^{-4}. 
\]
This completes the estimates for Case III.
\end{itemize}

{\bf Case IV}: If both $(p, q) $ and $(\tilde p, q)$ are in $A^c$, again without loss of generality we can assume that $|p - q| \leq |\tilde p - q|$.  

\begin{itemize}
\item If $(r(t_\star), q) \in A^c$, we can conclude by 
\[
|\nabla_p \bar \Lambda(r(t_\star), q) | \lesssim |r(t_\star) -  q|^{-1} \leq  |p - q|^{-1}. 
\] 

\item Now consider the case that $(r(t_\star), q) \in A$. We need to establish the estimate 
\[
 |\nabla_p \bar \Lambda(r(t_\star), q) | \lesssim  (q^0)^3  |r(t_\star) -q|^{-4}  
 \lesssim (q^0)^3 |p - q|^{-4} \lesssim |p - q|^{-1}. 
\]
In particular if we establish $(q^0)^3 |p - q|^{-4} \lesssim |p - q|^{-1}$ then we are done.  To this end if
 $|p  - q| \geq \frac 1 2 q^0$, then we can conclude by 
\[
(q^0)^3 |p- q|^{-4} \leq 8 |p - q|^{-1}. 
\]
Otherwise, if $|p -q| \leq \frac 1 2 q^0$, as in Case I, we have that 
\[
\frac 1 2  q^0 \leq p^0 \leq \frac 3 2 q^0. 
\]
Combining this with the condition that $(p,q) \in A^c$ we have that
\[
\frac{1}{\sqrt{2}} q^0 \leq (p^0 q^0 )^{1/2} \leq |p - q| \leq \frac{1}{2} q^0, 
\]
which implies that $(q^0)^3 |p-q|^{-4} \approx |p-q|^{-1} \approx 1/q^0$.   This establishes the desired estimate.
\end{itemize}
This completes the proof. 
\end{proof}

In the next proposition, we will  estimate $|\Sigma(p, q) - \Sigma(\tilde{p}, \tilde{q})|^2$. 

\begin{proposition}[Pointwise estimate for $\Sigma$] \label{PropSig} One first has the trivial estimate of $\Sigma$ as 
\begin{equation}\label{TriSig}
\begin{split}
|\Sigma(p, q) - \Sigma(\tilde{p}, \tilde{q})|^2  \leq  & c   \left(\varphi_\Sigma^1(p, q) + \varphi_\Sigma^1 (\tilde p, \tilde q)  \right) 
\end{split}
\end{equation}
where 
\begin{equation}
\label{DefPhiSig1}
\varphi_\Sigma^1(p, q)   \eqdef   \min\{ (p^0)^3, (q^0)^3 \} |p-q|^{-1} \IN_A(p, q) 
%\\
+ \min\{ (p^0)^2, (q^0)^2 \} \IN_{A^c}(p, q), 
\end{equation}
and the Lipschitz estimate of $\Sigma$ as 
\begin{equation}
\label{LipSig}
\begin{split}
&|\Sigma(p, q) - \Sigma(\tilde{p}, \tilde{q})|^2 \leq 2 |\Sigma(p, q) - \Sigma(\tilde{p}, {q})|^2 + 2 |\Sigma( \tilde p, q) - \Sigma(\tilde{p}, \tilde{q})|^2 \\
& \leq c |p- \tilde p|^2 \left( \varphi_\Sigma^2(p, q) + \varphi_\Sigma^2(\tilde p, q) \right)  + c |q- \tilde{q}|^2 \left( \varphi_\Sigma^2(q, \tilde p) + \varphi_\Sigma^2(\tilde q, \tilde p)  \right)  \\
\end{split}
\end{equation}
where 
\begin{equation}
\label{DefPhiSig2} 
\varphi_\Sigma^2(p, q)   \eqdef   \min\{ (p^0)^7, (q^0)^7 \} |p-q|^{-3} \IN_A(p, q) +  (q^0)^5 \IN_{A^c}(p, q). 
\end{equation}
\end{proposition}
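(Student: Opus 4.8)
The plan is to follow the same strategy already used for $B$ in Proposition \ref{PropEstB}, but now applied to the matrix $\Sigma$ from \eqref{sigmaM.def}. The trivial bound \eqref{TriSig} will come immediately from the triangle inequality $|\Sigma(p,q)-\Sigma(\tilde p,\tilde q)|^2 \le 2|\Sigma(p,q)|^2 + 2|\Sigma(\tilde p,\tilde q)|^2$, so the real work is the pointwise bound $|\Sigma(p,q)|^2 \lesssim \varphi^1_\Sigma(p,q)$, which I would establish exactly as in Lemma \ref{EstimateOfLamada} and Lemma \ref{lem:EstOfPhi}. Write $\Sigma = \frac{\rho+1}{(p^0q^0)^{1/2}}(\rho\tau)^{-3/4}\,\sigma_S$ and note $|\sigma_S| \lesssim |q^0 p - p^0 q| + \frac{|p|}{p^0+1}|p\times q|$. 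Using $|q^0 p - p^0 q| \le 2\min\{p^0,q^0\}|p-q|$ (as already observed inside the proof of Proposition \ref{PropEstB}) and $|p\times q| \le |q^0 p - p^0 q|$ together with $\frac{|p|}{p^0+1}\le 1$, one gets $|\sigma_S|\lesssim \min\{p^0,q^0\}|p-q|$. Then split on $\rho\le 1$ versus $\rho\ge 1$: on $\rho\le 1$ use $\rho \ge \frac{|p-q|^2}{2p^0q^0}$ to obtain the factor $(\rho\tau)^{-3/4}\lesssim (p^0q^0)^{3/4}|p-q|^{-3/2}$, giving $|\Sigma|^2 \lesssim \frac{(p^0q^0)^{3/2}}{p^0q^0}|p-q|^{-3}\cdot \min\{p^0,q^0\}^2|p-q|^2 \approx (p^0q^0)^{1/2}\min\{p^0,q^0\}^2|p-q|^{-1}$; on $A$ one has $p^0\approx q^0$ by \eqref{AProperty}, so this is $\lesssim \min\{(p^0)^3,(q^0)^3\}|p-q|^{-1}$, matching the first term of \eqref{DefPhiSig1}. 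On $\rho\ge 1$ all of $\rho,\tau,\rho+1$ are comparable, so $|\Sigma|^2 \lesssim \frac{\rho^2}{p^0q^0}\rho^{-3}\min\{p^0,q^0\}^2|p-q|^2 \lesssim \frac{\min\{p^0,q^0\}^2|p-q|^2}{p^0q^0\,\rho}$; using $\rho\ge \frac{|p-q|^2}{2p^0q^0}$ again this is $\lesssim \min\{p^0,q^0\}^2$, which on $A^c$ gives the second term of \eqref{DefPhiSig1}. Keeping track of which indicator applies via Definition \ref{DefSetA} and Remark \ref{pointwise.remark} yields \eqref{TriSig}.

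For the Lipschitz estimate \eqref{LipSig}, after the triangle inequality it suffices to bound $|\Sigma(p,q)-\Sigma(\tilde p,q)|^2$ (the $q$-variable piece being symmetric). I would write $\Sigma(p,q) = g(\rho)\,\sigma_S(p,q)$ with $g(x) = \frac{x+1}{(p^0q^0)^{1/2}}(x(x+2))^{-3/4}$ and differentiate in $p$ using the product rule, exactly paralleling the computation of $\partial_{p_j}\bar\Lambda$ in the proof of Proposition \ref{PropEstB}. The entries of $\sigma_S$ are polynomials in $p,q,p^0,q^0$, whose $p$-gradients are bounded by $c\,q^0$ (using $|\partial_{p_j}p^0|\le 1$ and $\frac{|p|}{p^0+1}\le 1$), so $|\sigma_S|\lesssim q^0|p-q|$ and $|\nabla_p\sigma_S|\lesssim q^0$ on $A$ (where $p^0\approx q^0$). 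The scalar prefactor contributes a derivative controlled by $\partial_{p_j}\rho = \frac{q^0}{p^0}p_j - q_j$, so $|\partial_{p_j}\rho|\lesssim |q^0 p - p^0 q|/p^0 \lesssim |p-q|$, together with the same $\rho^{-1}$-type factor from $g'(\rho)/g(\rho)$ that appeared for $\bar\Lambda$. Collecting the worst powers of $q^0$ and the lowest powers of $|p-q|$, and squaring, should produce the bound $|\nabla_p\Sigma(p,q)|^2 \lesssim (q^0)^7|p-q|^{-3}\IN_A + (q^0)^5\IN_{A^c}$ (the extra powers of $q^0$ over the $\bar\Lambda$ case coming from the two extra linear factors in $\sigma_S$ relative to the scalar $\bar\Lambda$, and from the $(p^0q^0)^{-1/2}$ vs.\ $(p^0q^0)^{-1}$ normalization).

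Then, as in Proposition \ref{PropEstB}, I would upgrade the gradient bound to a difference bound by the same variant of the mean value theorem: choose a constant-speed path $r:[0,1]\to\R^3$ from $\tilde p$ to $p$ staying in the annulus around $q$ so that $|r(t)-q|\ge c\min\{|p-q|,|\tilde p-q|\}$ and $|r'(t)|\le C|p-\tilde p|$, integrate $\frac{\ud}{\ud t}\Sigma(r(t),q)$, and bound by $C|p-\tilde p|\max_t|\nabla_p\Sigma(r(t),q)|$. The four-case analysis (both points in $A$; both in $A^c$; one in each) is verbatim the one in the proof of Proposition \ref{PropEstB}, the only arithmetic change being that the exponent $3$ on $q^0$ is replaced by $7$ and the $A^c$ term acquires $(q^0)^5$; the key algebraic facts needed in the cross-over cases — namely $q^0\approx|p-q|\approx|\tilde p-q|$ when one point is in $A$ and the other in $A^c$ with $|p-q|\approx q^0$, and $(q^0)^\alpha|p-q|^{-\beta}\lesssim |p-q|^{-(\beta-\alpha)}$ when $|p-q|\gtrsim q^0$ — carry over unchanged. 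Squaring the resulting linear-in-$|p-\tilde p|$ bound gives the $|p-\tilde p|^2$ factor in \eqref{LipSig}.

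The main obstacle I anticipate is purely bookkeeping: verifying the precise exponents $7$ and $5$ in \eqref{DefPhiSig2}. Because $\sigma_S$ is a genuine matrix with several distinct monomials (and the $\frac{1}{p^0+1}(p\times q)\otimes(p_3,p_2,p_1)^\top$ correction term has a different homogeneity than $\sigma_{\Pi_1}$), one must check that no entry or derivative produces a higher power of $q^0$ than claimed, and that on $A^c$ the bound $(q^0)^5$ (rather than something smaller) is both correct and sufficient; handling the $\frac{|p|}{p^0+1}$ factor carefully — it is $O(1)$ but its $p$-derivative is $O(1/p^0)$, which actually helps — is the delicate point. Everything else is a direct transcription of the methods already developed for $\Lambda$, $\Phi$, and $B$ earlier in this section.
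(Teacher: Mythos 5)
Your proposal follows essentially the same route as the paper: the trivial bound \eqref{TriSig} is obtained by factoring $|\Sigma|^2 \le \Lambda\,|\sigma_S|^2$, bounding $|\sigma_S|\lesssim \min\{p^0,q^0\}|p-q|$, and invoking Lemma \ref{EstimateOfLamada}; the Lipschitz bound \eqref{LipSig} is obtained by a product rule applied to the decomposition $\Sigma = \sqrt{\Lambda}\,\sigma_S$, together with estimates on $\nabla_p\sqrt{\Lambda}$ and on the gradients of the matrix entries of $\sigma_S$, followed by the same constant-speed-path variant of the mean value theorem and the same four-case analysis on $A$ and $A^c$ used in the proof of Proposition \ref{PropEstB}. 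Your exponent bookkeeping is only sketched, but the exponents $7$ and $5$ do arise exactly the way you anticipate (the $(q^0)^5$ coming from squaring $(q^0)^3|p-q|^{-1}$ and then using $|p-q|^2\ge p^0q^0\ge q^0$ on $A^c$), so this is a correct reproduction of the paper's argument.
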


\begin{proof}  We first prove \eqref{TriSig}. By the definition of $\Sigma(p, q)$, 
\begin{equation}\label{EstSigProof1}
|\Sigma(p, q)|^2 \leq \Lambda(p, q) |\sigma_S(p, q)|^2 . 
\end{equation}
Now we only need to bound $\sigma_S$, {\em i.e. }
\begin{equation*}
\begin{split}
 \sigma_S = & \begin{bmatrix}
q^0 p_2 -p^0 q_2 & -(q^0 p_3 -p^0 q_3) & 0 \\
-(q^0 p_1 - p^0 q_1)  & 0 & q^0 p_3 -p^0 q_3 \\
0 & q^0 p_1 -p^0 q_1 & -(q^0 p_2 -p^0 q_2)
\end{bmatrix} \\
& \quad \quad - \frac{1}{p^0 +1} (p \times q) \otimes \begin{bmatrix}
p_3 \\p_2 \\p_1
\end{bmatrix}.
\end{split}
\end{equation*}
As we argue before, 
\[
|q^0 p - p^0 q| \leq 2 \min \{ p^0, q^0 \} \, |p-q|. 
\]
Moreover, we recall as in below \eqref{SigmaPi2} we have that 
\[
(p \times q) \otimes \begin{bmatrix}
p_3 \\p_2 \\ p_1 
\end{bmatrix} = \begin{bmatrix}
p_3(p_2 q_3 -p_3 q_2) & p_2 (p_2 q_3 -p_3 q_2) & p_1 (p_2 q_3 -p_3 q_2) \\
p_3(p_3 q_1 -p_1 q_3) & p_2 (p_3 q_1 -p_1 q_3) & p_1 (p_3 q_1 -p_1 q_3) \\
p_3(p_1 q_2 -p_2 q_1) & p_2  (p_1 q_2 -p_2 q_1)  & p_1 (p_1 q_2 -p_2 q_1) \\
\end{bmatrix}. 
\]
Then we have the following estimate
\[
|p_i q_j - p_j q_i |\leq |q_i| |p_j -q_j| + |q_j| |p_i - q_i| \leq 2 |q|\, |p-q|.
\]
By symmetry of $p$ and $q$, 
\[
|p_i q_j - p_j q_i| \leq  2 \min \{|p|, |q| \} |p-q| \leq 2 \min\{p^0, q^0 \}|p-q|.  
\]
which implies that 
\[
\frac{1}{p^0 +1 } |p_k| |p_i q_j - p_j q_i|  \leq 2 \min\{ p^0, q^0\}  |p-q|. 
\]
Therefore 
\begin{equation}\label{SigSBound}
|\sigma_S (p, q)|^2 \leq c\,  \min\{(p^0)^2, (q^0)^2\} \, |p-q|^2.
\end{equation}
Combining with Lemma \ref{EstimateOfLamada} and the estimate \eqref{EstSigProof1},   we  obtain that 
\begin{multline}\notag
|\Sigma(p, q)|^2 
\\
\leq c  \Big(\min\{ (p^0)^3, (q^0)^3 \} |p-q|^{-1} \IN_A(p, q) + \min\{ (p^0)^2, (q^0)^2 \} \IN_{A^c}(p, q) \Big). 
\end{multline}
The bound \eqref{TriSig} follows directly from this estimate. 

\smallskip
Now we proceed to prove \eqref{LipSig}.  By symmetry, it suffices to control $|\Sigma(p, q) - \Sigma(\tilde p, q)|$. The other term has a similar estimate.

The derivative of $\sqrt{\Lambda}$ is given by 
\[
\partial_{p_j} \sqrt{\Lambda(p, q)} = - \frac{1}{2} \sqrt{\Lambda(p, q)} \, \left\{ \frac{1}{(p^0)^2} p_j + \frac{\rho^2 + 2 \rho + 3}{\rho (\rho +1)(\rho +2)} \left( \frac{q^0}{p^0} p_j - q_j \right)\right\}. 
\]
We perform similar estimates to those in the proof of \eqref{EstDerBarLam}.  In particular
$$
\left| \frac{\rho^2 + 2 \rho + 3}{\rho (\rho +1)(\rho +2)} \left( \frac{q^0}{p^0} p_j - q_j \right)\right|
\lesssim
\rho^{-1} \frac{q^0}{p^0}  |p-q|   
\lesssim
 \frac{  (q^0)^2 }{  |p-q|}. 
$$
Here we used that 
\[
 |q^0 p - p^0 q| \leq  2 q^0 |p-q|. 
\]
Then again following the proof of \eqref{EstDerBarLam}, we  estimate $|\nabla_p \sqrt{\Lambda(p, q)}|$ as 
\[
|\nabla_p \sqrt{\Lambda(p, q)}| \leq c \sqrt{\Lambda(p, q)} \, \left( \frac{1}{p^0} + \frac{ (q^0)^2}{|p-q|}\right). 
\]
We combine the estimate for $\sqrt{\Lambda}$ from Lemma \ref{EstimateOfLamada} and $\frac{1}{p^0} \leq 2 \frac{ q^0}{|p-q|}$ to obtain
\begin{multline}\label{EstDerSqrtLam}
 |\nabla_p \sqrt{\Lambda(p, q)}| 
 \\
\lesssim  \, \left\{  \IN_{A}(p, q)\, (q^0)^2  (p^0 q^0)^{1/4} |p-q|^{-5/2} + \IN_{A^c}(p, q) \,  (q^0)^2 \, |p-q|^{-2}\right\}
\end{multline}
We will use this estimate in a moment.

Now consider a function $\alpha(p, q)$, which denotes any entry of the matrix $\sigma_S(p, q)$ from \eqref{matrixS.eq}. Here we just consider the $(1, 1)$ entry of $\sigma_S$, {\em i.e. } 
\[
\alpha(p, q) = q^0  p_2 - p^0 q_2  - \frac{1}{p^0 +1} p_3 \left( p_2 q_3 - p_3 q_2 \right).   
\]
One can directly check that 
\begin{equation}\label{EstEntryDer}
|\nabla_p\, \alpha(p, q)| \leq c  \, q^0,  \quad |\nabla_q \alpha(p, q)| \leq c\,  p^0. 
\end{equation}
The above estimates also hold for any other entry of $\sigma_S$.

By the symmetry between $p$ and $\tilde p$, we have that
\[
\begin{split}
&|\Sigma(p, q) - \Sigma(\tilde p, q) | \\
& \leq  c    \min\{ \sup_\alpha |\alpha(p, q)|, \sup_\alpha |\alpha(\tilde p, q) |\}  |\sqrt{\Lambda(p, q)} - \sqrt{\Lambda(\tilde p, q)}| \\
& + c   \left( \sqrt{\Lambda(p, q)} + \sqrt{\Lambda(\tilde p, q) }\right) \sup_\alpha |\alpha(p, q) - \alpha( \tilde p, q)|. 
\end{split}
\]
Combining the estimate in Lemma \ref{EstimateOfLamada} with \eqref{SigSBound}, \eqref{EstDerSqrtLam} and \eqref{EstEntryDer} and  applying the same variant of the mean value theorem as in the proof of \eqref{MeanVlaue} we obtain 
\[
\begin{split}
&|\Sigma(p, q) - \Sigma(\tilde p, q) | \\
& 
\lesssim
 |p- \tilde{p}| \Bigg[   \left\{ (q^0)^3  (p^0 q^0)^{1/4} |p-q|^{-3/2} \, \IN_A (p, q) + \,  (q^0)^3 |p-q|^{-1}\IN_{A^c}(p,q)  \right\} \\
& \quad \quad +   \left\{  (q^0)^{3} (\tilde p^0 q^0)^{1/4} |\tilde p-q|^{-3/2} \IN_A(\tilde p, q)\,  +(q^0)^3 |\tilde p - q|^{-1} \IN_{A^c}(\tilde p,q) \right\} \\
& \qquad +    \left\{  q^0 (p^0 q^0)^{1/4} |p-q|^{-3/2} \IN_A(p, q) + \, q^0  |p -  q|^{-1}  \IN_{A^c}(p, q) \right\} \\
& \qquad +   \left\{  q^0 (\tilde p^0 q^0)^{1/4} |\tilde p-q|^{-3/2} \IN_A( \tilde p, q) \, +  q^0  |\tilde p -  q|^{-1}  \IN_{A^c}(\tilde p, q) \right\}  \Bigg]. 
\end{split}
\]
Now we use the pointwise estimates in Remark \ref{pointwise.remark} to simplify further as
\[
\begin{split}
&\quad |\Sigma(p, q) - \Sigma(\tilde p, q) |^2  
\\
&
\lesssim
 |p- \tilde{p}|^2 \Bigg[   \left\{ (q^0)^6  (p^0 q^0)^{1/2} |p-q|^{-3} \, \IN_A (p, q) + \,  (q^0)^5 \IN_{A^c}(p,q)  \right\} \\
& \quad \quad +   \left\{  (q^0)^{6} (\tilde p^0 q^0)^{1/2} |\tilde p-q|^{-3} \IN_A(\tilde p, q)\,  +(q^0)^5 \IN_{A^c}(\tilde p,q) \right\}  \Bigg].\\ 
\end{split}
\]
By the symmetry between $p$ and $q$, we also have 
\[
\begin{split}
&\quad |\Sigma( \tilde p, q) - \Sigma(\tilde p,\tilde  q) |^2  \\
& \leq c | q- \tilde{q}|^2  \Bigg[  \left\{  (\tilde p^0)^6( \tilde p^0 q^0)^{1/2} |\tilde p-q|^{-3} \IN_A(\tilde p, q) \, +  (\tilde p^0)^5 \IN_{A^c} (\tilde p, q)\right\}   \\
& \quad  +  \left\{  (\tilde p^0)^6( \tilde p^0 \tilde q^0)^{1/2} |\tilde p- \tilde q|^{-3} \IN_A(\tilde p, \tilde q) \, +  (\tilde p^0)^5 \IN_{A^c} (\tilde p, \tilde q)\right\} \Bigg]. 
\end{split}
\]
The proof of \eqref{LipSig} is complete after combining the last two estimates and using Remark \ref{pointwise.remark} again.  
\end{proof}

\section{Estimates of the integrals} \label{sec:EstInt}

In the previous section we gave a series of pointwise estimates for the relevant quantities in the relativistic Landau equation.  In this section we prove necessary estimates of integrals of these relevant coefficients.   We first recall the following lemma from \cite{MR2718931}:

\begin{lemma}[Lemma 4 in \cite{MR2718931}]\label{lemmaSUP} Let $\alpha \in ( -3, 0]$. There exists a constant $c_\alpha>0$, such that  for all $g \in L^\infty \cap L^1 $ and $\eps \in (0, 1]$, 
\begin{equation}\label{LemIntI}
\sup_{ p \in \mathbb{R}^3}   \int_{\mathbb{R}^3}  |p -q|^\alpha\,  g(q) \ud q   \leq \| g\|_{L^1} + c_\alpha \|g\|_{L^\infty}, 
\end{equation}
\begin{equation}\label{LemIntII}
\int_{\mathbb{R}^3} \int_{\mathbb{R}^3}  |p -q|^\alpha\,  g(p) g(q)  \ud p \ud q  \leq  \left( \| g\|_{L^1} + c_\alpha \|g\|_{L^\infty} \right) \| g\|_{L^1}, 
\end{equation}
\begin{equation}\label{LemIntIII}
 \int_{|\tilde p - q| \leq \eps } |p - q|^\alpha g(q) \ud q \leq c_\alpha \|g \|_{L^\infty} \eps^{3 + \alpha}.  
\end{equation}
We note that the constant $c_\alpha>0$ in \eqref{LemIntIII} is independent of $p$ and ${\tilde p}$.  

Furthermore, there exists a universal constant $c>0$ such that for all $g \in L^\infty \cap L^1$ and for all $\eps \in (0, 1]$, 
\begin{equation}\label{LemIntIV}
 \int_{|p-q| \geq \eps} |p - q|^{-3} g(q) \ud q \leq \| g\|_{L^1} + c \|g \|_{L^\infty} \log(1/\eps). 
\end{equation}
We note that the constant $c>0$ in \eqref{LemIntIV} is independent of $p$.  
\end{lemma}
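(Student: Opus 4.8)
The plan is to prove all four bounds with the single elementary device of splitting $\R^3$ into a \emph{near} zone, where $|p-q|$ is small and one uses $\|g\|_{L^\infty}$ together with the local integrability of $|z|^\alpha$ (which holds precisely because $\alpha>-3$), and a \emph{far} zone $\{|p-q|\ge 1\}$, where $|p-q|^\alpha\le 1$ since $\alpha\le 0$ and one uses $\|g\|_{L^1}$. Concretely, for \eqref{LemIntI} write $\int_{\R^3}|p-q|^\alpha g(q)\ud q=\int_{\{|p-q|\le 1\}}|p-q|^\alpha g(q)\ud q+\int_{\{|p-q|>1\}}|p-q|^\alpha g(q)\ud q$; the second piece is at most $\int_{\R^3}g\ud q=\|g\|_{L^1}$, and in the first piece $g(q)\le\|g\|_{L^\infty}$ gives the bound $\|g\|_{L^\infty}\int_{\{|z|\le 1\}}|z|^\alpha\ud z=\tfrac{4\pi}{\alpha+3}\|g\|_{L^\infty}$, so $c_\alpha=\tfrac{4\pi}{\alpha+3}$ works and is independent of $p$. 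Then \eqref{LemIntII} follows immediately by applying \eqref{LemIntI} for each fixed $p$ and integrating the resulting inequality against $g(p)\ud p$, using $\int_{\R^3}g(p)\ud p=\|g\|_{L^1}$. Likewise \eqref{LemIntIV} comes from splitting $\{|p-q|\ge\eps\}$ into $\{\eps\le|p-q|\le 1\}$ and $\{|p-q|>1\}$: the far part contributes $\le\|g\|_{L^1}$ exactly as before, and the annulus contributes $\le\|g\|_{L^\infty}\int_{\{\eps\le|z|\le 1\}}|z|^{-3}\ud z=4\pi\|g\|_{L^\infty}\log(1/\eps)$.

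The only estimate that requires a small extra idea is \eqref{LemIntIII}, because there the domain of integration is a ball $B(\tilde p,\eps)$ centered at $\tilde p$ while the weight is centered at a possibly different point $p$, and we want a bound uniform in both. Bounding $g\le\|g\|_{L^\infty}$, it suffices to prove $\int_{B(\tilde p,\eps)}|p-q|^\alpha\ud q\le c_\alpha\,\eps^{3+\alpha}$ for all $p,\tilde p$ with $c_\alpha$ depending only on $\alpha$. One sees this either by the bathtub principle — $B(p,\eps)$ is precisely the superlevel set $\{q:|p-q|^\alpha\ge\eps^\alpha\}$ and has the same Lebesgue measure as $B(\tilde p,\eps)$, so, since $q\mapsto|p-q|^\alpha$ is radially nonincreasing about $p$ when $\alpha\le 0$, the integral over $B(\tilde p,\eps)$ is at most the integral over $B(p,\eps)$, namely $\tfrac{4\pi}{\alpha+3}\eps^{3+\alpha}$ — or, more elementarily, by distinguishing the case $|p-\tilde p|\ge 2\eps$ (where $|p-q|\ge\eps$ throughout $B(\tilde p,\eps)$, so the integrand is $\le\eps^\alpha$ and the integral is $\le\tfrac{4}{3}\pi\eps^{3+\alpha}$) from the case $|p-\tilde p|<2\eps$ (where $B(\tilde p,\eps)\subset B(p,3\eps)$, reducing to the centered computation at radius $3\eps$).

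The main obstacle, such as it is, is thus precisely the uniformity in both $p$ and $\tilde p$ in \eqref{LemIntIII}: a careless bound there would cost a power of $\eps$, and the rearrangement (or two-case) argument above is what secures the sharp exponent $3+\alpha$. Everything else is routine bookkeeping for weakly singular kernels in $\R^3$, using only Fubini, polar coordinates, the monotonicity $|z|^\alpha\le 1$ for $|z|\ge 1$ with $\alpha\le 0$, and the hypotheses $\alpha\in(-3,0]$ and $\eps\in(0,1]$ to keep the constants finite; the argument is the $\R^3$ version of Lemma 4 in \cite{MR2718931}.
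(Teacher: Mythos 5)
Your proof is correct and complete. The paper itself omits the proof of this lemma (it states ``The proof is standard and hence we omit it'' and refers the reader to Fournier's original paper), so there is no in-paper argument to compare against; your near/far split for \eqref{LemIntI}, \eqref{LemIntII}, \eqref{LemIntIV} and the two-case (or rearrangement) argument for the off-center ball in \eqref{LemIntIII} is exactly the standard route, and you have correctly isolated the one non-routine point, namely securing the exponent $3+\alpha$ in \eqref{LemIntIII} uniformly in both $p$ and $\tilde p$.

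One small remark: in the rearrangement version, the identification of $B(p,\eps)$ as the superlevel set $\{q:|p-q|^\alpha\ge\eps^\alpha\}$ requires $\alpha<0$; the degenerate case $\alpha=0$ is of course trivial (the integrand is $\equiv 1$ and the bound is just the volume $\tfrac{4}{3}\pi\eps^3$), and your elementary two-case argument handles $\alpha\in(-3,0]$ uniformly without needing to single it out. Either way the constants you extract ($\tfrac{4\pi}{\alpha+3}$, or $\max\{\tfrac{4}{3}\pi,\tfrac{4\pi}{\alpha+3}3^{3+\alpha}\}$) are finite precisely under the hypothesis $\alpha>-3$, as required.
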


The proof is standard and hence we omit it. Interested readers can find the complete proof in \cite{MR2718931}.  We further recall Definition \ref{DefPsi} regarding the function $\Psi$ used again below.  We now state two crucial propositions. 

\begin{proposition} \label{PropEstIntegralSingle} Assume that $g \in \PP \cap L^\infty$ and $(q^0)^7 g (q) \in L^\infty \cap L^1$. Then 
\begin{equation}\label{EstSigIntegral}
\int_{\R^3} |\Sigma(p, q) - \Sigma( \tilde p, q)|^2 g(q) \ud q \leq C(g) \Psi( |p - \tilde{p}|^2), 
\end{equation}
\begin{equation} \label{EstBIntegral}
\int_{\R^3} |B(p, q) - B( \tilde p, q)| g(q) \ud q \leq C(g) \Psi( |p - \tilde{p}|), 
\end{equation}
where
\begin{equation} \label{const.def.Integral}
C(g) =c\left(   \| g \|_{L_7^\infty} +  \| g \|_{L_7^1} +1\right),
\end{equation}
with $c>0$ is a universal constant. 
\end{proposition}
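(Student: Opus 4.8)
The plan is to derive both inequalities purely from the pointwise estimates already established, namely Proposition \ref{PropSig} for $\Sigma$ (the trivial bound \eqref{TriSig} and the Lipschitz bound \eqref{LipSig}) and Proposition \ref{PropEstB} for $B$, together with the integral lemma (Lemma \ref{lemmaSUP}). Throughout I write $\delta \eqdef |p-\tilde p|$, and I note that specializing $\tilde q = q$ in \eqref{LipSig} and in Proposition \ref{PropEstB} gives, for every $q$,
\[
|\Sigma(p,q)-\Sigma(\tilde p,q)|^2 \le c\,\delta^2\,\bigl(\varphi_\Sigma^2(p,q)+\varphi_\Sigma^2(\tilde p,q)\bigr),\qquad
|B(p,q)-B(\tilde p,q)| \le c\,\delta\,\bigl(\varphi_B^2(p,q)+\varphi_B^2(\tilde p,q)\bigr).
\]
The two elementary facts I will lean on are: first, every momentum weight appearing in $\varphi_\Sigma^1,\varphi_\Sigma^2,\varphi_B^1,\varphi_B^2$ is $\le (q^0)^7$ --- for the $\IN_A$-pieces this uses $p^0\approx q^0$ on $A$, cf. \eqref{AProperty}, so that e.g. $(q^0)^6(p^0q^0)^{1/2}\lesssim(q^0)^7$ --- so that every integral against $g$ can be estimated by applying Lemma \ref{lemmaSUP} to $(q^0)^7 g(q)\in L^1\cap L^\infty$ with norms $\|g\|_{L^1_7}$, $\|g\|_{L^\infty_7}$; and second, $\Psi$ from Definition \ref{DefPsi} satisfies $\Psi(x)\ge x$ and $\Psi(x)\approx x\bigl(1+\log(1/x)\bigr)$ for $x\in(0,1]$, which is exactly what will absorb the logarithm produced below.

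First I would dispose of the regime $\delta\ge 1/3$ using only the trivial bounds \eqref{TriSig} and \eqref{TriB}: bounding the weights by $(q^0)^7$, discarding the $\IN_{A^c}$ restrictions, and applying \eqref{LemIntI} to $(q^0)^7 g$ with $\alpha=-1$ (for the $\Sigma$ inequality) and $\alpha=-2$ (for the $B$ inequality) yields a bound $\le c\,C(g)$; since $\Psi$ is increasing and strictly positive on $(0,\infty)$ one has $\Psi(\delta^2)\ge\Psi(1/9)>0$ and $\Psi(\delta)\ge\Psi(1/3)>0$, so the conclusion follows in this regime.

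The bulk of the work is the regime $\delta<1/3$ (hence $2\delta<1$). Here I decompose $\R^3=D_1\cup D_2\cup D_3$ with
\[
D_1=\{q:|p-q|\le 2\delta\},\qquad D_2=\{q:|\tilde p-q|\le 2\delta\}\setminus D_1,\qquad D_3=\{q:|p-q|>2\delta \text{ and } |\tilde p-q|>2\delta\}.
\]
On $D_1\cup D_2$ all of $|p-q|,|\tilde p-q|$ are $\le 3\delta<1$, hence both $(p,q)$ and $(\tilde p,q)$ lie in $A$ and the $\IN_{A^c}$-pieces of $\varphi_\Sigma^1,\varphi_B^1$ vanish there. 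On $D_1$ I apply \eqref{TriSig}/\eqref{TriB}, bound the weights by $(q^0)^7$, and invoke \eqref{LemIntIII} with $\eps=2\delta$ --- the estimate that tolerates the singular point ($p$ or $\tilde p$) differing from the center of the small ball --- with $\alpha=-1$ for $\Sigma$ and $\alpha=-2$ for $B$, producing contributions $\lesssim\|g\|_{L^\infty_7}\delta^2$ and $\lesssim\|g\|_{L^\infty_7}\delta$; the set $D_2$ is handled identically with $p$ and $\tilde p$ exchanged. On $D_3$ I use the Lipschitz bounds displayed above, bound the weights by $(q^0)^7$ (and the $\IN_{A^c}$-weight $(q^0)^5$ by $(q^0)^7$, integrating trivially against $g$ to get $\|g\|_{L^1_7}$), and apply \eqref{LemIntIV} to $(q^0)^7 g$ with $\eps=2\delta$ to the $|p-q|^{-3}\IN_A$ and $|\tilde p-q|^{-3}\IN_A$ pieces (using $|\tilde p-q|>2\delta$ on $D_3$ as well), obtaining $\int_{D_3}\bigl(\varphi^2_\bullet(p,q)+\varphi^2_\bullet(\tilde p,q)\bigr)g(q)\,\ud q\le C(g)\bigl(1+\log(1/\delta)\bigr)$, where $\varphi^2_\bullet$ is $\varphi^2_\Sigma$ for the first inequality and $\varphi^2_B$ for the second; multiplying by $c\delta^2$ (resp. $c\delta$) and using $\delta^2\bigl(1+\log(1/\delta)\bigr)\approx\Psi(\delta^2)$ (resp. $\delta\bigl(1+\log(1/\delta)\bigr)\approx\Psi(\delta)$) gives the $D_3$-bound. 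Summing the three pieces and using $\delta^2\le\Psi(\delta^2)$, $\delta\le\Psi(\delta)$ finishes the proof.

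The hard part is not any single estimate but the bookkeeping: one must track precisely which of $p^0,q^0$ occurs in each term, collapse all the minima, and confirm that the worst weight is exactly $(q^0)^7$ so that precisely the norms $\|g\|_{L^1_7},\|g\|_{L^\infty_7}$ in $C(g)$ suffice; and one must verify that the logarithmic blow-up emerging from \eqref{LemIntIV} on $D_3$ is matched --- not merely dominated --- by the logarithmic growth of $\Psi$ near the origin, which is the entire reason $\Psi$ is defined with that $\log$ correction. A minor point worth double-checking is that $D_1$ and $D_2$ are genuinely contained in $A$ in both senses once $\delta<1/3$, which is what legitimizes dropping the $\IN_{A^c}$-pieces of $\varphi_\Sigma^1,\varphi_B^1$ on those sets.
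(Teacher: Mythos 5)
Your proof is correct and follows essentially the same strategy as the paper's: decompose into a near-singularity region (where the trivial bound $\varphi^1$ plus Lemma \ref{lemmaSUP}\eqref{LemIntIII} applies), a far region (where the Lipschitz bound $\varphi^2$ plus Lemma \ref{lemmaSUP}\eqref{LemIntIV} produces the logarithm that is exactly absorbed by $\Psi$), and a large-$\delta$ regime handled by \eqref{LemIntI}. The only genuine differences are cosmetic: the paper cuts at threshold radius $|p-\tilde p|^2$ and writes a four-term indicator decomposition $I_1+\cdots+I_4$ (resp. $J_1+\cdots+J_4$), whereas you cut at $2|p-\tilde p|$ with three regions $D_1,D_2,D_3$; since $\Psi(\delta^2)\approx\delta^2\log(1/\delta)$ near zero, either choice of cutoff produces a logarithm that $\Psi$ matches up to constants, so both work. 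One small economy you could make: the observation that $D_1,D_2\subset A$ (so the $\IN_{A^c}$ pieces of $\varphi^1$ vanish there) is correct but unnecessary, because those pieces carry no singularity and \eqref{LemIntIII} with $\alpha=0$ bounds them by $\|g\|_{L^\infty_7}(2\delta)^3$, which is already dominated by $\Psi(\delta^2)$ (resp.\ $\Psi(\delta)$); the paper doesn't bother with the inclusion for exactly this reason.
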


\begin{proof}
We define by $I$ the left hand side of \eqref{EstSigIntegral}. Recall Proposition \ref{PropSig} which shows that
\begin{multline}\notag
|\Sigma(p, q) - \Sigma(\tilde p, q)|^2 \lesssim
 \min\left\{ \varphi_\Sigma^1(p, q) + \varphi_\Sigma^1( \tilde p, q), \right.
 \\
\left. |p - \tilde p|^2 ( \varphi_\Sigma^2(p, q) + \varphi_\Sigma^2( \tilde p, q) ) \right\} . 
\end{multline}
where   
\[
\varphi_\Sigma^1(p, q) \leq  (q^0 )^3 |p-q|^{-1} \IN_A(p, q) +  \, (q^0)^2 \IN_{A^c}(p, q), 
\]
\[
\varphi_\Sigma^2(p, q) \leq  (q^0)^7  |p-q|^{-3} \, \IN_A (p, q) + \,  (q^0)^6 \IN_{A^c}(p,q). 
\]
Hence 
\begin{multline}\notag
I \lesssim
 \IN_{\{|p - \tilde p| \geq 1\}} \int_{\R^3 } \left( \varphi_\Sigma^1(p, q) + \varphi_\Sigma^1( \tilde p, q)  \right) g(q) \ud q  
 \\
 +   \IN_{\{|p - \tilde p| \leq  1\}} |p- \tilde p|^2  \int_{\R^3 } \IN_{\{ |p - q| \geq  |p - \tilde p|^2,  |\tilde p  - q| \geq  |p - \tilde p|^2 \}}  \left( \varphi_\Sigma^2(p, q) + \varphi_\Sigma^2( \tilde p, q)  \right) g(q) \ud q 
 \\
 +   \IN_{\{|p - \tilde p| \leq  1\}} \int_{\R^3 }  \IN_{ \{ |p-q| \leq |p - \tilde p|^2 \} }  \left( \varphi_\Sigma^1(p, q) + \varphi_\Sigma^1( \tilde p, q)  \right) g(q) \ud q  
 \\
 +  \IN_{\{|p - \tilde p| \leq  1\}} \int_{\R^3 }  \IN_{ \{ | \tilde p-q| \leq |p - \tilde p|^2 \} }  \left( \varphi_\Sigma^1(p, q) + \varphi_\Sigma^1( \tilde p, q)  \right) g(q) \ud q  
 \\
 \lesssim \left( I_1 + I_2 + I_3 + I_4 \right). 
\end{multline}
First, by   \eqref{LemIntI} (with $\alpha =-1$),  for any $p \in \R^3$, 
\[
\begin{split}
 & \int_{\R^3} \varphi_\Sigma^1(p, q) g(q) \ud q \leq  \int_A  |p-q|^{-1} \left( (q^0)^3 g(q) \right) \ud q  + \int_{A^c} (q^0)^2 g(q) \ud q  \\
 & \leq c  \left( \| g \|_{L_3^\infty} +  \| g \|_{L_3^1} \right) \leq C(g),
 \end{split}
\]
which using the symmetry between $p$ and ${\tilde p}$ implies that
\[
I_1 \leq c \IN_{\{|p - \tilde p| \geq 1\}}  C(g)  \leq C(g) \Psi(|p- \tilde p|^2). 
\]
Next, using \eqref{LemIntIV} with $\eps = |p - \tilde p|^2$, 
\[
\begin{split}
I_2 & \leq \IN_{\{|p - \tilde p| \leq  1\}}  |p - \tilde p|^2 \bigg( \int_{\R^3 } \IN_{\{ |p - q| \geq  |p - \tilde p|^2 \}}  \varphi_\Sigma^2(p, q)  g(q) \ud q  \\
& + \int_{\R^3 } \IN_{\{ | \tilde p - q| \geq  | p - \tilde p|^2 \}}  \varphi_\Sigma^2( \tilde p, q)  g(q) \ud q \bigg) \\
&  \leq  2 \,  \IN_{\{|p - \tilde p| \leq  1\}}  |p - \tilde p|^2 \bigg( \int_{ |p - q| \geq  |p - \tilde p|^2 } |p -q|^{-3}  (q^0)^7 g(q) \ud q  \\
& + \int_{ |p - q| \geq  |p - \tilde p|^2 } (q^0)^6 g(q) \ud q \bigg)\\
& \leq C(g) \IN_{\{|p - \tilde p| \leq  1\}}  |p - \tilde p|^2 \left( 1 - \log |p- \tilde p|^2 \right)  \\
& \leq C(g) \Psi(|p - \tilde p|^2), 
\end{split}
\]
noting that $p$ and $\tilde p$ are exchangeable in the 2nd inequality. 

Finally, from \eqref{LemIntIII} with $\alpha=-1$ and $\eps= |p - \tilde p|^2$, we have 
\[
\begin{split}
I_3 + I_4 \leq &  2\,  \IN_{ \{|p-\tilde p | \leq 1 \}} \bigg( \int_{ |p- q| \leq |p - \tilde p|^2 } \left( |p-q|^{-1} + |\tilde p -q|^{-1} \right) (q^0)^3 g(q)  \ud q  \\
 & \qquad \quad +    \int_{|p-q| \leq |p - \tilde p|^2} (q^0)^2 g(q) \ud q \bigg) \\
 & \leq C(g) \IN_{\{ |p-\tilde p | \leq 1\} } (|p - \tilde p|^2)^{3-1} \leq C(g) \Psi(|p - \tilde p|^2).  
\end{split}
\]
Collecting these estimates yields \eqref{EstSigIntegral}.

We turn to the estimate of  \eqref{EstBIntegral}.  We then denote by $J$ the left hand side of \eqref{EstBIntegral}.   Further recall  Proposition \ref{PropEstB}, which says that
\begin{equation}\notag
|B(p, q) - B(\tilde p, q) |   \leq c \min\{ \varphi_B^1(p, q) + \varphi_B^1( \tilde p, q), 
   |p-\tilde p| (\varphi_B^2(p, q) + \varphi_B^2(\tilde p, q) ) \}, 
\end{equation}
with 
\[
\varphi_B^1(p, q) \leq |p-q|^{-2} q^0  \IN_A(p, q) +   \IN_{A^c}(p, q), 
\]
\[
\varphi_B^2(p, q) \leq |p-q|^{-3} (q^0)^3  \IN_A(p, q) +   \IN_{A^c}(p, q).
\]
Then one can proceed to estimate $J$ as what we have done  for $I$
\begin{multline}\notag
J  \lesssim \IN_{\{|p - \tilde p| \geq 1\}} \int_{\R^3 } \left( \varphi_B^1(p, q) + \varphi_B^1( \tilde p, q)  \right) g(q) \ud q  \\
 +   \IN_{\{|p - \tilde p| \leq  1\}} |p- \tilde p|  \int_{\R^3 } \IN_{\{ |p - q| \geq  |p - \tilde p|^2,  |\tilde p  - q| \geq  |p - \tilde p|^2 \}}  \left( \varphi_B^2(p, q) + \varphi_B^2( \tilde p, q)  \right) g(q) \ud q \\
 +   \IN_{\{|p - \tilde p| \leq  1\}} \int_{\R^3 }  \IN_{ \{ |p-q| \leq |p - \tilde p|^2 \} }  \left( \varphi_B^1(p, q) + \varphi_B^1( \tilde p, q)  \right) g(q) \ud q  \\
 +  \IN_{\{|p - \tilde p| \leq  1\}} \int_{\R^3 }  \IN_{ \{ | \tilde p-q| \leq |p - \tilde p|^2 \} }  \left( \varphi_B^1(p, q) + \varphi_B^1( \tilde p, q)  \right) g(q) \ud q  \\
 \lesssim \left( J_1 + J_2 + J_3 + J_4 \right). 
\end{multline}
Using \eqref{LemIntI} for $\alpha= -2$, we obtain 
\[
J_1 \leq c \left( \|g\|_{L^1} + \| q^0 g\|_{L^1} + \|q^0 g\|_{L^\infty} \right) \IN_{\{ |p - \tilde p| \geq 1 \}} \leq C(g) \Psi(|p -\tilde p|). 
\]
Again, \eqref{LemIntIV} with $\eps= |p - \tilde p|^2 $ yields
\[
\begin{split}
J_2 & \leq \IN_{\{|p - \tilde p| \leq  1\}}  |p - \tilde p| \bigg( \int_{\R^3 } \IN_{\{ |p - q| \geq  |p - \tilde p|^2 \}}  \varphi_B^2(p, q)  g(q) \ud q  \\
& + \int_{\R^3 } \IN_{\{ | \tilde p - q| \geq  | p - \tilde p|^2 \}}  \varphi_B^2( \tilde p, q)  g(q) \ud q \bigg) \\
&  \leq  2\,  \IN_{\{|p - \tilde p| \leq  1\}}  |p - \tilde p| \bigg( \int_{ |p - q| \geq  |p - \tilde p|^2 } |p -q|^{-3}  (q^0)^3 g(q) \ud q  \\
& + \int_{ |p - q| \geq  |p - \tilde p|^2 }  g(q) \ud q \bigg)\\
& \leq c \left(  \| g \|_{L^1} + \| (q^0)^3 g \|_{L^1} + \| (q^0)^3 g \|_{L^\infty} \right) \IN_{\{|p - \tilde p| \leq  1\}}  |p - \tilde p| \left( 1 - \log |p- \tilde p|^2 \right)  \\
& \leq C(g) \IN_{\{|p - \tilde p| \leq  1\}}  |p - \tilde p| \left( 1 - \log |p- \tilde p| \right) \\
& \leq C(g) \Psi(|p - \tilde p|). 
\end{split}
\]
Finally, we use \eqref{LemIntIII} with $\eps= |p - \tilde p|^2$ and $\alpha = -2$
\[
J_3 + J_4 \leq c \| q^0 g\|_{L^\infty} |p - \tilde p|^2 \IN_{\{ |p - \tilde p| \leq 1\}} \leq C(g) \Psi( |p - \tilde p|). 
\]
This completes the proof.
\end{proof}

In the next proposition we prove bounds for the integrals of differences of the quantity $B$ in  \eqref{BiDef}.

\begin{proposition}\label{EstDoubleB} Assume that $g, \tilde g \in \PP \cap L^\infty$ and also $(q^0)^3 g, (q^0)^3 \tilde g \in L^\infty \cap L^1$. Then for  any two couplings $Q, R \in \ProbJ(g, \tilde g)$, we have that
\begin{multline}\label{EstDoubleBEqu}
 \int_{\R^3 \times \R^3} \int_{\R^3 \times \R^3} |p- \tilde p| \cdot |B(p, q) - B(\tilde p, \tilde q)| Q(\ud p, \ud \tilde p) R(\ud q, \ud \tilde q) 
 \\
\leq \widetilde C(g, \tilde g)  \Psi\left( \int_{\R^3 \times \R^3} |p - \tilde p|^2 Q(\ud p, \ud \tilde p) \right) 
\\
+C(g, \tilde g)  \Psi\left( \int_{\R^3 \times \R^3} |q - \tilde q|^2 R(\ud q, \ud \tilde q) \right), 
\end{multline}
where using the constant defined in \eqref{const.def.Integral} we have that
\begin{align*}
\widetilde C(g, \tilde g) \eqdef C(g) + C(\tilde g) & = c \left( \| (q^0)^3 g \|_{L^\infty} 
+   \| q^0 g\|_{L^1}+1 \right) \\&  
+ c \left( \| (q^0)^3 \tilde g \|_{L^\infty} 
+ \| q^0 \tilde g\|_{L^1} +1\right),
\end{align*}
where $c>0$ is a universal constant. 
\end{proposition}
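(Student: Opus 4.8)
The plan is to reduce the left side of \eqref{EstDoubleBEqu}, call it $\mathcal I$, to the one–variable estimate \eqref{EstBIntegral} of Proposition \ref{PropEstIntegralSingle} together with Jensen's inequality \eqref{Jensen}, using repeatedly that $\Psi$ is increasing with $\Psi(x)\ge x$ and $x\,\Psi(x)\le\Psi(x^2)$, and that $\Psi$ is subadditive up to a universal constant (since $\Psi\approx\Theta$ and $\Theta$ is concave with $\Theta(0)=0$). Write $W_Q^2=\int|p-\tilde p|^2\,Q(\ud p,\ud\tilde p)$ and $W_R^2=\int|q-\tilde q|^2\,R(\ud q,\ud\tilde q)$. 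The first step is the triangle inequality
\[
|B(p,q)-B(\tilde p,\tilde q)|\le |B(p,q)-B(\tilde p,q)|+|B(\tilde p,q)-B(\tilde p,\tilde q)|,
\]
which splits $\mathcal I\le\mathcal I_A+\mathcal I_B$. The term $\mathcal I_A$ is immediate: the variable $\tilde q$ is absent, so integrating it out of $R$ leaves the $q$–marginal $g$, whence
\[
\mathcal I_A=\int |p-\tilde p|\,\Big(\int_{\R^3}|B(p,q)-B(\tilde p,q)|\,g(q)\,\ud q\Big)\,Q(\ud p,\ud\tilde p)\le C(g)\int|p-\tilde p|\,\Psi(|p-\tilde p|)\,Q\le 4\,C(g)\,\Psi(W_Q^2),
\]
using \eqref{EstBIntegral}, then $|p-\tilde p|\,\Psi(|p-\tilde p|)\le\Psi(|p-\tilde p|^2)$ and \eqref{Jensen}; here, as in the proof of \eqref{EstBIntegral}, $C(g)$ may be taken as $c\big(\|(q^0)^3 g\|_{L^\infty}+\|q^0 g\|_{L^1}+1\big)$.

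The term $\mathcal I_B$ is the crux. By the antisymmetry $B(\tilde p,q)=-B(q,\tilde p)$ of \eqref{BiDef}, $|B(\tilde p,q)-B(\tilde p,\tilde q)|=|B(q,\tilde p)-B(\tilde q,\tilde p)|$, whose common argument $\tilde p$ is exactly one of the variables on which the weight $|p-\tilde p|$ depends; this is what obstructs a direct appeal to \eqref{EstBIntegral}, and the plan is therefore to re-run the proof of \eqref{EstBIntegral} in the present four–variable situation. On the region $\{|p-\tilde p|\ge1\}\cup\{|q-\tilde q|\ge1\}$ we use the trivial bound $|B(q,\tilde p)-B(\tilde q,\tilde p)|\le c\,(\varphi_B^1(q,\tilde p)+\varphi_B^1(\tilde q,\tilde p))$ of Proposition \ref{PropEstB}, integrate $\varphi_B^1(q,\tilde p)$ against the $q$–marginal $g$ and $\varphi_B^1(\tilde q,\tilde p)$ against the $\tilde q$–marginal $\tilde g$ via \eqref{LemIntI} with $\alpha=-2$ (both uniformly in $\tilde p$), and bound $|p-\tilde p|\,\mathbf 1_{\{|p-\tilde p|\ge1\}}\le|p-\tilde p|^2$ and $|p-\tilde p|\,\mathbf 1_{\{|q-\tilde q|\ge1\}}\le|q-\tilde q|^2$; this contributes $\le\widetilde C(g,\tilde g)\,(W_Q^2+W_R^2)\le\widetilde C(g,\tilde g)\big(\Psi(W_Q^2)+\Psi(W_R^2)\big)$.

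On the remaining region $\{|p-\tilde p|<1,\ |q-\tilde q|<1\}$ one splits according to a regularization threshold $\eps$ built from both displacements (of the order $|p-\tilde p|\,|q-\tilde q|$): where $|q-\tilde p|$ and $|\tilde q-\tilde p|$ both exceed $\eps$ we use the Lipschitz branch $|B(q,\tilde p)-B(\tilde q,\tilde p)|\le c\,|q-\tilde q|\,(\varphi_B^2(q,\tilde p)+\varphi_B^2(\tilde q,\tilde p))$ of Proposition \ref{PropEstB}, and on the complementary near–diagonal set we use the trivial branch. The singular factors $|q-\tilde p|^{-3}$ and $|\tilde q-\tilde p|^{-3}$ inside $\varphi_B^2$ are then integrated only over $\{\,\cdot\ge\eps\,\}$, so \eqref{LemIntIV} converts them into $\log(1/\eps)\lesssim 1+\log\tfrac1{|p-\tilde p|}+\log\tfrac1{|q-\tilde q|}$, while the near–diagonal piece is handled by \eqref{LemIntIII} with $\alpha=-2$ and produces a remainder with a gain of $\eps$. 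Throughout, the powers of $q^0$ are held at the level $(q^0)^3$ by invoking $q^0\approx\tilde p^0$ on the set $A$ (Remark \ref{pointwise.remark}) and, before each use of Lemma \ref{lemmaSUP}, by Fubini to place the Riesz–type integral on a good density ($g$ or $\tilde g$) rather than on a measure one does not control in $L^\infty$. The net bound for this region is of the form $\widetilde C(g,\tilde g)\int |p-\tilde p|\,|q-\tilde q|\big(1+\log\tfrac1{|p-\tilde p|}+\log\tfrac1{|q-\tilde q|}\big)\,Q\,R$ plus lower–order remainders; since $|p-\tilde p|\,|q-\tilde q|\,(1+\log\tfrac1{|p-\tilde p|})\le |q-\tilde q|\,\Psi(|p-\tilde p|)\lesssim \Psi(|p-\tilde p|^2)+\Psi(|q-\tilde q|^2)$ (and symmetrically), integrating against $Q$ and $R$ and applying \eqref{Jensen} separately in each variable gives $\mathcal I_B\lesssim\widetilde C(g,\tilde g)\,\Psi(W_Q^2)+C(g,\tilde g)\,\Psi(W_R^2)$. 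Combining the estimates for $\mathcal I_A$ and $\mathcal I_B$ yields \eqref{EstDoubleBEqu}.

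The main obstacle, as already indicated, is the term $\mathcal I_B$: one must choose the regularization threshold $\eps$ so that it is simultaneously "clean" in the $Q$–variables and in the $R$–variables, confine the singular kernels to $\{\cdot\ge\eps\}$ where \eqref{LemIntIV} applies, keep the moment weights down to $(q^0)^3$ (via the $A/A^c$ splitting and Fubini), and distribute the $|p-\tilde p|\,|q-\tilde q|\log$–loss between $\Psi(W_Q^2)$ and $\Psi(W_R^2)$ using the monotonicity and quasi–subadditivity of $\Psi$. Everything else is the same Jensen/$\Psi$ bookkeeping that already appears in the proof of Proposition \ref{PropEstIntegralSingle}.
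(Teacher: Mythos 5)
Your triangle‑inequality split $\mathcal I\le\mathcal I_A+\mathcal I_B$ is a genuinely different decomposition from the paper's, which never applies the triangle inequality at the level of $K$; instead it inserts the full four‑variable bound of Proposition \ref{PropEstB} into $K$ directly and then partitions $\mathbb{R}^{12}$ into nine regions $\delta_1,\dots,\delta_9$ keyed to the thresholds $|p-\tilde p|^4$ and $|q-\tilde q|^4$. Your treatment of $\mathcal I_A$ is correct and pleasantly clean: since $\tilde q$ is absent you may marginalize $R$ to $g$, apply \eqref{EstBIntegral}, use $x\Psi(x)\le\Psi(x^2)$ and Jensen. The observation that $B$ is antisymmetric (since $\bar\Lambda$ is symmetric in its two arguments) and that the offending $\mathcal I_B$ can therefore be written with $\tilde p$ in the ``kernel'' slot is also a nice reformulation.

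The gap is in $\mathcal I_B$, which is the crux and which you only sketch. Your description of it as a ``re‑run'' of the proof of \eqref{EstBIntegral} materially understates the difficulty, for two reasons. First, after the Lipschitz bound you face integrals in which the inner $R(\ud q,\ud\tilde q)$ does \emph{not} decouple. The cleanest example is the near‑diagonal remainder $\int R(\ud q,\ud\tilde q)\,\IN_{\{|q-\tilde p|<\eps\}}\,\varphi_B^1(\tilde q,\tilde p)$: the localizing indicator constrains $q$ while the singularity sits at $\tilde q=\tilde p$, so neither marginalization nor a naive Fubini applies. The paper meets exactly this term (its $K_4$, ``2nd part'') with a H\"older interpolation with exponent $1/5$, costing fractional moments $(\tilde q^0)^{5/4}$ before Lemma \ref{lemmaSUP} is used with $\alpha=-5/2$; nothing like this appears in your outline, and without it the nearby‑singularity pieces of $\mathcal I_B$ do not close. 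Second, one cannot take the sup over $\tilde p$ first to obtain a uniform $\int R\,|B(q,\tilde p)-B(\tilde q,\tilde p)|\lesssim\Psi(W_R)$: the trivial‑branch terms $\int R\,|q-\tilde q|^{j}\,\varphi_B^1(q,\tilde p)$ with $j\in\{1,2\}$ do not decouple via Cauchy--Schwarz because $(\varphi_B^1)^2\sim|q-\tilde p|^{-4}$ is not locally integrable. So the weight $|p-\tilde p|$ really must be carried into the region splitting (as the paper does by attaching the weight $|p-\tilde p|^2$ or $|q-\tilde q|^2$ to each $\delta_i$), rather than factored out. Finally, your proposed product threshold $\eps\sim|p-\tilde p|\,|q-\tilde q|$ is a departure from the paper's $|p-\tilde p|^4$ / $|q-\tilde q|^4$ and is left unverified; the log‑loss inequality $ab(1-\log a-\log b)\lesssim\Psi(a^2)+\Psi(b^2)$ that you invoke does hold, but the assertion that the region decomposition actually produces that ``net bound'' is precisely the content that must be proved and is not a consequence of what you have written. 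In short, $\mathcal I_A$ is done, but $\mathcal I_B$ requires the same kind of multi‑case splitting and H\"older decoupling the paper carries out, and your sketch omits the steps where the real work lies.
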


\begin{proof}
We denote by $K$ the left hand side of \eqref{EstDoubleBEqu}, e.g.
$$
K 
\eqdef 
 \int_{\R^3 \times \R^3} \int_{\R^3 \times \R^3} |p- \tilde p| \cdot |B(p, q) - B(\tilde p, \tilde q)| Q(\ud p, \ud \tilde p) R(\ud q, \ud \tilde q), 
$$
and we denote  
\[
\delta(p, \tilde p, q, \tilde q) \eqdef |p - \tilde p | \cdot |B(p, q) - B(\tilde p, \tilde q)|. 
\]
By Proposition \ref{PropEstB}, we have
\begin{align*}
\delta \leq  & c \left( |p - \tilde p| + |q- \tilde q | \right) \min \Big\{ \varphi_B^1(p, q) +\varphi_B^1(\tilde p, \tilde q),   \\
& |p - \tilde p| (\varphi_B^2(p, q) + \varphi_B^2(\tilde p, q)) + |q-\tilde q| (\varphi_B^2(q, \tilde p) + \varphi_B^2(\tilde q, \tilde p)) \Big\}, 
\end{align*}
where $\varphi_B^1, \varphi_B^2$ are defined in \eqref{TriB} and \eqref{LipsB} respectively. We decompose the integral $K$ into integrals on different regions. For instance, if $|p-\tilde p| + |q- \tilde q| \geq 1$, we simply use the trivial bound 
\[
\delta \leq c \left( |p- \tilde p | + |q- \tilde q| \right) \left( \varphi_B^1(p, q) +\varphi_B^1(\tilde p, \tilde q)\right). 
\]
Otherwise, outside a small neighborhood of the critical singularity $|p-q|^{-3}$ appeared in $\varphi_B^2$, we use the second estimate in the minimmum above to obtain
\begin{align*}
\delta \leq c  & \left( |p - \tilde p| + |q- \tilde q | \right)  \Big( |p - \tilde p| (\varphi_B^2(p, q) + \varphi_B^2(\tilde p, q)) + \\ &|q-\tilde q| (\varphi_B^2(q, \tilde p)  + \varphi_B^2(\tilde q, \tilde p)) \Big) \\
\leq c & \left( |p -\tilde p|^2 + |q- \tilde q|^2 \right) \left( \varphi_B^2(p, q) + \varphi_B^2(\tilde p, q) + \varphi_B^2(\tilde q, \tilde p) \right). 
 \end{align*}
Near the singularity, one still uses the trivial bound. Indeed, we write 
\[
\delta_1(p, \tilde p, q, \tilde q) =   \IN_{ \{ |p-\tilde p| + |q - \tilde q| \geq 1\}}  \left( |p- \tilde p | + |q- \tilde q| \right) \left( \varphi_B^1(p, q) +\varphi_B^1(\tilde p, \tilde q)\right), 
\]
\begin{align*}
\delta_2(p, &\tilde p, q, \tilde q) =  \IN_{ \{ |p-\tilde p| + |q - \tilde q| \leq 1\}} \IN_{\{ |p-q| \geq |p - \tilde p|^4, |\tilde p -q| \geq |p- \tilde p|^4, |\tilde p - \tilde q| \geq |p - \tilde p|^4 \}}  \\
& \quad   \quad \quad \cdot   \left( |p -\tilde p|^2 \right)  \left( \varphi_B^2(p, q) + \varphi_B^2(\tilde p, q) + \varphi_B^2(\tilde q, \tilde p) \right) , 
\end{align*}
\begin{align*}
\delta_3(p, &\tilde p, q, \tilde q) =     \IN_{ \{ |p-\tilde p| + |q - \tilde q| \leq 1\}} \IN_{\{ |p-q| \geq |q - \tilde q|^4, |\tilde p -q| \geq |q- \tilde q|^4, |\tilde p - \tilde q| \geq |q - \tilde q|^4 \}} \\
& \quad   \quad \quad \cdot   \left( |q -\tilde q|^2 \right)  \left( \varphi_B^2(p, q) + \varphi_B^2(\tilde p, q) + \varphi_B^2(\tilde q, \tilde p) \right), 
\end{align*}
and further $\delta_4$ through $\delta_6$ are defined as 
\[
\delta_4(p, \tilde p, q, \tilde q) =  \IN_{ \{ |p-\tilde p| + |q - \tilde q| \leq  1\}}  \IN_{\{ |p-q| \leq |p- \tilde p|^4 \}}  \left( \varphi_B^1(p, q) +\varphi_B^1(\tilde p, \tilde q)\right), 
\]
\[
\delta_5(p, \tilde p, q, \tilde q) =  \IN_{ \{ |p-\tilde p| + |q - \tilde q| \leq  1\}}  \IN_{\{ | \tilde p-q| \leq |p- \tilde p|^4 \}}  \left( \varphi_B^1(p, q) +\varphi_B^1(\tilde p, \tilde q)\right), 
\]
\[
\delta_6(p, \tilde p, q, \tilde q) =  \IN_{ \{ |p-\tilde p| + |q - \tilde q| \leq  1\}}  \IN_{\{ | \tilde p- \tilde q| \leq |p- \tilde p|^4 \}}  \left( \varphi_B^1(p, q) +\varphi_B^1(\tilde p, \tilde q)\right), 
\]
and $\delta_7, \delta_8, \delta_9$ can be defined similarly as $\delta_4, \delta_5, \delta_6$ but now  the 2nd indicator functions are  $\IN_{\{|p-q| \leq |q- \tilde q|^4 \}}$,  $\IN_{\{ | \tilde p-q| \leq |q- \tilde q|^4 \}}  $ and $\IN_{\{ | \tilde p- \tilde q| \leq | q- \tilde q|^4 \}}  $ respectively.  Indeed, we have that
\[
\delta_7(p, \tilde p, q, \tilde q) =  \IN_{ \{ |p-\tilde p| + |q - \tilde q| \leq  1\}}  \IN_{\{ |p-q| \leq |q- \tilde q|^4 \}}  \left( \varphi_B^1(p, q) +\varphi_B^1(\tilde p, \tilde q)\right), 
\]
\[
\delta_8(p, \tilde p, q, \tilde q) =  \IN_{ \{ |p-\tilde p| + |q - \tilde q| \leq  1\}}  \IN_{\{ | \tilde p-q| \leq |q- \tilde q|^4 \}}  \left( \varphi_B^1(p, q) +\varphi_B^1(\tilde p, \tilde q)\right), 
\]
\[
\delta_9(p, \tilde p, q, \tilde q) =  \IN_{ \{ |p-\tilde p| + |q - \tilde q| \leq  1\}}  \IN_{\{ | \tilde p- \tilde q| \leq |q- \tilde q|^4 \}}  \left( \varphi_B^1(p, q) +\varphi_B^1(\tilde p, \tilde q)\right). 
\]
We will decompose $K$ in terms of all of these decompositions above.

Thus we obtain
\[
K \leq c \sum_{i=1}^9 K_i, 
\]
where
\[
K_i = \int_{\R^3 \times \R^3}  \int_{\R^3 \times \R^3} \delta_i(p, \tilde p, q, \tilde q ) Q( \ud p, \ud \tilde p) R( \ud q, \ud \tilde q). 
\]
We will estimate each term individually

\smallskip
First, we have the estimate
\begin{align*}
\delta_1(p, \tilde p, q, \tilde q) & \leq    \left( |p- \tilde p | + |q- \tilde q| \right)^2 \left( \varphi_B^1(p, q) +\varphi_B^1(\tilde p, \tilde q)\right) \\
& \leq 2 \left( |p - \tilde p|^2 + |q - \tilde q|^2 \right) \left( \varphi_B^1(p, q) +\varphi_B^1(\tilde p, \tilde q)\right). 
\end{align*}
Consequently, 
\begin{align*}
K_1 & \leq 2 \int_{\R^3 \times \R^3}  |p-\tilde p|^2 Q(\ud p, \ud \tilde p)  \int_{\R^3 \times \R^3} \left( \varphi_B^1(p, q) +\varphi_B^1(\tilde p, \tilde q) \right) R( \ud q, \ud \tilde q) \\ 
& +   2 \int_{\R^3 \times \R^3}  |q-\tilde q|^2 R(\ud q, \ud \tilde q)  \int_{\R^3 \times \R^3} \left( \varphi_B^1(p, q) +\varphi_B^1(\tilde p, \tilde q) \right) Q(\ud p, \ud \tilde p )
\end{align*}
Since $R$ has marginals  $g$ and $\tilde g$,   we have the estimate
\begin{align*}
 & \int_{\R^3 \times \R^3} \varphi_B^1(p, q) R(\ud q, \ud \tilde q) = \int_{\R^3 \times \R^3} \varphi_B^1(p, q) g(q) \ud q  \\
 & \leq \int_{\R^3 \times \R^3} q^0 |p-q|^{-2} \IN_A(p, q) g(q) \ud q + \int_{\R^3 \times \R^3}  \IN_{A^c}(p, q) g(q) \ud q \\
& \leq   C \left( \|q^0 g\|_{L^\infty} + \| q^0 g\|_{L^1} \right) + \|g\|_{L^1} \leq C(g),  
\end{align*}
which is deduced from \eqref{LemIntI} with $\alpha=-2$. Similarly, 
\[
\int_{\R^3 \times \R^3} \varphi_B^1(\tilde p, \tilde q) R( \ud q, \ud \tilde q) \leq C( \tilde g), 
\]
and
\[
\int_{\R^3 \times \R^3} \left( \varphi_B^1(p, q) +\varphi_B^1(\tilde p, \tilde q) \right) Q(\ud p, \ud \tilde p ) \leq C(g)  + C(\tilde g). 
\]
Thus we obtain
\begin{multline*}
K_1 \leq  C(g, \tilde g)   \left\{   \int_{\R^3 \times \R^3}  |p-\tilde p|^2 Q(\ud p, \ud \tilde p) +  \int_{\R^3 \times \R^3}  |q-\tilde q|^2 R(\ud q, \ud \tilde q) \right\} 
\\
 \leq  C(g, \tilde g)    \Psi\left( \int_{\R^3 \times \R^3}  |p-\tilde p|^2 Q(\ud p, \ud \tilde p) \right)
 \\
  + C(g, \tilde g)   \Psi\left( \int_{\R^3 \times \R^3}  |q-\tilde q|^2 R(\ud q, \ud \tilde q) \right),
\end{multline*}
by simply using that $x \leq \Psi(x)$ for any $x \geq 0$. 

Now we estimate $K_2$. Note that 
\begin{align*}
\delta_2(p, \tilde p, q, \tilde q)  & \leq  \IN_{ \{ |p-\tilde p| \leq 1\}} \IN_{\{ |p-q| \geq |p - \tilde p|^4\}}    |p -\tilde p|^2  \varphi_B^2(p, q)\\
&   +  \IN_{ \{ |p-\tilde p| \leq 1\}} \IN_{\{ |\tilde p-q| \geq |p - \tilde p|^4\}}    |p -\tilde p|^2  \varphi_B^2( \tilde p, q) \\
& + \IN_{ \{ |p-\tilde p| \leq 1\}} \IN_{\{ | \tilde p- \tilde q| \geq |p - \tilde p|^4\}}    |p -\tilde p|^2  \varphi_B^2(\tilde q, \tilde p) \\
&  \eqdef  \sum_{i=1}^3\delta_{2, i}(p, \tilde p, q, \tilde q).  
\end{align*}
Set $K_2 = \sum_{i=1}^3 K_{2, i}$, with $K_{2, i} =\int_{\R^3 \times \R^3} \int_{\R^3 \times \R^3} \delta_{2, i}\,  Q(\ud p, \ud \tilde p) R(\ud q, \ud \tilde q)$. 
Hence, \eqref{LemIntIV} with $\eps= |p- \tilde p|^4$ yields 
\begin{multline*}
 K_{2, 1} \leq \int_{\R^3 \times \R^3} Q(\ud p, \ud \tilde p) \IN_{ \{ |p-\tilde p| \leq 1\}} |p - \tilde p|^2  \int_{ |p -q| \geq |p - \tilde p|^4} \varphi_B^2(p, q) R(\ud q, \ud \tilde q) \\
 \leq   \int_{\R^3 \times \R^3} Q(\ud p, \ud \tilde p) \IN_{ \{ |p-\tilde p| \leq 1\}} |p - \tilde p|^2   \\
 \cdot \left(  \int_{ |p -q| \geq |p - \tilde p|^4} |p-q|^{-3} (q^0)^3 g(q) \ud q  +  \int_{ |p -q| \geq |p - \tilde p|^4}  g(q)  \ud q \right), 
\end{multline*}
which leads to
\begin{multline*}
 K_{2, 1}
 \leq \int_{\R^3 \times \R^3} Q(\ud p, \ud \tilde p) \IN_{ \{ |p-\tilde p| \leq 1\}} |p - \tilde p|^2 
\\
 \quad \cdot \left( \| (q^0)^3 g\|_{L^1} + c \|(q^0)^3 g\|_{L^\infty} \log \frac{1}{|p-\tilde p|^4}  + \|g \|_{L^1}\right). 
\end{multline*}
Thus we conclude that
\begin{multline*}
 K_{2, 1} \leq C(g)  \int_{\R^3 \times \R^3}  Q(\ud p, \ud \tilde p) \IN_{ \{ |p-\tilde p| \leq 1\}} |p - \tilde p|^2 \left(1- \log |p-\tilde p|^2 \right) \\
  \leq C(g)  \int_{\R^3 \times \R^3} \Psi(|p - \tilde p|^2 ) Q( \ud p, \ud \tilde p). 
\end{multline*}
Jensen's inequality, in particular  \eqref{Jensen}, gives us 
\[
K_{2, 1} \leq C(g) \Psi\left( \int_{\R^3 \times \R^3} |p - \tilde p|^2 Q(\ud p, \ud \tilde p) \right). 
\]
The terms $K_{2, 2}$ and $K_{2, 3}$ can be bounded similarly by symmetry. Moreover, $K_3$ can be bounded symmetrically also just by exchanging the role of $p\  ( \tilde p)$ and $q \ (\tilde q)$. 

Now we proceed to estimate $K_4, K_5$ and $K_6$. Note that $K_7, K_8$ and $K_9$ can be bounded similarly to $K_4, K_5$ and $K_6$ . Note further that 
\[
\delta_4(p, \tilde p, q, \tilde q) \leq  \IN_{ \{ |p-\tilde p|  \leq  1\}}  \IN_{\{ |p-q| \leq |p- \tilde p|^4 \}}  \left( \varphi_B^1(p, q) +\varphi_B^1(\tilde p, \tilde q)\right). 
\]
Hence 
\[
\begin{split}
K_4 \leq \int_{\R^3 \times \R^3 } Q( \ud p, \ud \tilde p) &\IN_{|p - \tilde p| \leq 1} \bigg( \int_{|p-q| \leq |p - \tilde p|^4 } \varphi_B^1(p, q) g(q) \ud q  \\
 & +  \int_{|p-q| \leq |p - \tilde p|^4 } \varphi_B^1(\tilde p,  \tilde q) R( \ud q, \ud \tilde q) \bigg).  \\
\end{split}
\]
For the 1st part, using \eqref{LemIntIII} with $\alpha =0$, $\alpha = -2$ and $\eps= |p- \tilde p|^4$, 
\begin{align*}
& \int_{|p-q| \leq |p - \tilde p|^4 } \varphi_B^1(p, q) g(q) \ud q  \\ & \leq  \int_{|p-q| \leq |p - \tilde p|^4 } \left( q^0 |p-q|^{-2} \IN_A(p, q) + \IN_{A^c}(p, q)    \right) g(q) \ud q \\
& \leq  c \|q^0 g \|_{L^\infty} |p- \tilde p|^4  + c \| g\|_{L^\infty} |p - \tilde p|^{12} \leq C(g) |p- \tilde p|^2.
\end{align*}
For the 2nd part, it is a little bit more complicated for instance we must single out variable $\tilde  q $ first by H\"older inequality. Indeed, we have
\begin{align*}
& \int_{|p-q| \leq |p - \tilde p|^4 } \varphi_B^1(\tilde p,  \tilde q) R( \ud q, \ud \tilde q)  \\
& \leq \int_{|p-q| \leq |p - \tilde p|^4 } |\tilde p - \tilde q|^{-2} \tilde{q}^0 R( \ud q, \ud \tilde q) +  \int_{|p-q| \leq |p - \tilde p|^4 } R( \ud q, \ud \tilde q), 
\\
& \leq \left( \int_{|p-q| \leq |p - \tilde p|^4 }  R( \ud q, \ud \tilde q) \right)^{1/5}  \left( \int_{\R^3 \times \R^3}  |\tilde p - \tilde q|^{- 5/2} (\tilde q^0)^{5/4} R( \ud q, \ud \tilde q) \right)^{4/5} \\
& \quad + \int_{|p-q| \leq |p - \tilde p|^4 } R( \ud q, \ud \tilde q) \\
& \leq c \|g\|_{L^\infty}^{1/5} |p - \tilde p |^{12/5} \left(  c \| (\tilde q^0)^{5/4} \tilde g \|_{L^\infty}  +  \| (\tilde q^0)^{5/4} \tilde g \|_{L^1} \right)^{4/5} +  c \|g\|_{L^\infty} |p - \tilde p |^{12}\\
& \leq \widetilde C(g, \tilde g)  | p - \tilde p |^2.
\end{align*}
Above we used \eqref{LemIntI} with $\alpha = 5/2$.
Combining those two parts, one has 
\[
\begin{split}
& K_4 \leq \widetilde C(g, \tilde g) \int_{\R^3 \times \R^3 } Q( \ud p, \ud \tilde p)  \IN_{|p - \tilde p| \leq 1} |p - \tilde p|^2  \\
& \leq \widetilde C(g, \tilde g) \int_{\R^3 \times \R^3 } \Psi\left( |p - \tilde p|^2 \right)Q(\ud p, \ud \tilde p) \\
& \leq \widetilde C(g, \tilde g) \Psi\left(\int_{\R^3 \times \R^3 } |p - \tilde p|^2 Q(\ud p, \ud \tilde p) \right) . 
\end{split}
\]
All the remaining terms can be estimated similarly because of symmetry. This completes the proof. 
\end{proof}

\section{A generalized Gronwall inequality} \label{gronwall.section}

In this section we recall a known generalization of the Gronwall lemma, which will be crucial to conclude our uniqueness argument. 

\begin{lemma} \label{GronwallPsi}Let $T >0$, $\gamma, \, \rho: [0, T] \to [0, \infty)$ and $\gamma \in L^1([0, T])$, $\rho \in L^\infty([0, T])$. Assume further  that 
\[
\rho(t) \leq \rho(0) + \int_0^t \gamma(s) \Psi(\rho(s)) \ud s. 
\]
where we recall  $\Psi$ which is defined in Definition \ref{DefPsi}. \\
i) If $\rho(0) =0$, then $\rho(t) = 0$ for all $t \in [0, T]$. \\
ii) If $\rho(0) >0$, then 
\[
\int_{\rho(0)}^{\rho(t) } \frac{1}{\Psi(y)} \ud y \leq \int_0^t \gamma(s) \ud s, \quad \mbox{for any}\  t \in [0, T]. 
\]
\end{lemma}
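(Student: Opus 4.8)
The plan is to treat this as a standard Osgood–Bihari type argument, the key analytic input being that $\Psi$ satisfies the Osgood condition $\int_{0^+} \ud y/\Psi(y) = +\infty$. First I would record this divergence: near $y=0$ one has $\Psi(y) = y(1-\log y)$, so the substitution $u = 1-\log y$ turns $\int_0^{1/2} \ud y/\Psi(y)$ into $\int_{1+\log 2}^{\infty} \ud u/u = +\infty$. I would also note that $\Psi$ is continuous, strictly increasing on $[0,\infty)$, with $\Psi(0)=0$ and $\Psi(y)>0$ for $y>0$, so that $1/\Psi$ is continuous (hence bounded) on every interval $[\delta,M] \subset (0,\infty)$. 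These are the only properties of $\Psi$ used.

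For part (ii), set $R(t) = \rho(0) + \int_0^t \gamma(s)\Psi(\rho(s))\ud s$. Since $\rho \in L^\infty$ and $\Psi$ is continuous, $\gamma\,\Psi(\rho) \in L^1([0,T])$, so $R$ is absolutely continuous and nondecreasing, with $R(0) = \rho(0) > 0$ and hence $R(t) \ge \rho(0) > 0$ for all $t$. By hypothesis $\rho(t) \le R(t)$, and since $\Psi$ is increasing, $R'(t) = \gamma(t)\Psi(\rho(t)) \le \gamma(t)\Psi(R(t))$ for a.e.\ $t$. The primitive $G(y) = \int_{\rho(0)}^y \ud w/\Psi(w)$ is $C^1$ on $[\rho(0),\infty)$ with locally bounded derivative, so $G\circ R$ is absolutely continuous with $(G\circ R)'(t) = R'(t)/\Psi(R(t)) \le \gamma(t)$ a.e.; integrating from $0$ to $t$ gives $\int_{\rho(0)}^{R(t)} \ud y/\Psi(y) \le \int_0^t \gamma(s)\ud s$. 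Finally, if $\rho(t) \ge \rho(0)$ then monotonicity of $G$ together with $\rho(t) \le R(t)$ yields the claimed bound; and if $\rho(t) < \rho(0)$ the left side is $\le 0 \le \int_0^t \gamma(s)\ud s$, so the inequality holds in either case.

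For part (i), where $\rho(0) = 0$, I would run the same computation with a vanishing regularization. Fix $\eps>0$ and set $R_\eps(t) = \eps + \int_0^t \gamma(s)\Psi(\rho(s))\ud s$, so that $R_\eps(t) \ge \eps > 0$ and $\rho(t) \le \rho(0) + \int_0^t \gamma\,\Psi(\rho)\,\ud s = R_\eps(t) - \eps \le R_\eps(t)$. Exactly as in part (ii) this gives $\int_\eps^{R_\eps(t)} \ud y/\Psi(y) \le \int_0^t \gamma(s)\ud s \le \|\gamma\|_{L^1([0,T])}$. Writing $g(t) = \int_0^t \gamma\,\Psi(\rho)\,\ud s$ so that $R_\eps(t) = \eps + g(t)$, I let $\eps \to 0^+$: if $g(t_0) > 0$ for some $t_0 \in [0,T]$, then by monotone convergence $\int_\eps^{\eps+g(t_0)} \ud y/\Psi(y) \to \int_0^{g(t_0)} \ud y/\Psi(y) = +\infty$ by the Osgood divergence, contradicting the $\eps$-uniform bound $\|\gamma\|_{L^1([0,T])} < \infty$. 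Hence $g\equiv 0$ on $[0,T]$, and therefore $\rho(t) \le \rho(0) + g(t) = 0$, i.e.\ $\rho \equiv 0$.

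The only genuinely delicate point is the measure-theoretic bookkeeping — justifying that $t \mapsto G(R(t))$ (resp.\ $t\mapsto \int_\eps^{R_\eps(t)} \ud y/\Psi(y)$) is absolutely continuous with the stated a.e.\ derivative. This is safe because $R$ (resp.\ $R_\eps$) is absolutely continuous and takes values in a compact subinterval of $(0,\infty)$ on which $1/\Psi$ is continuous, so the composition rule for a $C^1$ outer function with an absolutely continuous inner function applies directly. Everything else reduces to the Osgood condition for $\Psi$ together with its monotonicity, and no other structure of $\Psi$, $\gamma$, or $\rho$ is needed.
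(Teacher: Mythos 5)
Your proof is correct and arrives at the stated conclusions, but it takes a genuinely different (and in places tighter) route than the paper's argument. For part (ii), the paper invokes the Cauchy--Lipschitz theory to produce a solution $\bar\rho$ of the separable ODE $\bar\rho'=\gamma\,\Psi(\bar\rho)$, $\bar\rho(0)=\rho(0)$, asserts without detail a comparison principle that this $\bar\rho$ dominates any $\rho$ satisfying the integral inequality, and then separates variables. You instead work directly with the absolutely continuous majorant $R(t)=\rho(0)+\int_0^t\gamma\,\Psi(\rho)\ud s$: the monotonicity of $\Psi$ gives $R'\le\gamma\,\Psi(R)$ a.e., and the chain rule for the $C^1$ outer function $G(y)=\int_{\rho(0)}^y \ud w/\Psi(w)$ composed with the AC inner function $R$ (valid since $R$ stays in a compact subinterval of $(0,\infty)$) gives the bound by integration. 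This bypasses both the Picard existence machinery and the unproven comparison step, buying a more elementary and more fully justified argument. For part (i), the paper uses a contradiction involving $\bar t=\sup\{t:\rho(t)=0\}$ and a sequence $t_n\downarrow\bar t$ with $\rho(t_n)\to0$ -- an argument that leans on continuity and monotonicity of the dominating solution and is stated rather loosely given that $\rho$ itself is only assumed $L^\infty$. Your $\eps$-regularization ($R_\eps=\eps+\int_0^t\gamma\,\Psi(\rho)\ud s$, then $\eps\to0^+$ against the Osgood divergence $\int_{0^+}\ud y/\Psi(y)=+\infty$) avoids any sup-of-zero-set bookkeeping and is cleaner. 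Both proofs ultimately rest on the Osgood condition for $\Psi$, which you make explicit via the substitution $u=1-\log y$; the paper uses it implicitly at the end of part (i). No gaps in your argument.
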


We include the proof of this lemma for the sake of completeness.

\begin{proof} Since $\Psi$ is increasing,  the upper bound for $\rho(t)$ is given by the solutions (if any) to the integral equation
\begin{equation}\label{GronIne}
\rho(t) = \rho(0) + \int_0^t \gamma(s) \Psi(\rho(s)) \ud s. 
\end{equation}
which is equivalent  to the Cauchy problem of the  differential equation 
\[
\rho'(t) = \gamma(t) \Psi(\rho(t)), \quad \rho(t=0) = \rho(0). 
\]
Note that if $\rho(0) >0$, then  $\Psi(x)$ is locally Lipschitz on $x>0$. The Cauchy problem above actually has a  unique solution at least in a small interval $[0, \tau]$, by the Cauchy-Lipschitz theory. Indeed, we have that
\[
\int_{\rho(0) }^{\rho(t) } \frac{\ud y  }{ \Psi(y ) } = \int_0^t \gamma (s) \ud  s. 
\]
Note that $\rho$ is non-decreasing since $\gamma$ is non-negative. So we can keep extending  this solution $\rho = \rho(t)$ to any $t \leq T$, since $ \sup_{x \geq \rho(0)}\Psi'(x) \leq C $.   Going back to the inequality in this lemma, we obtain part ii) of this lemma. 

Now we prove part i). We can follow the proof of part ii), for instance the upper bound for $\rho(t)$ is again given by the solution to the integral equation 
\[
\rho(t) = \int_0^t \gamma(s) \Psi(\rho(s) ) \ud s. 
\]
Let $\bar t = \sup\{ t \in [0, T] \vert \rho(t) =0\}$. If $\bar t =T$, then the proof is finished. 

We therefore assume that $\bar t < T$ and prove i) by contradiction. So we pick a sequence of times $t_n > t_{n+1} \to \bar t$ as $n \to \infty$, with each $ \rho(t_n) >0$ but $\rho(t_n) > \rho(t_{n+1}) \to 0$. By the proof of part ii), one has the estimate 
\[
\int_{\rho(t_n)}^{\rho(T) } \frac{1}{\Psi(y) } \ud y \leq \int_{t_n}^T \gamma(s) \ud s \leq \int_0^t \gamma(s) \ud s < \infty. 
\]
However, with fixed $\rho(T) >0$ but sending $\rho(t_n) \to 0$, 
\[
\int_{\rho(t_n)}^{\rho(T) } \frac{1}{\Psi(y) } \ud y \to +\infty, \quad \mbox{as }\  n \to \infty. 
\]
 This is a contradiction. This completes the proof of part i). 
\end{proof}

\section{The main integral inequality }\label{integral.ineq.section}

This  section is devoted to the proof Proposition \ref{PropIntegral}.  To this end we will use the following proposition.

\begin{proposition}[Uniqueness of the coupled SDEs \eqref{SDE1} and \eqref{SDE2}] \label{PropUniquenessSDEs}  There exists a unique pair $(P_t)_{t \in [0, T]}$, $(\tilde P_t)_{t \in [0, T]}$ of continuous $(\mathcal{F}_t)_{t \in [0, T]}-$adapted processes solving the coupled SDEs \eqref{SDE1} and \eqref{SDE2}. In particular, for any $t \in [0, T]$, $\mbox{Law}(P_t) = F_t$ and $\mbox{Law}(\tilde P_t) = \tilde F_t$. 
\end{proposition}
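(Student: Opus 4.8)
The plan is to exploit the fact that, once the weak solutions $(F_t)_{t\in[0,T]}$ and $(\tilde F_t)_{t\in[0,T]}$ are fixed, the coupled system \eqref{SDE1}--\eqref{SDE2} is really a pair of \emph{classical} (though singular) It\^o SDEs, and then to run a martingale‑problem argument fed by the pointwise bounds of Section~\ref{sec:PointEst}. First I would record the reduction already sketched around \eqref{LandauProbabilistic}: the stochastic integral in \eqref{SDE1} is a continuous martingale with quadratic covariation $\int_0^t a_{F_s}(P_s)\,\ud s$, where $a_{F_s}(p)\eqdef\int_{\R^3}\Phi(p,q)F_s(q)\,\ud q$, because $R_s$ has first marginal $F_s$ and $\Sigma\Sigma^\top=\Phi$ (Proposition~\ref{CoeffBM}); hence \eqref{SDE1} is equivalent to $\ud P_t=\Sigma_{F_s}(P_s)\,\ud B_s+b_{F_s}(P_s)\,\ud s$ with $b_{F_s}(p)\eqdef\int_{\R^3}B(p,q)F_s(q)\,\ud q$ and $\Sigma_{F_s}\Sigma_{F_s}^\top=a_{F_s}$, and similarly \eqref{SDE2} reduces to the analogue with $\tilde F_s$. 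Using Lemma~\ref{lem:EstOfPhi}, Lemma~\ref{lem:BTrivial} and Lemma~\ref{lemmaSUP} (with $\alpha=-1$, resp. $\alpha=-2$, applied to $q^0F_s$), the maps $p\mapsto a_{F_s}(p)$ and $p\mapsto b_{F_s}(p)$ are continuous and bounded uniformly in $p$ by a constant that is integrable in $s\in[0,T]$, and $a_{F_s}(p)$ is strictly positive definite at every $p$ because $F_s$ has a density while $\xi^\top\Phi(p,q)\xi$ vanishes only on the lower‑dimensional set where $\xi$ is parallel to $\frac{p}{p^0}-\frac{q}{q^0}$ (Proposition~\ref{SpecOfPhi}); the same holds for $a_{\tilde F_s},b_{\tilde F_s}$.

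For existence, a solution of the martingale problem for $\mathcal L_{F_s}\varphi(p)=\frac12a_{F_s}^{ij}(p)\partial^2_{ij}\varphi(p)+b_{F_s}^i(p)\partial_i\varphi(p)$ whose one–dimensional marginal flow is $(F_t)$ is furnished directly by the superposition principle for Fokker--Planck equations applied to the weak solution $(F_t)$, since the weak formulation \eqref{weakform} is precisely the statement that $(F_t)$ solves $\partial_t g=\mathcal L_{F_s}^{\ast}g$ with $g_0=F_0$. Realizing this martingale‑problem solution on the probability space carrying the white noise $W$ and $P_0$ via the martingale representation theorem produces $(P_t)_{t\in[0,T]}$ solving \eqref{SDE1}; the analogous construction driven by $W$ and $\tilde P_0$ gives $(\tilde P_t)_{t\in[0,T]}$ solving \eqref{SDE2} on the same space.

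For uniqueness I would argue as follows. If $(P_t)$ is any continuous $(\mathcal F_t)$‑adapted solution of \eqref{SDE1}, then It\^o's formula applied to $\varphi(P_t)$, $\varphi\in C^2_b(\R^3)$, followed by taking expectations, shows that $G_t\eqdef\mathrm{Law}(P_t)$ solves the linear relativistic Landau equation \eqref{LinearLandau} with $G_0=F_0$; and \eqref{weakform} says $(F_t)$ is itself a solution of this same linear equation. It therefore suffices to prove that \eqref{LinearLandau} has at most one solution in $C([0,T];\PP(\R^3))$ with prescribed initial datum. Via the superposition principle, every such measure solution lifts to a solution of the time‑inhomogeneous martingale problem for $\mathcal L_{F_s}$ started from $F_0$; and this martingale problem is well posed by Theorem~B.1 of Horowitz--Karandikar \cite{MR1042343} (equivalently Theorem~5.2 of \cite{BK93}), whose hypotheses — $a_{F_s}$ continuous and pointwise positive‑definite, together with a growth bound on $a_{F_s},b_{F_s}$ — are exactly what the estimates $|\Phi(p,q)|\le c(1+q^0|p-q|^{-1})$ and $|B(p,q)|\le c(1+q^0|p-q|^{-2})$ deliver (after a standard time change absorbing the $L^1$‑in‑time factor). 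Hence $G_t=F_t$ for every $t$, i.e. $\mathrm{Law}(P_t)=F_t$, and likewise $\mathrm{Law}(\tilde P_t)=\tilde F_t$; and since well‑posedness of the martingale problem pins down the full law of $(P_t)_t$ (and of $(\tilde P_t)_t$), the solution pair is determined, which proves the proposition.

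I expect the main obstacle to be verifying that the averaged coefficients $a_{F_s},b_{F_s}$ genuinely satisfy the hypotheses of the Horowitz--Karandikar theorem: $\Phi$ carries a first‑order singularity and $B$ an order‑two singularity, both only borderline locally integrable in $\R^3$, so the continuity of $a_{F_s},b_{F_s}$ and — more delicately — their \emph{uniform}‑in‑$p$ boundedness have to be extracted with care, the latter resting essentially on the weighted control $q^0F_s\in L^\infty\cap L^1$ through Lemma~\ref{lemmaSUP}; one must also route the merely $L^1$‑in‑time bound on these coefficients through the uniqueness statement, and confirm the pointwise nondegeneracy of $a_{F_s}(p)$ from the null‑space description in Proposition~\ref{SpecOfPhi}.
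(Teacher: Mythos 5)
Your sketch misreads what Horowitz--Karandikar Theorem~B.1 actually furnishes, and as a result the argument is circular at the key step and omits the part of the proof that is actually the work. Theorem~B.1 (equivalently Theorem~5.2 of \cite{BK93}) takes as a \emph{hypothesis} the well-posedness of the martingale problem for $(C_b^2,\mathcal{A}_t)$ from every starting point $(t_0,x_0)\in[0,T]\times\R^3$ (this is condition~(ii) in the paper's Step~5), and \emph{concludes} that a measure-valued solution of the linear Fokker--Planck equation \eqref{LinearLandau} is uniquely determined by its initial datum. You instead write that ``this martingale problem is well posed by Theorem~B.1 \dots whose hypotheses --- $a_{F_s}$ continuous and pointwise positive-definite, together with a growth bound --- are exactly what the estimates deliver.'' That is not what the theorem says: it does not deduce well-posedness from continuity, pointwise nondegeneracy and a growth bound of $a_{F_s},b_{F_s}$, and no Stroock--Varadhan-type result is cited or proved in your sketch that would give well-posedness from these ingredients (the coefficients here are merely bounded and continuous in $p$, and only $L^1$ in time, which is well below the regularity such results require). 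Condition~(ii) must be established by an independent argument; this is exactly what the paper's Steps~1--4 do.

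Those missing steps also supply something your approach does not deliver at all: \emph{pathwise} uniqueness of the pair $(P_t,\tilde P_t)$ on the prescribed filtered space driven by the single white noise $W$, which is what the statement of Proposition~\ref{PropUniquenessSDEs} asserts and what the coupling in the proof of Proposition~\ref{PropIntegral} requires. The paper proves this by setting up the solution map $X\mapsto\Phi(x_0,X)$, deriving in Step~2 a closed inequality
\[
\mathbb{E}\big[|\Phi(x_0,X)_t-\Phi(x_0,Y)_t|^2\big]
\le \int_0^t C(F_s)\Big\{\Psi\big(\mathbb{E}[|\Phi(x_0,X)_s-\Phi(x_0,Y)_s|^2]\big)+\Psi\big(\mathbb{E}[|X_s-Y_s|^2]\big)\Big\}\ud s
\]
directly from Proposition~\ref{PropEstIntegralSingle}, and closing via the Osgood/Gronwall Lemma~\ref{GronwallPsi} to get pathwise uniqueness (Step~3) and strong existence by Picard iteration (Step~4). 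Your substitute --- superposition principle for the Fokker--Planck flow together with a ``martingale representation'' step --- produces at best a weak solution on some auxiliary probability space and uniqueness in law; it does not produce a solution adapted to the given $(\mathcal{F}_t)$ driven by the given $W$, nor does it show two such solutions on that space coincide almost surely. Only after strong existence/uniqueness is in hand does the paper's Step~5 invoke Horowitz--Karandikar (verifying condition~(ii) precisely from Steps~1--4 applied to an arbitrary $(t_0,x_0)$), and the It\^o-formula identification of $\mathrm{Law}(P_t)$ with a solution of \eqref{LinearLandau} then matches your final paragraph. So the part of your argument that agrees with the paper is the last mile; the Picard/Gronwall core that makes the proposition true is absent, and the Horowitz--Karandikar step as you have stated it is logically backwards.
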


We postpone the proof of Proposition \ref{PropUniquenessSDEs} to later.   We will first   prove Proposition \ref{PropIntegral}, assuming that  Proposition \ref{PropUniquenessSDEs} holds.

\begin{proof}[Proof of Proposition \ref{PropIntegral}]
By Proposition \ref{PropUniquenessSDEs}, $\mbox{Law}(P_t) = F_t$ and $\mbox{Law}(\tilde P_t) = \tilde F_t$, which allows us to conclude that 
\[
\mathcal{W}_2^2(F_t, \tilde F_t) \leq \mathbb{E}(|P_t - \tilde P_t|^2)  \eqdef  \rho(t). 
\]
By the choice of $P_0$ and $\tilde P_0$ as in  \eqref{InitalLaws},  one has  $\rho(0)= \mathcal{W}_2^2(F_0, \tilde F_0)$. Hence, with $\widetilde  C(F_s, \tilde F_s)$ defined as in \eqref{DefCFsTildeFs},  it suffices to show that 
\begin{equation} \label{LocalEstRho}
\rho(t) \leq \rho(0) + \int_0^t \widetilde  C(F_s, \tilde F_s)  \Psi( \rho(s)) \ud s. 
\end{equation}
Since $R_s$ has marginals $F_s$ and $\tilde F_s$, the coupled SDEs \eqref{SDE1} and \eqref{SDE2} can be rewritten as 
\begin{equation*}
%\label{SDE1}
P_t = P_0 + \int_0^t \int_{\R^3 \times \R^3} \Sigma(P_s, p) \,  W(\ud p, \ud \tilde p, \ud s) 
%\\
+ \int_0^t \int_{\R^3 \times \R^3} B(P_s, p) R_s ( \ud p,  \ud \tilde p)  \ud s, 
\end{equation*}
\begin{equation*}
%\label{SDE2}
\tilde P_t = \tilde P_0 + \int_0^t \int_{\R^3 \times \R^3} \Sigma(\tilde P_s, \tilde p) \,  W(\ud p, \ud \tilde p, \ud s) 
%\\
+ \int_0^t \int_{\R^3 \times \R^3} B( \tilde P_s, \tilde p) R_s ( \ud p, \ud \tilde p) \ud s.  
\end{equation*}
Applying It\^o's formula and then taking expectations, one obtains 
\begin{multline*}
\mathbb{E}[|P_t -\tilde P_t|^2]   = \mathbb{E}[|P_0 - \tilde P_0|^2]  
\\
 + \sum_{i, j =1}^3 \int_0^t \int_{\R^3 \times \R^3 } \mathbb{E} \left[ (\Sigma^{ij}(P_s, p) - \Sigma^{ij} (\tilde P_s, \tilde p) )^2 \right] R_s (\ud p, \ud \tilde p) \ud s 
 \\
 + 2 \int_0^t \int_{\R^3 \times \R^3 } \mathbb{E} \left[ (B(P_s, p ) - B(\tilde P_s, \tilde p)) \cdot (P_s - \tilde P_s ) \right] R_s (\ud p, \ud \tilde p) \ud s 
 \\
 = \rho(0) + \int_0^t \mathcal{A}_s \ud s + 2 \int_0^t \mathcal{B}_s \ud s. 
\end{multline*}
Here $\mathcal{A}_s$ and $\mathcal{B}_s$ are defined as the second and third terms in the integrals in the previous step.

Denote by  $Q_s( \ud p, \ud \tilde p)$  the coupled law of the pair $(P_s, \tilde P_s)$. Applying Proposition \ref{EstDoubleB} and the fact that $R_s, Q_s \in \ProbJ(F_s, \tilde F_s)$ we then have
\begin{multline*}
 |\mathcal{B}_s|   \leq \int_{\R^3 \times \R^3} \int_{\R^3 \times \R^3} |p - \tilde p| |B(p, q) - B(\tilde p, \tilde q) | Q_s(\ud p, \ud \tilde p) R_s( \ud q, \ud  \tilde q)   
 \\
  \leq  \widetilde  C(F_s, \tilde F_s)  \Psi\left( \int_{\R^3 \times \R^3} |p - \tilde p|^2 Q_s(\ud p, \ud \tilde p) \right) 
  \\
  + C(F_s, \tilde F_s) 
  \Psi\left( \int_{\R^3 \times \R^3} |q - \tilde q|^2 R_s(\ud q, \ud \tilde q) \right), 
\end{multline*}
where 
\[
\widetilde C(F_s, \tilde F_s) = c \left( \|  F_s \|_{L_3^\infty \cap L^1_3} +  \|  \tilde F_s \|_{L_3^\infty \cap L^1_3}+1 \right), 
\]
and $c>0$ here is a universal constant. 

By the estimate \eqref{EstSigIntegral} in Proposition \ref{PropEstIntegralSingle}, we have
\begin{multline*} 
\mathcal{A}_s  
 = \int_{\R^3 \times \R^3} \int_{\R^3 \times \R^3} |\Sigma(p, q) - \Sigma(\tilde p, \tilde  q)|^2 Q_s(\ud p, \ud \tilde p) R_s(\ud q, \ud \tilde q)  
 \\
 \leq 2  \int_{\R^3 \times \R^3} \int_{\R^3 \times \R^3}  |\Sigma(p, q) - \Sigma(\tilde p,  q)|^2 Q_s(\ud p, \ud \tilde p) R_s(\ud q, \ud \tilde q)   
 \\
 \quad +  2  \int_{\R^3 \times \R^3} \int_{\R^3 \times \R^3}  |\Sigma( \tilde p, q) - \Sigma(\tilde p,  \tilde q)|^2 Q_s(\ud p, \ud \tilde p) R_s(\ud q, \ud \tilde q) 
 \\
  = 2 \int_{\R^3 \times \R^3} Q_s (\ud p, \ud \tilde p) \int_{\R^3 \times \R^3}  |\Sigma(p, q) - \Sigma(\tilde p,  q)|^2 F_s(q) \ud q  
  \\
 \quad + 2  \int_{\R^3 \times \R^3} R_s (\ud q, \ud \tilde q) \int_{\R^3 \times \R^3}  |\Sigma( \tilde p, q) - \Sigma(\tilde p,  \tilde q)|^2 \tilde F_s(\tilde p) \ud \tilde p  
 \\
 \leq C(F_s)  \int_{\R^3 \times \R^3} \Psi(|p - \tilde p|^2) Q_s (\ud p, \ud \tilde p) 
 \\
 + C( \tilde F_s) \int_{\R^3 \times \R^3} \Psi(|q- \tilde q|^2) R_s (\ud q, \ud \tilde q),
\end{multline*} 
where we recall that 
\[
C(F_s) = c \|  F_s(q) \|_{L^\infty_7 \cap L^1_7},  \quad C(\tilde F_s) = c \| \tilde F_s(q) \|_{L^\infty_7 \cap L^1_7}.  
\]
Using Jensen's inequality \eqref{Jensen}, we have that
\begin{multline*} 
\mathcal{A}_s \leq \widetilde C(F_s, \tilde F_s)   \Psi\left( \int_{\R^3 \times \R^3} |p - \tilde p|^2 Q_s(\ud p, \ud \tilde p) \right) 
\\
+ C(F_s, \tilde F_s)\Psi\left( \int_{\R^3 \times \R^3} |q - \tilde q|^2 R_s(\ud q, \ud \tilde q) \right), 
\end{multline*} 
but now $\widetilde C(F_s, \tilde F_s)$ is given in \eqref{DefCFsTildeFs}.

Recalling  the definition of $\rho(s)$, we have 
\[
 \mathcal{W}_2^2(F_s, \tilde F_s) = \int_{\R^3 \times \R^3 } |q - \tilde q|^2 R_s( \ud q, \ud \tilde q)  \leq  \int_{\R^3 \times \R^3 } |q - \tilde q|^2 Q_s( \ud q, \ud \tilde q) = \rho(s). 
\]
Since $\Psi$ is increasing, we finally obtain \eqref{LocalEstRho}. We remark that as long as $F_s$ and $\tilde F_s$ have finite 2nd moments, then $\rho(s)$ is bounded. Indeed, 
\[
\rho(s) =  \int_{\R^3 \times \R^3 } |q - \tilde q|^2 Q_s( \ud q, \ud \tilde q) \leq  2 \int_{\mathbb{R}^3} |q|^2 F_s(\ud q) +  2 \int_{\R^3} |\tilde q |^2 \tilde F_s (\ud \tilde q)  < \infty. 
\]
This completes the proof. 
\end{proof}

We will now proceed to prove Proposition \ref{PropUniquenessSDEs}.

\begin{proof}[Proof of Proposition \ref{PropUniquenessSDEs}]
We only need to check the results for  the SDE  \eqref{SDE1}, while \eqref{SDE2} can be treated similarly. We follow the standard scheme of proof as in \cite{MR2718931}. \\

\noindent {\em Step 1.} For fixed $x_0 \in \R^3$ and a prior known $\R^3-$valued progressively measurable process $X= (X_t)_{t \in [0, T]}$, we define the $\R^3-$valued progressively measurable process $(\Phi(x_0, X)_t)_{t \in [0, T]}$ as 
\begin{equation*} 
\Phi(x_0, X)_t = x_0 + \int_0^t \int_{\R^3 \times \R^3} \Sigma(X_s, p) W(\ud p, \ud \tilde p, \ud s) 
%\\
+ \int_0^t \int_{\R^3} B(X_s, p) F_s(p)  \ud p \ud s. 
\end{equation*} 
We claim that $(\Phi(x_0, X)_t)_{t \in [0, T]}$ is a.s. continuous and 
\begin{multline}  \label{MeanSquareEst}
\mathbb{E}\left[ \sup_{t \in [0, T]} |\Phi(x_0, X)_t|^2 \right] \lesssim
 |x_0|^2 
 + \int_0^T C(F_s) \ud s + \left(\int_0^T C(F_s) \ud s \right)^2 < \infty,
\end{multline}
where $C(F_s) = c \| (q^0)^3 F_s(q)\|_{L^\infty_q \cap L^1_q}$ is uniformly bounded in time $s \in [0, T]$. 
Indeed, following the proof of \eqref{TriSig} in Proposition \ref{PropSig}, we obtain 
\[
|\Sigma(p, q)|^2 \leq c \left( (q^0)^3 |p-q|^{-1} \IN_A(p, q) + (q^0)^2 \IN_{A^c}(p, q) \right). 
\]
Therefore a.s. we have
\begin{equation}\label{locSigMoments}
\begin{split}
 &  \int_{\R^3 } |\Sigma(X_s, p)|^2 R_s( \ud p, \ud \tilde p) \leq \sup_{p \in \R^3} \int_{\R^3} |\Sigma(p, q)| F_s( \ud q)   \\
 &  \leq \sup_{p \in \mathbb{R}^3} \bigg[ \int_{\R^3} |p-q|^{-1} \left(  (q^0)^3 F_s (q)  \right) \ud q  + \int_{\R^3} (q^0)^2 F_s (q) \ud q   \bigg] 
 \\
 & \leq C(F_s)  < \infty,
\end{split}   
\end{equation}
noting that  the first marginal of $R_s$ is $F_s$ and using \eqref{LemIntI} with $\alpha = -1$. Similarly, using \eqref{LemIntI} with $\alpha = -2$ and following the proof of Proposition \ref{PropEstB}, we obtain
\[
|B(p, q) | \leq c \left( q^0 |p-q|^{-2} \IN_A(p, q) + \IN_{A^c} (p, q) \right),  
\]
we obtain a.s. 
\begin{equation}\label{locBMoments}
\int_{\R^3} |B(X_s, p)|F_s(p) \ud p  \leq \sup_{p \in \R^3 }  \int_{\R^3} |B(p, q) | F_s (q) \ud q \leq C(F_s) < \infty. 
\end{equation}
Combining \eqref{locSigMoments} and \eqref{locBMoments}, we can obtain that $(\Phi(x_0, X)_t)_{t \in [0, T]}$ is a.s. continuous by Kolmogorov's continuity theorem. The above mean square estimate \eqref{MeanSquareEst} can be easily deduced from the Doob's martingale inequality, in particular we have that
\[
\mathbb{E}[\sup_{ t \in [0, T]} |\Phi(x_0, X)_t|^2] \leq 4 \mathbb{E} [|(\Phi(x_0, X)_T|^2 ]. 
\]

\smallskip 
\noindent {\em Step 2.} We now prove the uniqueness of the process $((\Phi(x_0, X)_t)_{t \in [0, T]}$ given $(X_t)_{t \in [0, T]}$. Let $(X_t)_{t \in [0, T]}$ and $(Y_t)_{t \in [0, T]}$ be two progressively measurable processes, we want to establish that 
\begin{multline}\label{Step2Eq}
 \Delta_t   \eqdef  \mathbb{E}[|\Phi(x_0, X)_t - \Phi(x_0, Y)_t|^2]  
 \\
 \leq   \int_0^t C(F_s) \Big\{  \Psi\Big( \mathbb{E}[ |\Phi(x_0, X)_s - \Phi(x_0, Y)_s |^2]\Big) + \Psi\Big(  \mathbb{E}[|X_s - Y_s|^2]\Big) \Big\},
\end{multline}
where 
$
C(F_s) = c \,  \| F_s  \|_{L^\infty_7 \cap L^1_7}. 
$

Indeed, using It\^o's formula and then taking expectation, 
\begin{multline*}
\Delta_t   = 
\sum_{i, j=1}^3 \int_0^t \int_{\R^3 \times \R^3} \mathbb{E} \left[ (\Sigma^{ij}(X_s, p) - \Sigma^{ij}(Y_s, p))^2 \right] R_s( \ud p, \ud \tilde p) \ud s 
\\
 + 2 \int_0^t \int_{\R^3 \times \R^3} \mathbb{E} \left[ \Big(B(X_s, p) - B(Y_s, p) \Big) \cdot \Big(\Phi(x_0, X)_s - \Phi(x_0, Y)_s \Big) \right] 
 \\
 \times F_s(p) \ud p \ud s.  
\end{multline*}
Since $R_s$ has the first marginal $F_s$, applying Proposition \ref{PropEstIntegralSingle}, we have 
\[
\begin{split}
\Delta_t & \leq c \int_0^t  \mathbb{E} \bigg[ \int_{\R^3} |\Sigma(X_s, p) - \Sigma(Y_s, p)|^2 F_s(p) \ud p  \bigg] \ud s \\
&  + c \int_0^t \mathbb{E} \bigg[ \Big|\Phi(x_0, X)_s - \Phi(x_0, Y)_s\Big| \cdot \int_{\R^3} |B(X_s, p) - B(Y_s, p)| F_s(p) \ud p  \bigg] \ud s \\
& \leq  \int_0^t C(F_s) \mathbb{E} \Big[  \Psi(|X_s - Y_s|^2) + |\Phi(x_0, X)_s - \Phi(x_0, Y)_s| \Psi(|X_s - Y_s|) \Big] \ud s. 
\end{split}
\]
Since $\Psi$ is increasing and $x \Psi (x) \leq \Psi(x^2)$ for any $x \geq 0$, one has for any $x, y \geq 0$, 
\[
x \Psi(y) \leq \IN_{ \{ x \leq y\} } y \Psi(y) + \IN_{ \{ x \geq y \} } x \Psi(x)  \leq \Psi(x^2) + \Psi(y^2). 
\]
We thus have 
\[
\Delta_t \leq  \int_0^t C(F_s) \mathbb{E} \Big[  \Psi(|X_s - Y_s|^2) + \Psi\left( |\Phi(x_0, X)_s - \Phi(x_0, Y)_s|^2 \right) \Big] \ud s.
\]
Applying Jensen's inequality \eqref{Jensen} to two terms  $\mathbb{E}[\Psi(\cdot )]$, we finally obtain the inequality in \eqref{Step2Eq}. 

\smallskip

\noindent {\em Step 3. } We now check the uniqueness of \eqref{SDE1}. Consider two solutions $P = \Phi(P_0, P)$ and $\tilde P = \Phi(P_0, \tilde P)$ and let $\rho(t) = \mathbb{E}(|P_t - \tilde P_t |^2 )$. By Step 1, $\rho(t)$ is bounded on $[0, T]$. By Step 2, one has 
\[
\rho(t) \leq \int_0^t \gamma(s) \Psi(\rho(s)) \ud s, 
\]
where $\gamma(s) =  C(F_s) \in L^\infty([0, T])$. Lemma \ref{GronwallPsi} yields that $\rho(t) =0$. This means for any $t \in [0, T]$,  a.s. $P_t = \tilde P_t$. The continuity of $(\Phi(x_0, X)_t)_{t \in [0, T]}$ forces that a.s. $(P_t)_{t \in [0, T]} = (\tilde P_t)_{t \in [0, T]}$.   

\smallskip 
\noindent {\em Step 4.}  We now prove the existence of a solution to \eqref{SDE1} using Picard iteration. Define  $P^0$ by $P^0_t \equiv P_0$ and then by induction $P^{n+1}_t  = \Phi(P_0, P^n_t) $. Set $\rho_{n, k}(t) = \sup_{s \in [0, t]} \mathbb{E}[|P_s^{n+k} - P_s^n|^2]$. Again by Step 1, we have that $\sup_{n, k} \|\rho_{n, k}(t) \|_{L^\infty[0, T]} < \infty$. 
Then by Step 2, 
\[
\rho_{n+1, k}(t) \leq \int_0^t \gamma(s) \left[ \Psi(\rho_{n+1, k}(s)) + \Psi(\rho_{n, k}(s) ) \right] \ud s, 
\]
where $\gamma(s) = C(F_s) \in L^\infty([0, T])$.  We define $\rho_n(t)  \eqdef  \sup_k \rho_{n, k} (t)$. Since  $\Psi$ is increasing, 
\[
\rho_{n+1}(t) \leq \int_0^t \gamma(s) \left[ \Psi(\rho_{n+1}(s)) + \Psi(\rho_{n}(s) ) \right] \ud s. 
\]
Finally, set $\rho(t)  \eqdef  \limsup_n \rho_n(t)$. By (reverse) Fatou's lemma, 
one has 
\[
\rho(t) \leq 2 \int_0^t \gamma(s) \Psi(\rho(s)) \ud s. 
\]
Then Lemma \ref{GronwallPsi} guarantees that $\rho(t) \equiv 0$ for all $t \in [0, T]$. We obtain
\[
\limsup_n \sup_k \sup_{t \in [0, T]} \mathbb{E} [|P_t^{n+k} - P_t^n|^2] =0. 
\]
This means that the sequence $(P_t^n)_{t \in [0, T]}$ is a Cauchy sequence in the space $L^\infty([0, T], L^2(\Omega))$. Thus there exists a process $(P_t)_{t \in [0, T]}$ such that 
\[
\lim_n \sup_{t \in [0, T]} \mathbb{E} [|P_t - P_t^n |^2] =0. 
\]
To conclude this step, it suffices to prove that 
\[
\kappa_n(t)  \eqdef  \mathbb{E}[|\Phi(P_0, P^n)_t - \Phi(P_0, P)_t|^2] \to 0, \quad 
\]
as $n \to \infty$. Combining with  $(P^{n+1}_t)_{t \in [0, T]} = (\Phi(P_0, P^n)_t)_{t \in [0, T]}$ converges to $(P_t)_{t \in [0, T]}$ in $L^\infty([0, T], L^2(\Omega))$, we can thus conclude $P = \Phi(P_0, P)$. The a.s. continuity of $P$ will follow from Step 1. 

We define $\eps_n \eqdef  \sup_{t \in [0, T]} \mathbb{E}[\Psi(|P_t^n - P_t|^2 )]$, which tends to $0$ using Jensen's inequality \eqref{Jensen}. Applying the estimate \eqref{Step2Eq} in Step 2 again, one obtains, for $t \in [0, T]$, 
\[
\kappa_n(t) \leq \int_0^t \gamma(s) \left( \eps_n + \Psi(\kappa_n(s) ) \right)  \ud s, 
\] 
where again $\gamma(s) = C(F_s) \in L^\infty([0, T])$. Similarly, we consider $\kappa(t) =\limsup_n \kappa_n(t).$ Using (reverse) Fatou's lemma again, we obtain 
\[
\kappa(t) \leq \int_0^t \gamma(s) \Psi(\kappa(s)) \ud s. 
\]
Then Lemma \ref{GronwallPsi} yields that $\kappa(t) \equiv 0$ for any $t \in [0, T]$. We have proved that $\lim_n \kappa_n(t) =0$, which concludes this step.

\smallskip
\noindent {\em Step 5. } It remains to check that for any $t \in [0, T]$, $\mbox{Law}(P_t) = F_t$, where $P=(P_t)_{t \in [0, T]}$ is the unique solution to \eqref{SDE1} or 
\begin{equation*}
%\label{SDE1}
P_t = P_0 + \int_0^t \int_{\R^3 \times \R^3} \Sigma(P_s, p) \,  W(\ud p, \ud \tilde p, \ud s) + \int_0^t \int_{\R^3} B(P_s, p) F_s (p) \ud p \ud s. 
\end{equation*}
Set $G_s = \mbox{Law}(P_s)$ for any $s \in [0, T]$. We note that $(G_t)_{t \in [0, T]}$ solves the linear relativistic Landau equation, i.e. for any test function $\varphi \in C_b^2(\R^3)$, 
\begin{equation}\label{LinearLandau}
\int_{\R^3} \varphi(p) G_t(\ud p) = \int_{\R^3} \varphi(p) F_0(\ud p) 
%\\
+ \int_0^t \int_{\R^3 \times \R^3} L \varphi(p, q) G_s (\ud p) F_s(q) \ud q \ud s,   
\end{equation}
where $L$ is defined in \eqref{LDef}. Indeed, we apply It\^o's formula to $\ud \varphi(P_t)$, 
\[
\begin{split}
\varphi(P_t) & = \varphi(P_0) + \int_0^t \int_{\R^3 \times \R^3} \sum_{i, j=1}^3 \partial_i \varphi(P_s) \Sigma^{ij}(P_s, p) W_j(\ud p, \ud \tilde p, \ud s)    \\
& + \int_0^t \int_{\R^3} \sum_{i=1}^3 \partial_i \varphi(P_s) B_i(P_s, p) F_s(p) \ud p \ud s \\
& + \frac{1}{2} \int_0^t \int_{\R^3 \times \R^3} \sum_{i, j, k=1}^3 \partial_{ij } \varphi(P_s) \Sigma^{ik}(P_s, p) \Sigma^{jk} (P_s, p) R_s(\ud p, \ud \tilde p) \ud s. 
 \end{split}
\]
Taking expectations (which makes the first integral vanish), and noting that $\mbox{Law}(P_s) = G_s$,  $\mbox{Law}(P_0) = F_0$ and $\Phi= \Sigma\,  \Sigma^\top$ or $\Phi^{ij} = \sum_k \Sigma^{ik}\Sigma^{jk}$, one reaches the conclusion. 

For the moment we will assume that we have uniqueness for the linear Landau equation. Since $(F_t)_{t \in [0, T]}$ is a weak solution to the relativistic Landau \eqref{RelLandau}, it is of course a weak solution to the linear Landau \eqref{LinearLandau}. By the uniqueness of the linear Landau, one concludes that $F_t = G_t$ for all $t \in [0, T]$. 

It remains to show the uniqueness  for the linear Landau \eqref{LinearLandau}. We apply Theorem B.1 in Horowitz-Karandikar \cite{MR1042343}, see also Theorem 5.2 in Bath-Karandikar \cite{BK93}. Consider, for $t \in [0, T]$, $p \in \R^3$ and $\varphi \in C_b^2$, the following operator 
\[
\mathcal{A}_t \varphi(p)  \eqdef  \int_{\R^3} L \varphi(p, q) F_t( q) \ud q. 
\]
A stochastic process $(X_t)_{t \in [t_0, T]}$ is said to solve the martingale problem for $(C_b^2, \mathcal{A}_t)$ if for all $\varphi \in C_b^2$, the process $\varphi(X_t) - \int_{t_0}^t \mathcal{A}_s \varphi(X_s) \ud s$, defined for  $t \in [t_0, T]$, is a martingale. To apply Theorem B.1 in \cite{MR1042343}, we only need to check that 
\begin{enumerate}
\item[(i)] there is a countable family $(\varphi_k)_{k \geq 1} \subset C_b^2$ such that for all $t \in [0, T]$, $\{ (\varphi_k, \mathcal{A}_t \varphi_k)\}_{ k \geq 1}$  is dense in $\{ (\varphi, \mathcal{A}_t \varphi) \vert  \varphi \in C_b^2  \}$ with respect to bounded-pointwise convergence; 

\item[(ii)] for any $(t_0, x_0)$ in $[0, T] \times \mathbb{R}^3$, there exists a unique (in law) solution $(X_t)_{t \in [t_0, T]}$ to the martingale problem for $(C_b^2, \mathcal{A}_t)$ such that $X_{t_0} = x_0$. 
\end{enumerate}
We now check these two points. First choose a countable family of functions $(\varphi_k)_{k \geq 1} \subset C_b^2$, dense in $C_b^2$, endowed with the norm $\|\varphi \|_{C_b^2}  \eqdef  \| \varphi\|_{L^\infty} + \|D \varphi \|_{L^\infty} + \|D^2 \varphi \|_{L^\infty}$. Note that 
\[
\begin{split}
 & |\mathcal{A}_t \varphi(p)| \leq \int_{\mathbb{R}^3} |L \varphi(p, q)| F_t(q) \ud q  \\
 & \leq \frac{1}{2} \sum_{i,j=1}^3 \int_{\R^3} |\Phi^{ij} (p, q)| |\partial_{ij}^2 \varphi(p)| F_t(q) \ud q + \sum_{i=1}^3 \int_{\R^3} |B_i(p, q) | |\partial_i \varphi(p) | F_t(q) \ud q. 
 \end{split}
\]
Recall Lemma \ref{lem:EstOfPhi}  and Lemma \ref{lem:BTrivial}, in particular we have
\[
|\Phi(p, q) | \leq c\left( 1 + q^0 |p-q|^{-1} \right), \quad |B(p, q)| \leq c \left( 1+ q^0 |p-q|^{-2}   \right).  
\]
Hence
\[
|\mathcal{A}_t \varphi(p)| \leq c \| \varphi \|_{C_b^2} \left(  1 + \|  F_t \|_{L^\infty_1 \cap L^1_1} \right)
\] 
by \eqref{LemIntI} with $\alpha=-1$ and $\alpha=-2$. This implies that $\mathcal{A}_t \varphi_k$ converges uniformly (stronger than  the bounded-pointwise convergence) to a certain $\mathcal{A}_t \varphi$. 

To prove (ii), we observe that the martingale problem for $(C_b^2, \mathcal{A}_t)$ with $X_{t_0} = x_0$ corresponds to the following SDE
\[
X_t = x_0 + \int_{t_0}^t \int_{\R^3 \times \R^3} \Sigma(X_s, p) W(\ud p, \ud \tilde p, \ud s) + \int_{t_0}^t \int_{\R^3} B(X_s, p) F_s(\ud p) \ud s. 
\]
From the previous Step 1 to Step 4, we have proved the strong existence and uniqueness but only in the case $t_0=0$ and $x_0 = P_0$. The generalization to the above case is straightforward. We have  thus proved point (ii). 

This completes the proof of Proposition \ref{PropUniquenessSDEs}. 
\end{proof}

\section{Proof of the main theorem}\label{proof.main.sec}

We now prove our Main Theorem \ref{MainTheorem}.  The proof will make crucial use of Proposition \ref{PropIntegral} and Lemma \ref{GronwallPsi}. 

\begin{proof}[Proof of Theorem \ref{MainTheorem}] (i) Assume that  $(F_t)_{t \in [0, T]}$ and $(\tilde F_t)_{t \in [0, T]}$ are two weak solutions  to the relativistic Landau Eq. \eqref{RelLandau} with 
\[
\int_0^T   \|  F_s\|_{L^\infty_7 \cap L^1_7} \ud s < \infty,  
\quad 
\int_0^T   \|  \tilde F_s\|_{L^\infty_7 \cap L^1_7} \ud s < \infty. 
\]
Note that the existence of weak solutions is proven in \cite{StrainTas}; however \cite{StrainTas} does not obtain $L^\infty$ bounds on the solution.

By Proposition \ref{PropIntegral},  there exists  a bounded function $\rho: [0, T]\mapsto [0, \infty)$, such that for any $t \in [0, T]$, 
\[
\mathcal{W}_2^2 (F_t, \tilde F_t) \leq \rho(t), \quad \rho(t) \leq \mathcal{W}_2^2(F_0, \tilde F_0) +  \int_0^t \gamma(s)   \Psi(\rho(s)) \ud s, 
\]
where   
\begin{equation*} 
\gamma(s) = C(F_s, \tilde F_s)= c \left(  \|  F_s(q)\|_{L^\infty_7 \cap L^1_7} +  \|  \tilde  F_s(q)\|_{L^\infty_7 \cap L^1_7}  \right) \in L^1([0, T]).  
\end{equation*}
By Lemma \ref{GronwallPsi}, if initially $\rho(0) = \mathcal{W}_2^2(F_0, \tilde F_0) =0$, then for any $t \in [0, T]$, $\rho(t)=0$ and thus $\mathcal{W}_2(F_t, \tilde F_t) =0$. Thus $(F_t)_{t \in [0, T]} = (\tilde F_t)_{t \in [0, T]}. $

(ii) Consider a family of  weak solutions $(F_t)_{t \in [0, T]}$ and $(F_t^n)_{t \in [0, T]}$ to \eqref{RelLandau} such that 
\[
\sup_n \int_0^T    \left( \|  F_s\|_{L^\infty_7 \cap L^1_7} +   \|  F_s^n \|_{L^\infty_7 \cap L^1_7}  \right) \ud s < \infty, 
\]
and $\rho_n(0)  \eqdef  \mathcal{W}_2^2(F_0, F_0^n) \to 0$ as $n \to 0$. Then applying Proposition \ref{PropIntegral} again, one has a family of bounded functions $\rho_n: [0, T] \mapsto [0, \infty)$, such that 
\[
\mathcal{W}_2^2 (F_t, F_t^n) \leq \rho_n(t), \quad \rho_n(t) \leq \rho_n(0) + \int_0^t \gamma_n(s) \Psi(\rho_n(s)) \ud s. 
\]
Lemma \ref{GronwallPsi} part ii) implies that 
\[
\int_{\rho_n(0)}^{\rho_n(t) } \frac{1}{\Psi(y)} \ud y \leq \int_0^t \gamma_n(s) \ud s, \quad \mbox{for any}\  t \in [0, T]. 
\]
Since for any $\eps> 0$, $\int_0^\eps 1/\Psi(y) \ud y = + \infty$ and $ \sup_n \int_0^T \gamma_n(s) \ud s < \infty$ and $\rho_n(0) \to 0$ as $n \to \infty$, one finally obtains that $\lim_n \sup_{t \in [0, T]}  \rho_n(t)  =0$ and then consequently we have that $\lim_n \sup_{t \in [0, T]} \mathcal{W}_2^2(F_t, F_t^n) =0. $
 \end{proof}

%%%%%%%%%%%%%%%%%%%%%%%%%%%%%%%%%%%%%%%%%%%%%%%%%%%%%%%%%%%%%%%%%%%%%%%%%%%%%%%%%% 

%%% copy and paste the BBL file below here

\providecommand{\MR}[1]{}
\providecommand{\bysame}{\leavevmode\hbox to3em{\hrulefill}\thinspace}
\providecommand{\MR}{\relax\ifhmode\unskip\space\fi MR }
% \MRhref is called by the amsart/book/proc definition of \MR.
\providecommand{\MRhref}[2]{%
  \href{http://www.ams.org/mathscinet-getitem?mr=#1}{#2}
}
\providecommand{\href}[2]{#2}

\end{document}